\newtheorem{definition}{Definition}[section] 
\newtheorem{lemma}[definition]{Lemma}
\newtheorem{example}[definition]{Example}
\newtheorem{theorem}[definition]{Theorem}
\newtheorem{corollary}[definition]{Corollary}
\newtheorem{remark}[definition]{Remark}
\newtheorem{conjecture}[definition]{Conjecture}
\begin{document}
\title{Center and Radius of a subset of metric space}
\author[A. Badra]{Akhilesh Badra}
\address{AKHILESH BADRA; Department of Mathematics, University of Delhi,
	Delhi--110 007, India}
\email{akhileshbadra028@gmail.com}
\author[H. K. Singh]{Hemant kumar Singh}
\address{HEMANT KUMAR SINGH; Department of Mathematics, University of Delhi,
	Delhi--110 007, India}
\email{hemantksingh@maths.du.ac.in}
\thanks{The first author is supported by research grant from the Council of Scientific and Industrial Research (CSIR), Ministry of Science and Technology, Government of India with reference number: 09/0045(13774)/2022-EMR-I}
\subjclass[2020]{Primary 54E35; Secondary 54E99}
\keywords{Metric space, Path metric space, Largest open ball, Point-set Topology}

\date{}

\maketitle

\begin{abstract}In this paper,
we introduce a notion of the center and radius of a subset $A$ of metric space $X$. In the Euclidean spaces, this notion can be seen as the extension of the center and radius of open/closed balls. 
 The center and radius of a finite product of subsets of metric spaces, and a finite union of subsets of a metric space are also determined.\par 
 For any  subset $A$ of metric space $X,$ there is a natural question to identify the open balls of  $X$ with the largest radius that are entirely contained in $A$. To answer this question, we introduce a notion of quasi-center and quasi-radius of a subset $A$ of  metric space $X$. We prove that the center of the largest open balls contained in $A$ belongs to the quasi-center of $A$, and its radius is equal to the quasi-radius of $A$. In particular, for the Euclidean spaces, we see that the center of largest open balls contained in $A$ belongs to the center of $A$, and its radius is equal to the radius of $A$.
\end{abstract}

\section{\textbf{Introduction}}
To extend classical geometric ideas beyond the Euclidean spaces, Maurice Fréchet introduced a notion of distance to more abstract setting in 1906. In his doctoral dissertation, he defined the distance between any two points within a given set \cite{james}.  
 A set along with this distance notion is called metric space, and this name was given by Hausdorff in 1914.
In a metric space, the concept of distance is axiomatized, and allowing us for the study of convergence, continuity, compactness etc. which are essential in many areas of pure and  applied mathematics.\par
Understanding the geometric properties of subsets within metric spaces is a fundamental aspect of mathematical analysis and topology. The notion of open/closed balls provide essential tools for analyzing properties of metric spaces. Every ball is defined by its center and radius. The center of ball is in the middle of ball, from which the radius is measured. The purpose of the center of a ball is to act as the fixed reference from which all other points on the boundary of the ball are equidistant. It is natural to question whether this concept can be extended to any arbitrary subset of a metric space. Are there any points whithin a subset of a metric space that are equidistant from its boundary $?$ However, extending this notion of center and radius to a subset of metric spaces has significant challenges and opens new avenues for exploration.\par

 This paper addresses these questions by introducing a novel framework for defining the center and radius of subsets within arbitrary metric spaces, 
 in a way that parallels to the idea of center and radius of balls in the Euclidean spaces. This definition not only preserves the intuitive geometric interpretation but also extends it to abstract settings. We further explore the properties of the center and radius in the context of finite products of subsets of metric spaces and a finite union of subsets within a metric space. \par
A central question addressed in this paper is the identification of the largest open balls that are entirely contained within a given subset $A$ of metric space 
$X$. To tackle this, we introduce a notion of quasi-center and quasi-radius. These concepts serve as pivotal tools in characterizing the maximal open balls contained in $A$, and we demonstrate that the center of these balls is located within the quasi-center of 
$A$, with their radius equating to the quasi-radius of 
$A$. Notably, when applied to the Euclidean spaces, our results show that the center and radius of the largest open balls contained in $A$ correspond directly to the center and radius of $A$ itself.\par
This paper is structured as follows: Section 2 consists of notations, terminology and basics of metric spaces that are used in this paper. Section 3 provides a detailed definition of the center and radius of a subset within a metric space, along with key properties and examples. Section 4 and section 5 extends these concepts to finite products and finite unions of subsets of metric space, respectively. In Section 6, we introduce the notions of quasi-center and quasi-radius, leading to a key result that connects these concepts to largest open balls contained within a subset. Finally, Section 7 discusses a conjecture about a relationship between the rank of different dimensional homology groups of a subset $A$ of metric space $X$, and outlines potential directions for future research.
\section{\textbf{Notations, Terminology and Basics}}
Let $(X,d_{X})$ be a metric space. The distance between two subsets $A$ and $B$ of $X$ is $d_{X}(A,B)=\inf\{d_{X}(a,b)|a\in A, b\in B\}.$ In many results of this paper, a subset is required to be nonclopen, which means that the subset is not both open and closed. The interior, closure and boundary of $A$ are denoted by $A^\circ, \overline{A}$ and $\partial_{X}(A)$, respectively. For details about metric spaces, we refer \cite{Robert}.\par
Now, we recall some basic results and properties of metric spaces.
\begin{itemize}
	\item Let $f:X\longrightarrow Y$ be an isometry between two metric spaces $X$ and $Y.$ For a subset $A$ of $X$, $f(\partial_{X}(A))=\partial_{Y}(f(A)).$ 
	\item For a subset $A$ of  metric space $X$,  $\partial_{X}(A^\circ)\subseteq\partial_{X}(A)$ and  $\partial_{X}(\overline{A})\subseteq\partial_{X}(A).$ 
	\item If $A\subseteq B\subseteq X$ then  $d_{X}(a,B)\leq d_{X}(a,A),\forall a\in X.$
	\item Let $A$ and $B$ be subsets of metric spaces $X$ and $Y$, respectively. Then $\partial_{X\times Y}(A\times B)= (\overline{A}\times \partial_{Y}(B)\cup(\partial_{X}(A)\times \overline{B})$. Similarly, for subsets $A_i$ of metric spaces $X_i$, we have $\partial_{\prod\limits_{1}^{n}X_i}(\prod\limits_{1}^{n}A_i)=\bigcup\limits_{1}^{n}(\overline{A}_1\times \overline{A}_2\times...\times\partial_{X_i}(A_i)\times...\times \overline{A}_k)$.
	\item For  separated subsets $A$ and $B$ of a metric space $X$, we have
	$ \partial_{X}(A \cup B)= \partial_{X}(A) \cup \partial_{X}(B)$.
	\item Let $(X,d_X)$ be a path metric space \cite{Gro99}. For a proper subset $A$ of $X$,  $d_{X}(a,\partial_{X}(A))= d_{X}(a,A^c),\forall a\in A$.
\end{itemize}

\section{\textbf{The Definition of Center and Radius}}

 Let $(X,d_{X})$ be a metric space and $A$ be a subset of X. We introduce the notion of center and radius of A in X:
 
\begin{definition}[Center of a subset]
    The center of subset $A$ of a metric space $X$ is the set
$ \{a \in A | d_{X}(a,\partial_{X}(A)) \geq d_{X}(b,\partial_{X}(A)), \forall\, b \in A\}$, where $\partial_{X}(A)$ is the boundary of $A$ in $X$. We denote the center of $A$ in $X$ by $Cent_{X}(A).$

\end{definition}

 Thus the center of A is the set of all those elements of $A$ which are at the maximum distance from the boundary of $A$.

\begin{definition}[Radius of a subset]
    The radius of subset $A$ of a metric space $X$ is the distance between the center of $A$ and the boundary of $A$.  We denote the radius of A in X by $rad_{X}(A).$ 
\end{definition}

    It is clear that for every point in $Cent_{X}(A)$ has the same distance from $\partial_{X}(A)$. Thus $rad_{X}(A)= d_{X}(Cent_{X}(A),\partial_{X}(A)) = d_{X}(a, \partial_{X}(A)) , \forall a\in Cent_{X}(A)$.

\begin{example}\label{2.3}
{\normalfont     Let $\mathbb{R}$ be the set of real numbers with usual metric. For subsets $A=\left[0,1\right], B=\left[0,1\right] \cup \left[2,3\right]$ and $C=\left[0,1\right] \cup \left[5,10\right]$, we get $Cent_{\mathbb{R}}$(A)$ =\{0.5\}$ $\&$ $rad_{\mathbb{R}}(A) = 0.5$, $Cent_{\mathbb{R}}$(B)$ =\{0.5,2.5\}$ $\&$ $rad_{\mathbb{R}}(B) = 0.5$ and $Cent_{\mathbb{R}}$(C)$ =\{7.5\}$ $\&$ $rad_{\mathbb{R}}(C)= 2.5$.}
\end{example}

\begin{example}\label{2.4}
     {\normalfont Let $\mathbb{R}^{2}$ be the real plane with the Euclidean metric.  For the unit disc $\mathbb{D}^{2} \subseteq \mathbb{R}^{2}$,  $Cent_{\mathbb{R}^{2}}(\mathbb{D}^{2}) =\{\left(0,0\right)\}$ $\&$ $rad_{\mathbb{R}^{2}}(\mathbb{D}^{2}) = 1,$ and 
    for a punctured unit disc $A= \mathbb{D}^{2}\backslash\{(0,0)\} \subseteq \mathbb{R}^{2}$, $Cent_{\mathbb{R}^{2}}(A) =\{\left(x,y\right)\in A|x^{2}+y^{2}=(\frac{1}{2})^{2}\}$ $\&$ $rad_{\mathbb{R}^{2}}(A) = \frac{1}{2}.$}
\end{example}

\begin{example}\label{3.5}
	{\normalfont 
		The $n + 1$ vertices of the standard $n$-simplex $\Delta^n$ are the points $e_i$, $1\leq i \leq n+1,$ in the Euclidean space $\mathbb{R}^{n+1}$ whose $i$-th coordinate is 1 and all other coordinates are 0. The simplex $\Delta^n$ lies in the affine hyperplane $H^{n}\subseteq \mathbb{R}^{n+1}$ spanned by its vertices $e_i$.
		The center of $Cent_{H^n}(\Delta^n)$ is the barycenter  $\frac{1}{n+1}(1,1,...,1)$ of $\Delta^n$.}
\end{example}

 As the center of A  consists of all those points of A which are at the maximum distance from its boundary, $rad_{X}(A)$ is the maximum distance of any point $a \in A$ from its boundary. It is clear that if $Cent_X(A)\neq \emptyset$ then $rad_{X}(A) =
\sup_{a \in A}d_{X}(a,\partial_{X}(A))
$. And, if  $Cent_X(A) = \emptyset$ then $rad_{X}(A)=\infty$ but $\sup_{a \in A}d_{X}(a,\partial_{X}(A))$ could be finite.\par 
 It leads us to introduce the concept of the Semi-radius of a subset $A$ in the metric space $X$.

\begin{definition}[Semi-radius of a subset]
	 The Semi-radius of subset $A$ of a metric space $X$ is the supremum of the set that consists of distance of any point $a\in A$ from the boundary of $A$. We denote the Semi-radius of A in X by $Srad_{X}(A).$\par That is, $Srad_{X}(A)= \sup_{a \in A}d_{X}(a,\partial_{X}(A))$.
\end{definition}

Note that $rad_{X}(A) \geq Srad_{X}(A)$. We can notice it from the following examples.
 
 \begin{example}\label{3.6.}
 	{\normalfont Let $X=\mathbb{R}\backslash\{1\}$ be the  metric subspace of Euclidean line $\mathbb{R}$. Let $A=[0,2]\backslash\{1\}$ be a subset of $X.$ Then $\partial_{X}(A)=\{0,2\}$. If $Cent_{X}(A)\neq \emptyset$, then 
 		$\exists$ $a \in A$ such that $d_{X}(a,\partial_{X}(A)) \geq d_{X}(x,\partial_{X}(A)) ,\forall x\in A$, which is not true. So, $Cent_{\mathbb{R}}(A)=\emptyset$ and
 		$rad_{\mathbb{R}}(A)= \infty$, whereas  $Srad_{X}(A)=1<rad_{X}(A).$
 	}
 \end{example}
 
 \begin{example}\label{3.7.}
 	 {\normalfont Let $\mathbb{R}$ be the set of real numbers with usual metric. For $A=\bigcup\limits_{n\in\mathbb{N}} [n+\frac{1}{n},n+1]\subseteq\mathbb{R}$, we have $Cent_{\mathbb{R}}(A)=\emptyset$ and $rad_{\mathbb{R}}(A)=\infty$ but $Srad_{\mathbb{R}}(A)=\frac{1}{2}.$
 }
 \end{example}

\begin{example}\label{2.6}
      {\normalfont Let $X=(-\infty,0)\cup (\mathbb{Q}\cap\left[0,\pi\right])\cup[\pi,\infty)$ be a metric subspace of the Euclidean line $\mathbb{R}$. For $A= (\mathbb{Q}\cap\left[0,\pi\right])\subseteq X,$ we have $\partial_{X}(A)=\{0,\pi\}$, $Cent_{X}(A)=\emptyset$ and $rad_{X}(A)=\infty$ but $Srad_{X}(A)=\frac{\pi}{2} < rad_{X}(A).$ }
\end{example}

\begin{example}
	{ \normalfont Let $I_n=[0,1], n\in \mathbb{N}$ be intervals. Take a disjoint union $X=\sqcup_{n\in \mathbb{N}}I_{n}$ and define $d_{X}(a_i, b_j)=\frac{1}{2}(1-\delta_i^j)+ (2-\delta_i^j)|a-b|,$ where $a_i \in I_i, b_j\in I_j$, $\forall i,j\in \mathbb{N}$  
	 and $\delta_i^j$ denote the kronecker delta. Then $(X,d_{X})$ is a metric space. For $A=\sqcup_{n=3}^\infty\left[\frac{1}{n},1-\frac{1}{n}\right]\subseteq X$, we have $\partial_{X}(A)= \sqcup_{n=3}^\infty\{\frac{1}{n},1-\frac{1}{n}\}$, $Cent_{X}(A)=\emptyset$, $rad_{X}(A)=\infty$ but $Srad_{X}(A)=\frac{1}{2}<rad_{X}(A).$}
\end{example}

Notice that if boundary of a subset $A$ of a metric space $X$ is empty then every point of $A$ is at infinite distance from $\partial_{X}(A)$. As the boundary of any metric space X is empty in itself, we have $Cent_{X}(X) =X$ $\&$ $rad_{X}(X)=\infty.$ Similarly, for the empty set $\emptyset$, $Cent_{X}(\emptyset) =\emptyset$ $\&$ $rad_{X}(\emptyset)=\infty.$ 
 
 \begin{lemma}\label{2.8}
    For any clopen subset $A$ of a metric space $X$, $Cent_{X}(A)= A$ and $rad_{{X}}(A) = \infty$. 
 \end{lemma}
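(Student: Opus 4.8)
The plan is to unravel the definitions and use the elementary fact that the boundary of a clopen set is empty. First I would observe that if $A$ is clopen in $X$, then $A$ is both open and closed, so $\overline{A}=A$ and $A^\circ = A$, whence $\partial_X(A)=\overline{A}\setminus A^\circ = A\setminus A = \emptyset$. This is the one genuine content step, and it is completely routine.

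Next I would feed $\partial_X(A)=\emptyset$ into the definition of center. For any subset $B$ of a metric space, $d_X(b,\emptyset)=\inf\emptyset = \infty$ by the usual convention (already invoked in the excerpt, where it is noted that ``if boundary of a subset $A$ of a metric space $X$ is empty then every point of $A$ is at infinite distance from $\partial_X(A)$''). Hence for every $a\in A$ we have $d_X(a,\partial_X(A))=\infty$, so trivially $d_X(a,\partial_X(A))\ge d_X(b,\partial_X(A))$ for all $b\in A$. Therefore every point of $A$ satisfies the defining condition for $Cent_X(A)$, giving $Cent_X(A)=A$. (If $A=\emptyset$ this reads $Cent_X(\emptyset)=\emptyset$, consistent with the remark just above the lemma, so no separate case is needed.)

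Finally, $rad_X(A)=d_X(Cent_X(A),\partial_X(A))=d_X(A,\emptyset)=\infty$ directly from the definition of radius, again using the empty-infimum convention; equivalently one can cite the remark preceding the lemma that $Cent_X(A)$ being all of $A$ with each point at distance $\infty$ forces $rad_X(A)=\infty$. There is essentially no obstacle here: the only thing to be careful about is to state the empty-infimum convention $d_X(\cdot,\emptyset)=\infty$ explicitly so the argument is airtight, and to note that the degenerate case $A=\emptyset$ is covered by the same reasoning.
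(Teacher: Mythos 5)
Your proof is correct and follows exactly the route the paper intends: the paper states this lemma without a written proof, relying on the immediately preceding observation that an empty boundary puts every point of $A$ at infinite distance from $\partial_X(A)$, which is precisely the argument you spell out. Nothing is missing.
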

 

 But if a subset A of a metric space X has infinite radius then it does not mean that A is clopen in X. Consider a subset $A=\left[0,\infty\right)$ of the set $\mathbb{R}$ of real numbers with the usual metric. Then, $\partial_{\mathbb{R}}(A)=\{0\}$. 
 Here, $Cent_{\mathbb{R}}(A)=\emptyset$ and
  $rad_{\mathbb{R}}(A)= Srad_{\mathbb{R}}(A)=\infty$.\\

 The following result is for nonclopen subsets of a metric space.
 
\begin{theorem}\label{3.6}
     Let $A$ be a nonclopen subset of a metric space X. Then  $Cent_{X}(A)$ is nonempty if and only if $rad_{X}(A)$ is finite.
     \begin{proof}
     	As $A \subseteq X$ is nonclopen, $\partial_{X}(A) \neq \emptyset$. If  $Cent_{X}(A)$ is nonempty then $rad_{X}(A)=d_{X}(Cent_{X}(A),\partial_{X}(A))=\inf_{b\in \partial_{X}(A)}d_{X}(a,b)\leq d_{X}(a,b),\forall a\in Cent_{X}(A),\\ \forall b\in \partial_{X}(A)$, which is finite. \par
    Conversely, if $rad_{X}(A)$ is finite then by the definition of radius, the center of $A$ is nonempty.
     \end{proof}
\end{theorem}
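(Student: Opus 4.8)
The plan is to establish the two implications separately and to notice at the outset that the nonclopen hypothesis is needed for only one of them. Since $A$ is nonclopen, $\partial_{X}(A) \neq \emptyset$, and this single fact, together with the remarks recorded just before the theorem, will do most of the work.

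For the ``only if'' direction I would assume $Cent_{X}(A) \neq \emptyset$, pick any $a \in Cent_{X}(A)$, and pick any $b \in \partial_{X}(A)$ (possible since $\partial_{X}(A) \neq \emptyset$). Using the identity $rad_{X}(A) = d_{X}(a,\partial_{X}(A)) = \inf_{c \in \partial_{X}(A)} d_{X}(a,c)$ stated after the definition of the radius, I obtain $rad_{X}(A) \leq d_{X}(a,b) < \infty$, so $rad_{X}(A)$ is finite. This is the only place the hypothesis enters: if $A$ were clopen and nonempty, then $Cent_{X}(A) = A$ by Lemma \ref{2.8} while $rad_{X}(A) = \infty$, so the statement genuinely fails if ``nonclopen'' is dropped.

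For the ``if'' direction I would argue contrapositively: assuming $Cent_{X}(A) = \emptyset$, I must show $rad_{X}(A) = \infty$. By definition the radius is the distance $d_{X}(Cent_{X}(A), \partial_{X}(A))$ from the center to the boundary, and the distance from the empty set to any set is $\inf \emptyset = +\infty$; this is exactly the convention already fixed in the discussion preceding the theorem. Hence $rad_{X}(A)$ being finite forces $Cent_{X}(A) \neq \emptyset$. Both halves are short, so I do not anticipate a real obstacle; the only points deserving care are being explicit about the convention $d_{X}(\emptyset, S) = \infty$ and, for the forward direction, about why the ``nonclopen'' assumption cannot be omitted.
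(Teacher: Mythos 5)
Your proposal is correct and follows essentially the same route as the paper: the forward direction bounds $rad_{X}(A)=\inf_{c\in\partial_{X}(A)}d_{X}(a,c)$ by $d_{X}(a,b)$ for any boundary point $b$ (which exists since $A$ is nonclopen), and the converse is the paper's appeal to the convention that an empty center forces $rad_{X}(A)=\infty$. Your added remarks on the necessity of the nonclopen hypothesis and the explicit convention $\inf\emptyset=+\infty$ are sound but do not change the argument.
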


Next, we discuss examples of subsets of a metric space with zero radius.
Consider $\mathbb{N} \subseteq \mathbb{R}$, the set of real numbers with usual metric. Then $Cent_\mathbb{R}(\mathbb{N})=\mathbb{N}$ and $rad_\mathbb{R}(\mathbb{N})=0$. Infact, for any totally disconnected subset A of $\mathbb{R}$, we get $Cent_{\mathbb{R}}(A)= A$ and $rad_{\mathbb{R}}(A)=0.$ For the unit circle $\mathbb{S}^{1}$ in the Euclidean plane $\mathbb{R}^{2}$, $Cent_{\mathbb{R}^{2}}(\mathbb{S}^{1}) = \mathbb{S}^{1}$ $\&$ $rad_{\mathbb{R}^{2}}(\mathbb{S}^{1}) = 0.$   We know that topological manifolds are metrizable spaces. Let $N$ be an $n$-dimensional submanifold of a topological manifold $M$ of dimension $m$ where $n<m.$ Note that $N\subseteq \partial_{M}(N)$, and hence $Cent_{M}(N)=N$ and $rad_{M}(N)=0$. Also notice that in Example \ref{3.5}, $Cent_{\mathbb{R}^{n+1}}(\Delta^n)=\Delta^n$ and $rad_{\mathbb{R}^{n+1}}(\Delta^n)=0$.\par
In general, for any subset $A$ of a metric space $X$ contained in its boundary, it means for $A$ having empty interior, we have

 \begin{lemma}\label{A}
   Let $A$ be a nonempty subset of a metric space X such that $A$ has empty interior. Then, $Cent_{X}(A)= A$ and $rad_{X}(A)= 0$.
\end{lemma}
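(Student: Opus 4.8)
The plan is to show both $Cent_X(A)=A$ and $rad_X(A)=0$ by a single observation: if $A$ has empty interior, then $A\subseteq \partial_X(A)$. Indeed, for any $a\in A$, since $a\notin A^\circ=\emptyset$, every open ball around $a$ meets the complement $A^c$; and trivially every open ball around $a$ meets $A$ (it contains $a$). Hence $a$ is a boundary point, so $a\in\partial_X(A)$. This gives $A\subseteq\partial_X(A)$, and in particular $\partial_X(A)\neq\emptyset$ since $A\neq\emptyset$.

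Next I would compute the distance from any point of $A$ to the boundary. For $a\in A$ we have $a\in\partial_X(A)$, so $d_X(a,\partial_X(A))=\inf\{d_X(a,b)\mid b\in\partial_X(A)\}\leq d_X(a,a)=0$, and since the infimum of nonnegative numbers is nonnegative, $d_X(a,\partial_X(A))=0$. Thus every point of $A$ realizes the value $0$, which is clearly the maximum possible value of $d_X(\cdot,\partial_X(A))$ on $A$ (distances are nonnegative). By the definition of the center, $Cent_X(A)=\{a\in A\mid d_X(a,\partial_X(A))\geq d_X(b,\partial_X(A))\ \forall b\in A\}=\{a\in A\mid 0\geq 0\}=A$. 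Since $Cent_X(A)=A\neq\emptyset$, the radius is $rad_X(A)=d_X(a,\partial_X(A))=0$ for any $a\in Cent_X(A)$.

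There is essentially no obstacle here; the only point requiring a little care is the very first step, namely justifying $A\subseteq\partial_X(A)$ from the hypothesis of empty interior. One must be careful to use the correct characterization of the boundary as $\partial_X(A)=\overline{A}\setminus A^\circ$ (equivalently, the set of points every neighborhood of which meets both $A$ and $A^c$), and to note that $a\in A$ automatically lies in $\overline{A}$, so $A^\circ=\emptyset$ immediately forces $A\subseteq\overline{A}\setminus A^\circ=\partial_X(A)$. Everything after that is a direct unwinding of the definitions of center, radius, and the distance between a point and a set.
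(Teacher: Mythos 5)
Your proof is correct and follows exactly the route the paper intends: the paper states this lemma without a formal proof, prefacing it with the observation that having empty interior is equivalent to $A\subseteq\partial_{X}(A)$, from which every point of $A$ is at distance $0$ from the boundary, so $Cent_{X}(A)=A$ and $rad_{X}(A)=0$. Your write-up simply makes that implicit argument explicit, including the correct justification that $a\in\overline{A}\setminus A^\circ=\partial_{X}(A)$ for every $a\in A$.
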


    The following result is for subsets  of metric spaces having nonempty interior.

\begin{lemma}\label{B}
    Let X be a metric space and $A \subseteq X$ such that $A$ has nonempty interior. Then, $Cent_{X}(A)$ $\subseteq$ $A^\circ$.
\end{lemma}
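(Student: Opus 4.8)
The plan is to argue by contradiction: suppose $a \in Cent_X(A)$ but $a \notin A^\circ$. Since $a \in A \setminus A^\circ$, every open ball around $a$ meets $A^c$, so $a \in \overline{A^c}$; combined with $a \in A \subseteq \overline{A}$, this gives $a \in \partial_X(A)$, hence $d_X(a, \partial_X(A)) = 0$. Thus $rad_X(A) = d_X(a, \partial_X(A)) = 0$ (using that all points of the center are equidistant from the boundary, as recorded after the definition of radius). But $A$ has nonempty interior: pick $x \in A^\circ$, so there is some $r > 0$ with the open ball $B(x,r) \subseteq A$. Since $A$ is not contained in its boundary in this case — indeed $B(x,r) \cap \partial_X(A) = \emptyset$ because $\partial_X(A) \subseteq A^c \cup \partial_X(A)$ and interior points are not boundary points — we get $d_X(x, \partial_X(A)) \geq r > 0 = rad_X(A) = \sup_{b\in A} d_X(b,\partial_X(A))$, contradicting the definition of $rad_X(A)$ as a supremum over $A$ (here we also use that $Cent_X(A)\neq\emptyset$, so by the remark following the Semi-radius definition $rad_X(A) = \sup_{b\in A} d_X(b,\partial_X(A))$).

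There is a subtlety when $\partial_X(A) = \emptyset$: then $A$ is clopen, Lemma \ref{2.8} gives $Cent_X(A) = A$ and one cannot immediately place the center in $A^\circ$ — but if $A$ is clopen and nonempty, then $A = A^\circ$, so $Cent_X(A) = A = A^\circ$ and the conclusion still holds trivially. So I would dispose of the clopen case first, and then in the main case assume $\partial_X(A) \neq \emptyset$, where $d_X(\,\cdot\,,\partial_X(A))$ is a genuine finite-valued function on $X$ and the argument above runs cleanly.

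I expect the main (though still minor) obstacle to be bookkeeping around the case $Cent_X(A) = \emptyset$, in which the statement $Cent_X(A) \subseteq A^\circ$ is vacuously true and nothing needs to be proved; so the contradiction argument only needs to handle a nonempty center, which is exactly the situation in which $rad_X(A)$ equals the supremum $\sup_{a\in A} d_X(a,\partial_X(A))$ — this identity is the linchpin that lets a single interior point with positive distance to the boundary break the assumption that a boundary point lies in the center. No delicate estimates are needed; the whole proof is a short chain of the standard facts $\partial_X(A) = \overline{A} \cap \overline{A^c}$, interior points have positive distance to the complement, and the supremum characterization of the radius.
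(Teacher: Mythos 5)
Your proof is correct and follows essentially the same route as the paper's: both rest on the dichotomy that a point of $A$ outside $A^\circ$ lies in $\partial_{X}(A)$ and hence has distance $0$ to the boundary, while any interior point has positive distance to it, which forces the center into the interior (with the clopen case disposed of separately via Lemma \ref{2.8}). The only difference is cosmetic --- you phrase it as a contradiction routed through the supremum characterization of $rad_{X}(A)$, whereas the paper argues directly from the definition of the center.
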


\begin{proof}
	If $A$ is clopen then it is true by Lemma \ref{2.8}. And, if $A$ is nonclopen then for any $a \in A$, either $a \in \partial_{X}(A)$ or $a \in A^\circ$. If $a \in \partial_{X}(A)$ then $d_{X}(a,\partial_{X}(A))=0$. If $a \in A^\circ$ then $\exists$ $\epsilon > 0 $ such that $B_{d_{X}}(a, \epsilon) \subseteq A^\circ$. As  $B_{d_{X}}(a,\epsilon) \cap \partial_{X}(A) =\emptyset$, we have $d_{X}(a,\partial_{X}(A)) \geq \epsilon >0, \forall a\in A^\circ.$ So, by definition of center of $A$ in $X$, we get $Cent_{X}(A) \subseteq A^\circ$.
\end{proof}
     
\begin{theorem}\label{C}
    Let X be a metric space and $A$ be a nonempty subset of X. Then, $A^\circ = \emptyset$ if and only if $rad_{X}(A) = 0$.
\end{theorem}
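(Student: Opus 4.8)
The plan is to prove both implications, noting that most of the work has already been done in Lemmas~\ref{A} and~\ref{B}. For the forward direction, assume $A^\circ = \emptyset$. Then Lemma~\ref{A} applies directly: since $A$ is nonempty with empty interior, $Cent_X(A) = A$ and $rad_X(A) = 0$, so in particular $rad_X(A) = 0$. This direction is essentially immediate once we invoke the earlier lemma.

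For the converse, assume $rad_X(A) = 0$ and suppose toward a contradiction that $A^\circ \neq \emptyset$. First I would dispose of the clopen case: if $A$ were clopen, then by Lemma~\ref{2.8} we would have $rad_X(A) = \infty \neq 0$, so $A$ is nonclopen, whence $\partial_X(A) \neq \emptyset$. Now pick any point $a \in A^\circ$. Since $A^\circ$ is open, there is $\epsilon > 0$ with $B_{d_X}(a,\epsilon) \subseteq A^\circ$, and since $A^\circ \cap \partial_X(A) = \emptyset$ this forces $d_X(a, \partial_X(A)) \geq \epsilon > 0$. Thus $Srad_X(A) = \sup_{b \in A} d_X(b, \partial_X(A)) \geq \epsilon > 0$. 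Since $rad_X(A) \geq Srad_X(A)$ always holds, we get $rad_X(A) > 0$, contradicting the hypothesis $rad_X(A) = 0$. (Alternatively, one can run this through Lemma~\ref{B}: if $rad_X(A)=0$ and $A^\circ \neq \emptyset$, then $Cent_X(A) \subseteq A^\circ$ would be nonempty by Theorem~\ref{3.6} applied in the nonclopen case — but any center point lies in $A^\circ$ and hence is at positive distance from $\partial_X(A)$, again a contradiction.)

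I do not anticipate a serious obstacle here; the statement is a clean corollary of the structural lemmas already established. The one point requiring a little care is handling the degenerate cases correctly: the clopen case must be excluded first (since there $rad_X(A) = \infty$ and the boundary is empty, so the "pick a point at positive distance from the boundary" argument would be vacuous), and one should keep in mind that when $\partial_X(A) = \emptyset$ the convention $d_X(a,\emptyset) = \infty$ is in force. Organizing the argument as "clopen $\Rightarrow$ contradiction with $rad_X(A)=0$; nonclopen $\Rightarrow$ boundary nonempty, then use the open-ball argument" keeps these conventions from causing trouble. I would write the proof in three or four sentences, citing Lemma~\ref{A} for one direction and Lemma~\ref{2.8} together with the open-ball estimate (or Lemma~\ref{B}) for the other.
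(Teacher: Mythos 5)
Your proof is correct, and the forward direction is identical to the paper's (both invoke Lemma~\ref{A}). For the converse, your primary argument takes a genuinely different, and somewhat more elementary, route: after excluding the clopen case via Lemma~\ref{2.8}, you take an arbitrary interior point, extract a positive lower bound $\epsilon$ on its distance to $\partial_X(A)$, and conclude $rad_X(A)\geq Srad_X(A)\geq\epsilon>0$, contradicting $rad_X(A)=0$. This bypasses the center entirely and needs only the inequality $rad_X(A)\geq Srad_X(A)$ noted after the definition of semi-radius. The paper instead argues through the center: from $rad_X(A)=0$ it deduces nonclopenness (Lemma~\ref{2.8}), then nonemptiness of $Cent_X(A)$ (Theorem~\ref{3.6}), then $Cent_X(A)\subseteq\partial_X(A)$ because each center point is at distance $0$ from the closed set $\partial_X(A)$, and finally contradicts Lemma~\ref{B} ($Cent_X(A)\subseteq A^\circ$) using $A^\circ\cap\partial_X(A)=\emptyset$. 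Your parenthetical alternative is essentially this second argument. The paper's route yields the additional structural fact $Cent_X(A)\subseteq\partial_X(A)$ when the radius is zero, whereas your main route is shorter and avoids appealing to Theorem~\ref{3.6} and Lemma~\ref{B}; both are sound, and your care with the clopen case and the convention $d_X(a,\emptyset)=\infty$ is exactly what is needed.
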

\begin{proof}
      If $A^\circ = \emptyset$ then
      by Lemma \ref{A}, $rad_{X}(A)= 0$. Conversely, assume that $rad_{X}(A)= 0$ then by Lemma \ref{2.8}, $A$ is nonclopen and by Theorem \ref{3.6}, $Cent_{X}(A)$ is nonempty. So, for $x \in Cent_{X}(A),$ we get $d_{X}(x, \partial_{X}(A)) = 0 \implies x \in \overline{\partial_{X}(A)}= \partial_{X}(A)$, and hence we get $Cent_{X}(A) \subseteq \partial_{X}(A)$. 
     If $A^\circ \neq \emptyset$ then by Lemma \ref{B} $Cent_{X}(A) \subseteq A^\circ.$ As $\partial_{X}(A) \cap A^\circ=\emptyset,$ we get $Cent_{X}(A)=\emptyset,$ a contradiction. Thus, $A^{\circ}=\emptyset.$
\end{proof}





\begin{theorem}
    The center of a subset $A$ of a metric space $X$ is closed in A.
\end{theorem}
\begin{proof}
     Let $b \in A$ be a limit point of $Cent_{X}(A)$ in A. Then, $\exists$ a sequence $(a_{n})$ in $Cent_{X}(A)$ such that $(a_{n}) \longrightarrow b$. Consider a map $p: A \longrightarrow \mathbb{R}$ such that $p(x) = d_{X}(x, \partial_{X}(A)), \forall x \in A$. It is easy to observe that $p$ is a continuous map. By the continuity of $p,$ we have $p(a_{n})$ $\longrightarrow p(b)$. As $a_{n} \in Cent_{X}(A),$ which means $p(a_{n})= d_{X}(a_{n}, \partial_{X}(A))= rad_{X}(A), \forall n \in \mathbb{N}.$ So, $p(a_{n})$ is a constant sequence, and hence it converges to $rad_{X}(A)$. Thus, $p(b) = d_{X}(b,\partial_{X}(A)) = rad_{X}(A).$ This implies that $b \in Cent_{X}(A)$. Hence, $Cent_{X}(A)$ is a closed subset of A.
\end{proof}

It is not necessary that  $Cent_{X}(A)$ is a closed subset of X. For example: Consider a subset 
$A= (0,1)\times \{0\}$ of the Euclidean plane $\mathbb{R}^{2}$. Then $Cent_{\mathbb{R}^{2}}(A)= A$ which is not closed in $\mathbb{R}^{2}.$\\

Now for any subset A of a metric space X, we establish a relationship between the radii of  $\overline{A}$ and $A^\circ$ with the radius of $A$.

\begin{theorem}\label{3.28}
	Let $X$ be a metric space and $A \subseteq X$ such that  $Cent_{X}(A)\neq \emptyset$. Then $rad_{X}(A)\leq rad_{X}(A^\circ)$ and $rad_{X}(A)\leq rad_{X}(\overline{A}).$ 
\end{theorem}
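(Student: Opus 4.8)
The plan is to reduce everything to ingredients already recorded in the excerpt: the boundary monotonicity $\partial_X(A^\circ)\subseteq\partial_X(A)$ and $\partial_X(\overline A)\subseteq\partial_X(A)$, the distance monotonicity $A\subseteq B\Rightarrow d_X(x,B)\le d_X(x,A)$, the inequality $rad_X(\cdot)\ge Srad_X(\cdot)$, and the basic identity $d_X(a,\partial_X(A))=rad_X(A)$ for every $a\in Cent_X(A)$.

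First I would peel off the degenerate case. If $A^\circ=\emptyset$, then by Lemma~\ref{A} (equivalently Theorem~\ref{C}) we have $rad_X(A)=0$; since every radius is non-negative or $\infty$, both asserted inequalities hold trivially (and in fact $rad_X(A^\circ)=rad_X(\emptyset)=\infty$). So from now on I may assume $A^\circ\neq\emptyset$, hence $rad_X(A)>0$.

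Now fix $a\in Cent_X(A)$, which is possible by hypothesis. Then $d_X(a,\partial_X(A))=rad_X(A)$, and by Lemma~\ref{B} we have $a\in A^\circ$; of course also $a\in A\subseteq\overline A$. For the first inequality: since $\partial_X(A^\circ)\subseteq\partial_X(A)$, distance monotonicity gives $d_X(a,\partial_X(A^\circ))\ge d_X(a,\partial_X(A))=rad_X(A)$; since $a\in A^\circ$, the point $a$ is admissible in the supremum defining $Srad_X(A^\circ)$, so $Srad_X(A^\circ)\ge d_X(a,\partial_X(A^\circ))\ge rad_X(A)$; finally $rad_X(A^\circ)\ge Srad_X(A^\circ)\ge rad_X(A)$. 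The second inequality is obtained by the identical chain with $\overline A$ in place of $A^\circ$, using $\partial_X(\overline A)\subseteq\partial_X(A)$ and $a\in\overline A$.

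I do not expect a genuine obstacle here; the one thing to be careful about is that one should \emph{not} try to compare the centers directly or take the supremum over $A$ and transplant it to $A^\circ$ (which may be empty, or may have a non-attained center), but instead route the whole estimate through a single well-chosen point $a\in Cent_X(A)$ and the semi-radius, after first disposing of the case $A^\circ=\emptyset$.
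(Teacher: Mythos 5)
Your proof is correct and rests on the same two ingredients as the paper's own argument: the inclusions $\partial_{X}(A^\circ)\subseteq\partial_{X}(A)$ and $\partial_{X}(\overline{A})\subseteq\partial_{X}(A)$ combined with distance monotonicity, plus the relation $rad_{X}(\cdot)\geq Srad_{X}(\cdot)$. The only organizational difference is that you anchor the estimate at a single point $a\in Cent_{X}(A)$ (placed in $A^\circ$ via Lemma \ref{B}) and route it through the semi-radius after disposing of the case $A^\circ=\emptyset$, whereas the paper cases on whether $Cent_{X}(A^\circ)$ is empty and, when it is not, compares a point of $Cent_{X}(A^\circ)$ against the supremum of $d_{X}(\cdot,\partial_{X}(A))$ over all of $A$; both routes are sound.
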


\begin{proof} 

If $Cent_{X}(A^\circ)$ is empty then $rad_{X}(A^\circ)=\infty$, which implies $rad_{X}(A)\leq rad_{X}(A^\circ)$. If $Cent_{X}(A^\circ)\neq\emptyset$ then for
 $b\in Cent_{X}(A^\circ)$, we have $ d_{X}(b,\partial_{X}(A^\circ)) \geq d_{X}(b',\partial_{X}(A^\circ)),\forall b'\in A^\circ.$ As $\partial_{X}(A^\circ)\subseteq\partial_{X}(A),$ we get $d_{X}(b',\partial_{X}(A^\circ))\geq d_{X}(b',\partial_{X}(A))$, $\forall b'\in A^\circ\subseteq A.$ Hence, $ d_{X}(b,\partial_{X}(A^\circ))\geq d_{X}(b',\partial_{X}(A)),\forall b'\in A^\circ.$ This implies that $rad_{X}(A^\circ)\geq \sup_{b'\in A^\circ}d_{X}(b',\partial_{X}(A))$ and for any $b'\in \partial_{X}(A),$ we have $ d_{X}(b',\partial_{X}(A))=0$. So, $ rad_{X}(A^\circ)
 \geq \sup_{b'\in A}d_{X}(b',\partial_{X}(A))
=rad_{X}(A).$ Thus, $rad_{X}(A)\leq rad_{X}(A^\circ)$.\par 
Similarly, as $\partial_{X}(\overline{A})\subseteq\partial_{X}(A)$, we get $rad_{X}(A)\leq rad_{X}(\overline{A}).$ 
\end{proof}


\begin{example}
	{\normalfont Consider the set $\mathbb{R}$ of real numbers with the usual metric.\\
		 Let $A=\{\frac{1}{n} |n \in \mathbb{N}\}$ $\subseteq \mathbb{R}.$ Then $\overline{A}=\{0\} \cup \{\frac{1}{n} |n \in \mathbb{N}\}$ and $A^\circ = \emptyset$. We get $Cent_{\mathbb{R}}(A^\circ)= \emptyset$ $\&$ $Cent_{\mathbb{R}}(A)= A$,   $Cent_{\mathbb{R}}(\overline{A})= \overline{A}$ 
	and $rad_{\mathbb{R}}(A^\circ) = \infty$ $\&$ $rad_{\mathbb{R}}(A) =rad_{\mathbb{R}}(\overline{A}) = 0$.\\
	Let $B=(\mathbb{Q}\cap [0,\infty))\subseteq\mathbb{R}$. Then $B^\circ=\emptyset$ and $\overline{B}=[0,\infty)$. We get $Cent_{\mathbb{R}}(B^\circ)= \emptyset$ $\&$ $Cent_{\mathbb{R}}(B)= B$,   $Cent_{\mathbb{R}}(\overline{B})= \emptyset$ 
	and $rad_{\mathbb{R}}(B) = 0$ $\&$ $rad_{\mathbb{R}}(B^\circ) =rad_{\mathbb{R}}(\overline{B}) = \infty$.\\ 
	Let $C=[0,1)\subseteq\mathbb{R}$ then $rad_{\mathbb{R}}(C)= rad_{\mathbb{R}}(C^\circ) =rad_{\mathbb{R}}(\overline{C}) =\frac{1}{2}$.}
\end{example}

In the Euclidean plane  $\mathbb{R}^{2}$, let $Y=\mathbb{R}\times \{0\}$  and $A=(0,1)\times \{0\}.$ Here $A\subseteq Y\subseteq \mathbb{R}^2$ and $Cent_{X}(A)\neq\emptyset$. 
 Notice that 		$rad_{\mathbb{R}^2}(A)= 0\leq rad_{Y}(A)= \frac{1}{2}$. In general, we have the following result.

	\begin{theorem}
	Let $X$ be a metric space and $Y$ be a subspace of X. If $A \subseteq Y \subseteq X$ such that $Cent_{X}(A)$ is nonempty,  then $rad_{X}(A) \leq rad_{Y}(A).$
\end{theorem}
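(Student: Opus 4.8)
The plan is to compare, for a fixed center point $a \in Cent_X(A)$, the distance to the boundary computed in $X$ versus the distance computed in $Y$, and then to track how this inequality propagates to suprema and hence to the two radii. The fundamental observation is that the boundary of $A$ relative to the larger space $X$ can only be larger than the boundary of $A$ relative to the subspace $Y$: indeed, $\partial_Y(A) = \overline{A}^Y \setminus A^{\circ_Y}$, where the subspace closure satisfies $\overline{A}^Y = \overline{A}^X \cap Y$ and the subspace interior satisfies $A^{\circ_Y} \supseteq A^{\circ_X} \cap Y = A^{\circ_X}$ (since $A \subseteq Y$). Combining these gives $\partial_Y(A) \subseteq \partial_X(A) \cap Y \subseteq \partial_X(A)$. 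Using the third bulleted basic fact from Section 2 (if $S \subseteq T \subseteq X$ then $d_X(x,T) \le d_X(x,S)$ for all $x$), it follows that $d_X(a, \partial_X(A)) \le d_X(a, \partial_Y(A))$ for every $a \in A$; and since $A \subseteq Y$, the distance $d_Y(a, \partial_Y(A))$ computed with the restricted metric equals $d_X(a, \partial_Y(A))$.

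Next I would handle the two cases for $Cent_Y(A)$ exactly as in the proof of Theorem \ref{3.28}. If $Cent_Y(A) = \emptyset$, then $rad_Y(A) = \infty$ and the inequality $rad_X(A) \le rad_Y(A)$ is immediate. If $Cent_Y(A) \neq \emptyset$, pick $b \in Cent_Y(A)$; then for all $b' \in A$ we have $d_Y(b, \partial_Y(A)) \ge d_Y(b', \partial_Y(A))$. Using the preceding paragraph, $d_Y(b', \partial_Y(A)) = d_X(b', \partial_Y(A)) \ge d_X(b', \partial_X(A))$ for all $b' \in A$, whence $rad_Y(A) = d_Y(b,\partial_Y(A)) \ge \sup_{b' \in A} d_X(b', \partial_X(A))$. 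Since $Cent_X(A) \neq \emptyset$ by hypothesis, Theorem \ref{3.6} (together with the discussion of $Srad$) gives $rad_X(A) = \sup_{b' \in A} d_X(b', \partial_X(A))$, and so $rad_X(A) \le rad_Y(A)$, as desired.

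The main obstacle is the set-theoretic comparison $\partial_Y(A) \subseteq \partial_X(A)$; everything afterward is a mechanical repetition of the monotonicity argument already used for $A^\circ$ and $\overline{A}$ in Theorem \ref{3.28}. One must be careful that this containment can be strict and that $\partial_Y(A)$ might even be empty (e.g. when $A$ is clopen in $Y$ but not in $X$), which is precisely the situation captured by the $Cent_Y(A) = \emptyset$ branch; the example $A = (0,1)\times\{0\} \subseteq Y = \mathbb{R}\times\{0\} \subseteq \mathbb{R}^2$ preceding the statement illustrates that the inequality is genuinely useful even when $A^{\circ_X} = \emptyset$. A secondary point worth stating explicitly is the identity $d_Y(x, S) = d_X(x, S)$ for $x \in Y$ and $S \subseteq Y$, which holds because the metric on $Y$ is the restriction of $d_X$; this lets us move freely between the two ambient computations of the distance to $\partial_Y(A)$.
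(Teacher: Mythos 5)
Your proposal is correct and follows essentially the same route as the paper: both rest on the containment $\partial_{Y}(A)\subseteq\partial_{X}(A)$, the resulting pointwise inequality $d_{X}(a,\partial_{X}(A))\leq d_{Y}(a,\partial_{Y}(A))$, and passage to suprema using $Cent_{X}(A)\neq\emptyset$. The only differences are presentational: you justify the boundary containment via subspace closures and interiors and split explicitly on whether $Cent_{Y}(A)$ is empty, whereas the paper asserts the containment and absorbs both cases into the single inequality $\sup_{a\in A}d_{Y}(a,\partial_{Y}(A))\leq rad_{Y}(A)$.
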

\begin{proof} 
		We know that $\partial_{Y}(A)\subseteq \partial_{X}(A)\subseteq X.$ So, $ d_{X}(a,\partial_{X}(A))\leq d_{X}(a,\partial_{Y}(A)) ,\forall\, a \in A \implies d_{X}(a,\partial_{X}(A))\leq d_{Y}(a,\partial_{Y}(A)),\forall\, a\in A.$ And, if $Cent_{X}(A)\neq\emptyset$ , we have $rad_{X}(A)=\sup_{a\in A}d_{X}(a,\partial_{X}(A))\leq \sup_{a\in  A}d_{Y}(a,\partial_{Y}(A))\leq rad_{Y}(A).$
\end{proof}

\begin{remark}
{\normalfont In the above result, $Cent_{X}(A)\neq\emptyset$ is necessary. In Example \ref{2.6}, let $Y= (\mathbb{Q}\cap\left[0,\pi\right])\cup[\pi,\infty).$ Then $A\subseteq Y\subseteq X$. Here $\partial_{Y}(A)=\{\pi\}$, $Cent_{Y}(A)=\{0\}$ and $Cent_{X}(A)=\emptyset$. But $rad_{Y}(A)=\pi$ whereas $rad_{X}(A)=\infty$.}
\end{remark}
Let $f:X\longrightarrow Y$ be an isometry between two metric spaces $X$ and $Y$ \cite{Robert}. We know that isometry preserves the boundary of a subset. Here, we observe that isometry also preserves the center and radius of a subset.\par

First, we prove the following lemma.
	\begin{lemma}\label{2.17}
   Let $f:X\longrightarrow Y$ be an isometry between two metric spaces $X$ and $Y.$ For a subset $A$ of $X$, $Cent_{X}(A)\neq\emptyset$ if and only if $Cent_{Y}(f(A))\neq\emptyset$.
	\end{lemma}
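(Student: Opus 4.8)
The plan is to exploit the fact that an isometry $f:X\to Y$ is a bijection that preserves distances, and—by the first bullet in Section~2—preserves boundaries, in the sense that $f(\partial_X(A))=\partial_Y(f(A))$. From these two facts the whole statement should follow by transporting the relevant distance function across $f$. Concretely, for a point $a\in A$ I would compute
\[
d_Y\bigl(f(a),\partial_Y(f(A))\bigr)=d_Y\bigl(f(a),f(\partial_X(A))\bigr)=\inf_{b\in\partial_X(A)}d_Y\bigl(f(a),f(b)\bigr)=\inf_{b\in\partial_X(A)}d_X(a,b)=d_X\bigl(a,\partial_X(A)\bigr),
\]
using the isometry property in the third equality and the boundary-preservation in the first. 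This identity, valid for every $a\in A$, says that the function $p_A:A\to\mathbb R$, $p_A(x)=d_X(x,\partial_X(A))$, and its counterpart $p_{f(A)}:f(A)\to\mathbb R$ satisfy $p_{f(A)}\circ f|_A=p_A$.

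Next I would translate the definition of center through this identity. A point $a\in A$ lies in $Cent_X(A)$ precisely when $p_A(a)\ge p_A(b)$ for all $b\in A$; since $f|_A:A\to f(A)$ is a bijection and $p_{f(A)}(f(x))=p_A(x)$, this holds if and only if $p_{f(A)}(f(a))\ge p_{f(A)}(f(b))$ for all $b\in A$, i.e. for all points of $f(A)$. Hence $a\in Cent_X(A)\iff f(a)\in Cent_Y(f(A))$, which gives $f(Cent_X(A))=Cent_Y(f(A))$. In particular, since $f$ is a bijection, $Cent_X(A)=\emptyset$ if and only if $Cent_Y(f(A))=\emptyset$, which is exactly the claimed equivalence.

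I do not anticipate a serious obstacle here; the content is entirely bookkeeping once the boundary-preservation bullet is invoked. The one point requiring a little care is that the definition of center quantifies over $b\in A$, so I must make sure I am using the surjectivity of $f|_A$ onto $f(A)$ (so that ``for all $b\in A$'' transports to ``for all points of $f(A)$'') and its injectivity is not actually needed for this direction, though it is convenient for stating $f(Cent_X(A))=Cent_Y(f(A))$. Another minor subtlety is the degenerate case $\partial_X(A)=\emptyset$ (equivalently $A$ clopen): then $\partial_Y(f(A))=\emptyset$ as well, and $d_X(a,\emptyset)=\infty=d_Y(f(a),\emptyset)$, so by Lemma~\ref{2.8} both centers equal the full sets $A$ and $f(A)$ and are nonempty, so the equivalence holds trivially. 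With these cases noted, the argument is complete, and it moreover sets up the stronger statement $f(Cent_X(A))=Cent_Y(f(A))$ and $rad_X(A)=rad_Y(f(A))$ that presumably follows in the next theorem.
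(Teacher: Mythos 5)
Your argument is correct and follows essentially the same route as the paper's own proof: both establish $d_Y(f(a),\partial_Y(f(A)))=d_X(a,\partial_X(A))$ via boundary preservation and distance preservation, then transport the defining inequality of the center across the bijection $f|_A$. Your added remarks on surjectivity and the clopen case are sensible refinements but do not change the approach.
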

	\begin{proof}	As $f$ is an isometry, $ f(\partial_{X}(A))= \partial_{Y}(f(A)).$ For $a\in A$, we have $d_{X}(a,\partial_{X}(A))$\\ 
	$= d_{Y}(f(a),f(\partial_{X}(A))) = d_{Y}(f(a),\partial_{Y}(f(A))).$ Notice that $Cent_{X}(A)\neq \emptyset \iff \exists a\in A$ such that $d_{X}(a,\partial_{X}(A))\geq d_{X}(b,\partial_{X}(A)),\forall b\in A \iff d_{Y}(f(a),\partial_{Y}(f(A)))
	\geq d_{Y}(f(b),\partial_{Y}(f(A)),\forall b\in A. \iff f(a)\in Cent_{Y}(f(A)) \iff Cent_{Y}(f(A))\neq\emptyset. $ 
	\end{proof}
	
	\begin{theorem}\label{2.18}
		Let $f:X\longrightarrow Y$ be an isometry between two metric spaces $X$ and $Y.$ For a subset $A$ of $X$, $rad_{X}(A)= rad_{Y}(f(A))$ and $f(Cent_{X}(A))=Cent_{Y}(f(A)).$  
	\end{theorem}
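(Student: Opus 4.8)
The plan is to leverage Lemma \ref{2.17} to split into two cases according to whether the center is empty, and in the nonempty case to transport the defining supremum through the isometry. First I would recall the two facts already available: $f(\partial_X(A)) = \partial_Y(f(A))$ (isometries preserve boundaries), and consequently, for every $a \in A$, $d_X(a, \partial_X(A)) = d_Y(f(a), \partial_Y(f(A)))$, which was computed inside the proof of Lemma \ref{2.17}. These give a bijection $A \to f(A)$ that preserves the ``distance-to-boundary'' function pointwise.

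For the radius equality, I would first dispose of the empty case: by Lemma \ref{2.17}, $Cent_X(A) = \emptyset$ iff $Cent_Y(f(A)) = \emptyset$, and in that case both radii equal $\infty$ by definition, so $rad_X(A) = rad_Y(f(A))$. If instead $Cent_X(A) \neq \emptyset$, then $Cent_Y(f(A)) \neq \emptyset$ as well, and by the remark following Definition~3.4 (or directly from Definition~3.2), $rad_X(A) = \sup_{a \in A} d_X(a, \partial_X(A))$ and $rad_Y(f(A)) = \sup_{c \in f(A)} d_Y(c, \partial_Y(f(A)))$. Since $f$ restricts to a bijection $A \to f(A)$ and $d_X(a, \partial_X(A)) = d_Y(f(a), \partial_Y(f(A)))$ for each $a$, the two sets of which we take suprema coincide, hence the suprema are equal.

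For the center equality, in the empty case both sides are $\emptyset$ by Lemma \ref{2.17}, so assume $Cent_X(A) \neq \emptyset$. I would show $f(Cent_X(A)) \subseteq Cent_Y(f(A))$ and the reverse. For ``$\subseteq$'': if $a \in Cent_X(A)$, then $d_X(a, \partial_X(A)) \geq d_X(b, \partial_X(A))$ for all $b \in A$; applying the pointwise distance identity to both sides and using that every element of $f(A)$ is $f(b)$ for some $b \in A$, we get $d_Y(f(a), \partial_Y(f(A))) \geq d_Y(c, \partial_Y(f(A)))$ for all $c \in f(A)$, i.e. $f(a) \in Cent_Y(f(A))$. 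For ``$\supseteq$'': an element of $Cent_Y(f(A))$ has the form $f(a)$ for some $a \in A$ since $Cent_Y(f(A)) \subseteq f(A)$; running the same chain of equalities in reverse (exactly the iff chain in Lemma \ref{2.17}) shows $a \in Cent_X(A)$, so $f(a) \in f(Cent_X(A))$.

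I do not anticipate a genuine obstacle here; the only point requiring a little care is the empty-center case, where one must invoke the convention $rad = \infty$ and Lemma \ref{2.17} rather than argue with suprema — the supremum $\sup_{a\in A} d_X(a,\partial_X(A))$ is still preserved by $f$, but it need not equal $rad_X(A)$ when $Cent_X(A)=\emptyset$, so the argument must be phrased in terms of the definition of radius, not the Semi-radius. Everything else is a direct translation along the isometry using the identity $d_X(a,\partial_X(A)) = d_Y(f(a),\partial_Y(f(A)))$ established in the previous lemma.
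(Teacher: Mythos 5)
Your proposal is correct and follows essentially the same route as the paper: both dispose of the empty-center case via Lemma \ref{2.17} and then transport the pointwise identity $d_{X}(a,\partial_{X}(A))=d_{Y}(f(a),\partial_{Y}(f(A)))$ through the bijection $A\to f(A)$ to get equality of radii and the two inclusions for the centers. The only cosmetic difference is that the paper derives the radius equality from two opposite inequalities rather than from the coincidence of the two sets whose suprema are taken, which is an equivalent argument.
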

	\begin{proof}
		 First, let $Cent_{X}(A)=\emptyset$. Then it is true by Lemma \ref{2.17}. Next, let $Cent_{X}(A)\neq \emptyset.$ Then $rad_{Y}(f(A))\geq d_{Y}(f(a),\partial_{Y}(f(A))=
		 d_{Y}(f(a),f(\partial_{X}(A)))
		 =d_{X}(a,\partial_{X}(A)),\, \forall a\in A.$ This implies that $rad_{Y}(f(A))\geq rad_{X}(A).$ By Lemma \ref{2.17}, $Cent_{Y}(f(A))\neq\emptyset.$ Similarly, we get $rad_{X}(A)\geq rad_{Y}(f(A)).$ Hence, $rad_{X}(A)= rad_{Y}(f(A)).$\\
		 As $rad_{Y}(f(A))=rad_{X}(A)=d_{X}(Cent_{X}(A),\partial_{X}(A))=
		d_{Y}(f(Cent_{X}(A)),\partial_{Y}(f(A))),$ we get every point of $f(Cent_{X}(A))$ is at the maximum distance from $\partial_{Y}(f(A))).$ So, $ 
		f(Cent_{X}(A))\subseteq Cent_{Y}(f(A)).$ Similarly, we get $Cent_{Y}(f(A))\subseteq f(Cent_{X}(A)).$ Hence, $f(Cent_{X}(A))= Cent_{Y}(f(A)).$
	\end{proof}
	\begin{remark}
		{\normalfont Let $X$ and $Y$ be two metric spaces such that $A\subseteq X$ and $B\subseteq Y$. If $Cent_{X}(A)$ is connected and $Cent_{Y}(B)$ is disconnected then by Theorem \ref{2.18}, there does not exist any isometry between $X$ and $Y$ such that $f(A)=B$.
	    }
	\end{remark}

	\section{\textbf{Center and Radius of a finite product of subsets of metric spaces}}
	
	Let $(X\times Y, d)$ be the product of metric spaces $(X,d_{X}).$ and $(Y,d_{Y}),$ where $d((x_{1},y_{1}),(x_{2},y_{2}))= \max\{d_{X}(x_{1},x_{2}),d_{Y}(y_{1},y_{2})\},\forall\,(x_1,y_1),(x_2,y_2)\in X\times Y.$ Now, we see how  $Cent_{X\times Y}(A\times B)$ is related to $Cent_{X}(A)$ and $Cent_{Y}(B)$.\\
	
	Let $A\subseteq X$ and $B\subseteq Y$ be subsets. If $rad_{X}(A)$ and $rad_{Y}(B)$ are infinite then there are three possible cases:
	(i) both $A$ and $B$ are clopen,  
	(ii) one of $A$ and $B$ is clopen and other has empty center, and
	(iii) $Cent_{X}(A)= Cent_{Y}(B)=\emptyset.$

	\begin{theorem} \label{3.45}
	Let $(X,d_{X})$ and $(Y,d_{Y})$ be two metric spaces. Let $A\subseteq X$ and $B\subseteq Y$ be subsets.
   \begin{itemize}
	[noitemsep,nolistsep]
	\item[(i)] If both $A$ and $B$  are clopen, then $Cent_{X\times Y}(A\times B)=A\times B$ and $rad_{X\times Y}(A\times B)=\infty,$
	\item[(ii)] If $A$ is clopen and $Cent_{Y}(B)=\emptyset$,
    then $Cent_{X\times Y}(A\times B)=\emptyset$ and $rad_{X\times Y}(A\times B)=\infty,$ and
	\item[(iii)] 
	 $Cent_{X}(A)= Cent_{Y}(B)=\emptyset$,
   then $Cent_{X \times Y}(A\times B)= \emptyset$ and $rad_{X \times Y}(A\times B)=\infty.$
		\end{itemize}
	\end{theorem}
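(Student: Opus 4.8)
The plan is to establish each of the three cases by reducing to results already proved for single subsets, using the explicit formula for the product metric $d((x_1,y_1),(x_2,y_2))=\max\{d_X(x_1,x_2),d_Y(y_1,y_2)\}$ together with the boundary formula $\partial_{X\times Y}(A\times B)=(\overline{A}\times\partial_Y(B))\cup(\partial_X(A)\times\overline{B})$ recorded in Section 2. For case (i), if $A$ and $B$ are both clopen then $A\times B$ is clopen in $X\times Y$, so $\partial_{X\times Y}(A\times B)=\emptyset$ and the conclusion is immediate from Lemma \ref{2.8}; alternatively it follows from the boundary formula since $\partial_X(A)=\partial_Y(B)=\emptyset$. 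This case is essentially a one-line argument.

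For case (ii), assume $A$ is clopen (so $\partial_X(A)=\emptyset$) and $Cent_Y(B)=\emptyset$. The boundary formula collapses to $\partial_{X\times Y}(A\times B)=\overline{A}\times\partial_Y(B)=A\times\partial_Y(B)$, using that $A$ is closed. The key computation is that for $(a,b)\in A\times B$,
\[
d\bigl((a,b),\partial_{X\times Y}(A\times B)\bigr)=\inf_{a'\in A,\ b'\in\partial_Y(B)}\max\{d_X(a,a'),d_Y(b,b')\}=d_Y(b,\partial_Y(B)),
\]
since taking $a'=a$ makes the first coordinate contribute $0$. Hence the distance-to-boundary function on $A\times B$ depends only on the $B$-coordinate and equals $d_Y(\,\cdot\,,\partial_Y(B))$; therefore a point $(a,b)$ maximizes it over $A\times B$ if and only if $b$ maximizes $d_Y(\,\cdot\,,\partial_Y(B))$ over $B$, i.e.\ iff $b\in Cent_Y(B)$. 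Since $Cent_Y(B)=\emptyset$, we conclude $Cent_{X\times Y}(A\times B)=\emptyset$, and then by definition $rad_{X\times Y}(A\times B)=\infty$. (One should first dispose of the degenerate sub-case $\partial_Y(B)=\emptyset$, i.e.\ $B$ clopen, which cannot happen here since then $Cent_Y(B)=B\neq\emptyset$ unless $B=\emptyset$; if $B=\emptyset$ then $A\times B=\emptyset$ and the statement holds trivially.)

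For case (iii), assume $Cent_X(A)=Cent_Y(B)=\emptyset$. Again first handle degenerate sub-cases: if either factor is empty the product is empty and we are done, and if $\partial_X(A)=\emptyset$ then $A$ is clopen, forcing $Cent_X(A)=A\neq\emptyset$, contradiction — so $\partial_X(A)\neq\emptyset$ and likewise $\partial_Y(B)\neq\emptyset$. The main step is to show the distance-to-boundary function on $A\times B$ is
\[
d\bigl((a,b),\partial_{X\times Y}(A\times B)\bigr)=\min\bigl\{d_X(a,\partial_X(A)),\,d_Y(b,\partial_Y(B))\bigr\},
\]
which comes from splitting the infimum over the two pieces $\overline{A}\times\partial_Y(B)$ and $\partial_X(A)\times\overline{B}$ of the boundary and using the $\max$ structure of $d$ (on the first piece one may take $a'=a$, getting $d_Y(b,\partial_Y(B))$; on the second, $b'=b$, getting $d_X(a,\partial_X(A))$; and $\max$ against a coordinate ranging over a dense set can only increase the value, so these are the infima). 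If this function attained a maximum at some $(a_0,b_0)$, then for all $(a,b)$, $\min\{d_X(a,\partial_X(A)),d_Y(b,\partial_Y(B))\}\le\min\{d_X(a_0,\partial_X(A)),d_Y(b_0,\partial_Y(B))\}$; specializing $b=b_0$ shows $d_X(a_0,\partial_X(A))\ge$ something, but to get a clean contradiction I would argue: since $Cent_X(A)=\emptyset$, pick $a_1\in A$ with $d_X(a_1,\partial_X(A))>d_X(a_0,\partial_X(A))$, and since $Cent_Y(B)=\emptyset$, pick $b_1\in B$ with $d_Y(b_1,\partial_Y(B))>d_Y(b_0,\partial_Y(B))$; then $(a_1,b_1)$ satisfies $\min\{d_X(a_1,\partial_X(A)),d_Y(b_1,\partial_Y(B))\}>\min\{d_X(a_0,\partial_X(A)),d_Y(b_0,\partial_Y(B))\}$, contradicting maximality. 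Hence $Cent_{X\times Y}(A\times B)=\emptyset$ and $rad_{X\times Y}(A\times B)=\infty$.

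The main obstacle is the distance-to-boundary computation in case (iii): one must carefully justify that the infimum over $\partial_X(A)\times\overline{B}$ of $\max\{d_X(a,a'),d_Y(b,b')\}$ equals $d_X(a,\partial_X(A))$ — the point being that letting $b'$ range over $\overline{B}$ one can take $b'$ arbitrarily close to $b$ (as $b\in B\subseteq\overline{B}$) so the second argument of the $\max$ contributes nothing in the limit, and symmetrically for the other piece — and then that the overall infimum is the minimum of the two. Everything else is bookkeeping with the boundary product formula and the definitions of center and radius.
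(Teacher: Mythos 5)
Your proposal is correct and follows essentially the same route as the paper: clopenness of $A\times B$ for (i), the collapse of the boundary formula and the identity $d((a,b),A\times\partial_Y(B))=d_Y(b,\partial_Y(B))$ for (ii), and the identity $d((a,b),\partial_{X\times Y}(A\times B))=\min\{d_X(a,\partial_X(A)),d_Y(b,\partial_Y(B))\}$ for (iii). Your contradiction in case (iii) — increasing both coordinates simultaneously to strictly increase the minimum — is in fact slightly more careful than the paper's case split on which coordinate attains the minimum, which glosses over the situation where both coordinates attain it.
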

	\begin{proof}
		($i$) If $A$ and $B$  are clopen then $A\times B$ is clopen, and hence $Cent_{X\times Y}(A\times B)=A\times B$ and $rad_{X\times Y}(A\times B)=\infty.$\\
		($ii$) As $A$ is clopen, $\partial_{X}(A)=\emptyset.$ 
		Recall that $\partial_{X\times Y}(A\times B)= (\overline{A}\times \partial_{Y}(B))\cup(\partial_{X}(A)\times \overline{B})$. In this case, $\partial_{X\times Y}(A\times B)= A\times \partial_{Y}(B).$ First, we observe that $d((a,b),A\times \partial_{Y}(B))= d_{Y}(b,\partial_{Y}(B)),\forall (a,b)\in A\times B.$\\ 
		We have $d((a,b),A\times \partial_{Y}(B))\leq d((a,b),(a,y))=\max\{d_{X}(a,a),d_{Y}(b,y)\}= d_{Y}(b,y),\\
		\forall y\in \partial_{Y}(B)\implies d((a,b),A\times \partial_{Y}(B))\leq d_{Y}(b,\partial_{Y}(B)).$ And, for $x\in A$ and $y\in \partial_{Y}(B),$ we have  $d_{Y}(b,\partial_{Y}(B))\leq d_{Y}(b,y)\leq \max\{d_{X}(a,x),d_{Y}(b,y)\}=d((a,b),(x,y))\\
		\implies d_{Y}(b,\partial_{Y}(B))\leq d((a,b),A\times \partial_{Y}(B)).$ Hence, $d((a,b),A\times \partial_{Y}(B))= d_{Y}(b,\partial_{Y}(B)),\\ \forall (a,b)\in A\times B.$ \par
		Suppose that $Cent_{X\times Y}(A\times B)\neq \emptyset.$  
		 For $(a,b)\in Cent_{X\times Y}(A\times B),$ we get $d_{Y}(b,\partial_{Y}(B))
		 =d((a,b),A\times \partial_{Y}(B))\geq d((a,b'), A\times \partial_{Y}(B))=d_{Y}(b',\partial_{Y}(B)),\forall b'\in B \implies b\in Cent_{Y}(B)$, a contradiction. So, $Cent_{X\times Y}(A\times B)=\emptyset$, and hence $rad_{X\times Y}(A\times B)=\infty.$\\
		($iii$) 
         Similarly, as in case ($ii$), we get
	$ d((a,b),\overline{A}\times \partial_{Y}(B))=d_{Y}(b,\partial_{Y}(B)),$ and $d((a,b),\partial_{X}(A)\times \overline{B})=d_{X}(a,\partial_{X}(A)),\forall (a,b)\in A\times B.$\par
	Suppose that $Cent_{X\times Y}(A\times B)\neq \emptyset.$ , For $(a,b)\in Cent_{X\times Y}(A\times B)\subseteq A\times B,$ we get  $d((a,b),\partial_{X\times Y}(A\times B))= \min\{d((a,b),(\partial_{X}(A) \times\overline{B})),d((a,b),(\overline{A}\times \partial_{Y}(B)))\} =\min\{d_{X}(a,\partial_{X}(A)),d_{Y}(b,\partial_{Y}(B))\}.$ If $d((a,b),\partial_{X\times Y}(A\times B))=d_{X}(a,\partial_{X}(A))$ then $a \in Cent_{X}(A),$ a contradiction, and if $d((a,b),\partial_{X\times Y}(A\times B))=d_{Y}(b,\partial_{Y}(B))$ then $b \in Cent_{Y}(B),$ again a contradiction. So, $Cent_{X\times Y}(A\times B)=\emptyset$, and hence $rad_{X\times Y}(A\times B)=\infty.$
	\end{proof}
	
		\begin{example}
		\begin{itemize}{\normalfont
				\item[(i)] Let $A=\{2,3\}\subseteq\mathbb{Z}$ and $B=[0,\infty)\subseteq\mathbb{R},$ where $\mathbb{Z}$ is discrete space and $\mathbb{R}$ is equipped with the usual metric. As $A$ is clopen in $\mathbb{Z}$ and  $Cent_{\mathbb{R}}(B)=\emptyset,$ by Theorem \ref{3.45}($ii$), we get  $Cent_{\mathbb{Z}\times\mathbb{R}}(A\times B)=\emptyset$ and $rad_{\mathbb{Z}\times\mathbb{R}}(A\times B)=\infty.$
				\item[(ii)] Let $Q=(0,\infty)\times (0,\infty)$ be the first quadrant in $\mathbb{R}^2$ with the maximum metric. Then by Theorem \ref{3.45}($iii$), we get $Cent_{\mathbb{R}^2}(Q)=\emptyset$ and $rad_{\mathbb{R}^2}(Q)=\infty.$}
		\end{itemize}
	\end{example}
	
	\begin{theorem}\label{3.43}
		Let $(X,d_{X})$ and $(Y,d_{Y})$ be two metric spaces. For $A \subseteq X$ and $B \subseteq Y$, let $\Hat{B}=\{b \in B| d_{Y}(b,\partial_{Y}(B)) \geq rad_{X}(A)\}$. If $rad_{X}(A)\leq rad_{Y}(B)$,  then $Cent_{X \times Y}(A\times B)= Cent_{X}(A) \times \Hat{B}$. 
	\end{theorem}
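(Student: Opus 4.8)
The plan is to reduce the whole statement to one distance formula. As in the proof of Theorem~\ref{3.45}, from $\partial_{X\times Y}(A\times B)=(\overline{A}\times\partial_Y(B))\cup(\partial_X(A)\times\overline{B})$ together with the maximum metric one gets, for every $(a,b)\in A\times B$,
\[
d\bigl((a,b),\partial_{X\times Y}(A\times B)\bigr)=\min\{\,d_X(a,\partial_X(A)),\ d_Y(b,\partial_Y(B))\,\}.
\]
Write $f(a)=d_X(a,\partial_X(A))$, $g(b)=d_Y(b,\partial_Y(B))$ and $r=rad_X(A)$, so that the problem becomes: identify the points of $A\times B$ maximizing $(a,b)\mapsto\min\{f(a),g(b)\}$. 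I would first dispose of the degenerate cases. If $A=\emptyset$ or $B=\emptyset$, both sides are empty. If $r=\infty$, then $rad_Y(B)=\infty$ too, so (by Lemma~\ref{2.8} and Theorem~\ref{3.6}) each of $A,B$ is clopen or has empty center; the situation then falls, after possibly exchanging $A$ and $B$, into one of the cases of Theorem~\ref{3.45}, and in each one a direct check matches $Cent_X(A)\times\widehat{B}$ to the value stated there (for instance, $A$ clopen and $B$ not forces $\widehat{B}=\emptyset$, and both sides are empty). So from now on I assume $A\neq\emptyset$, $B\neq\emptyset$ and $r<\infty$; then $A$ is nonclopen, $\partial_X(A)\neq\emptyset$, $Cent_X(A)\neq\emptyset$ by Theorem~\ref{3.6}, $r=\sup_{a\in A}f(a)$, and $f(a)=r$ exactly when $a\in Cent_X(A)$.

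For the inclusion $Cent_X(A)\times\widehat{B}\subseteq Cent_{X\times Y}(A\times B)$, I would take $a\in Cent_X(A)$ and $b\in\widehat{B}$: then $f(a)=r$ and $g(b)\geq r$, so $\min\{f(a),g(b)\}=r$, whereas $\min\{f(a'),g(b')\}\leq f(a')\leq r$ for every $(a',b')\in A\times B$; hence $(a,b)$ is a maximizer. This already shows that if $\widehat{B}\neq\emptyset$ then $rad_{X\times Y}(A\times B)=r$, and the reverse inclusion is then immediate: a maximizer $(a,b)$ satisfies $\min\{f(a),g(b)\}=r$, which forces $f(a)\geq r$ (hence $f(a)=r$, i.e.\ $a\in Cent_X(A)$) and $g(b)\geq r$ (i.e.\ $b\in\widehat{B}$).

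The remaining case is $\widehat{B}=\emptyset$, where I claim $Cent_{X\times Y}(A\times B)=\emptyset$; note that this case means $g(b)<r$ for all $b\in B$. Suppose for contradiction that $(a_0,b_0)$ maximizes $\min\{f,g\}$, with value $\rho\leq g(b_0)<r$. First, $rad_Y(B)$ cannot be finite: otherwise $B$ would be nonclopen, $Cent_Y(B)\neq\emptyset$ by Theorem~\ref{3.6}, and some $b$ would achieve $g(b)=rad_Y(B)\geq r$, contradicting $\widehat{B}=\emptyset$. So $rad_Y(B)=\infty$, $B$ is nonclopen (else $\widehat{B}=B\neq\emptyset$), and $Cent_Y(B)=\emptyset$, i.e.\ $g$ attains no largest value on $B$. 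Now fix $a^{*}\in Cent_X(A)$, so $f(a^{*})=r$: if $g(b_0)>\rho$, then $\min\{f(a^{*}),g(b_0)\}=g(b_0)>\rho$; if $g(b_0)=\rho$ but some $b_1$ has $g(b_1)>\rho$, then $\min\{f(a^{*}),g(b_1)\}=g(b_1)>\rho$; and if $g(b)\leq\rho$ for every $b$, then $g(b_0)=\rho=\sup_{b\in B}g(b)$ is attained at $b_0$, so $b_0\in Cent_Y(B)=\emptyset$, which is absurd. Each alternative contradicts the maximality of $(a_0,b_0)$, so there is no maximizer and $Cent_{X\times Y}(A\times B)=\emptyset=Cent_X(A)\times\widehat{B}$.

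The only genuinely delicate step, and the one I expect to be the main obstacle, is exactly this last case: it requires keeping careful track of which of the suprema of $f$ and $g$ are attained, and the key leverage is Theorem~\ref{3.6}, which converts ``the supremum of $g$ is attained'' into ``$rad_Y(B)$ is finite'' and so collides with the standing hypothesis $rad_X(A)\leq rad_Y(B)$.
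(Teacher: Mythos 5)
Your proof is correct and follows essentially the same route as the paper: the same reduction to $d((a,b),\partial_{X\times Y}(A\times B))=\min\{d_X(a,\partial_X(A)),d_Y(b,\partial_Y(B))\}$, the same dispatch of the infinite-radius cases to Theorem~\ref{3.45}, and the same split according to whether $\Hat{B}$ is empty (the paper phrases this as $rad_X(A)<Srad_Y(B)$ versus $rad_X(A)\geq Srad_Y(B)$), ending with a contradiction argument when $\Hat{B}=\emptyset$. Your trichotomy in that last case is in fact a slightly more careful version of the paper's argument, which glosses over the edge case $d_X(a,\partial_X(A))=d_Y(b,\partial_Y(B))$ when deciding which factor realizes the minimum.
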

	\begin{proof} First, let both $rad_{X}(A)$ and $rad_{Y}(B)$ be infinite. In this case, we have three possibilities. If $A$ and $B$ are clopen then $Cent_{X}(A)=A$ and $\Hat{B}=B$, and the result follows by Theorem \ref{3.45}($i$).
	And in other two cases either $\Hat{B}=\emptyset$ or $Cent_{X}(A)=\emptyset$, and the result  follows by Theorem \ref{3.45}($ii$) or Theorem \ref{3.45}($iii$).\par
   Now, WLOG suppose that $rad_{X}(A)$ is finite, then $Cent_{X}(A)$ is nonempty. If $rad_{Y}(B)$ is finite, then $Cent_{Y}(B)\neq\emptyset$ and $Cent_{Y}(B)\subseteq \Hat{B} \implies \Hat{B}\neq\emptyset$. If $rad_{Y}(B)=\infty$ such that $rad_{X}(A)<Srad_{Y}(B)$, then  $\Hat{B}\neq\emptyset$.\par 
   In both the above cases, we observe that if $(a,b)\in Cent_{X}(A)\times \Hat{B},$ then $d((a,b),\\ \partial_{X\times Y}
   (A \times B))=rad_{X}(A).$
    As $a\in Cent_{X}(A)$ and $b\in \Hat{B}$,  $d_{X}(a,\partial_{X}(A))=rad_{X}(A)$ and $d_{Y}(b,\partial_{Y}(B))\geq rad_{X}(A)\implies d((a,b),\partial_{X\times Y}(A\times B))=\min\{d_{X}(a,\partial_{X}(A)),\\ d_{Y}(b,\partial_{Y}(B))\}=rad_{X}(A) $.
    Next, we observe that if $(a,b)\notin (Cent_{X}(A)\times \Hat{B}),$ then $d((a,b),\partial_{X\times Y}(A \times B))<rad_{X}(A).$ If $(a,b)\notin(Cent_{X}(A)\times \Hat{B}),$ then we have either $a\notin Cent_{X}(A)$ or $b\notin \Hat{B}$. If $a\notin Cent_{X}(A)$, then $d_{X}(a,\partial_{X}(A))<rad_{X}(A)$. And, if $b\notin \Hat{B}$, then $d_{X}(b,\partial_{X}(A))<rad_{X}(A).$ This implies $d((a,b),\partial_{X\times Y}(A \times B))<rad_{X}(A).$ So, $Cent_{X \times Y}(A\times B)= Cent_{X}(A) \times \Hat{B}$.\par 
    Now, if $rad_{Y}(B)=\infty$ such that $rad_{X}(A)\geq Srad_{Y}(B)$, then $\Hat{B}=\emptyset 
    $. In this case, we prove that $Cent_{X\times Y}(A\times B)=\emptyset$. 
    Let $(a,b)\in Cent_{X\times Y}(A\times B)$. Then $ d((a,b),\partial_{X\times Y}(A \times B))=\min\{d_{X}(a,\partial_{X}(A)), d_{Y}(b,\partial_{Y}(B)\}$. If $d((a,b),\partial_{X\times Y}(A \times B))=d_{Y}(b,\partial_{Y}(B)$, then $b\in Cent_{Y}(B)$, a contradiction. So, $d((a,b),\partial_{X\times Y}(A \times B))=d_{X}(a,\partial_{X}(A)) \implies a\in Cent_{X}(A)$. So, we get $rad_{X}(A)=d_{X}(a,\partial_{X}(A))=d((a,b),\partial_{X\times Y}(A \times B))< d_{Y}(b,\partial_{Y}(B))\leq \sup_{b\in B}d_{Y}(b,\partial_{Y}(B))=Srad_{Y}(B)$, which is not the case. So, $Cent_{X\times Y}(A\times B)=\emptyset$.  Hence, our claim.
	\end{proof}
	
	
	\begin{remark}
		{\normalfont In the above Theorem,\begin{itemize}[noitemsep,nolistsep]
				\item[($i$)]if $rad_{X}(A) = rad_{Y}(B)$, then $\Hat{B}= Cent_{Y}(B).$ So, $Cent_{X \times Y}(A\times B)= Cent_{X}(A) \times Cent_{Y}(B)$.
				\item[($ii$)] if $rad_{X}(A)=0$, then $\Hat{B}=B$ and $Cent_{X}(A)=A$. So, $Cent_{X \times Y}(A\times B)=A\times B.$
				\item [($iii$)] if $rad_{Y}(B)\leq rad_{X}(A)$,  then $Cent_{X\times Y}(A\times B)= \Hat{A} \times Cent_{Y}(B)$, where $\Hat{A}=\{a \in A| d_{X}(a,\partial_{X}(A)) \geq rad_{Y}(B)\}$.
		\end{itemize}} 
	\end{remark}

	From above results, we get
	\begin{corollary}  Let $A$ and $B$ are subsets of  metric spaces $X$ and $Y$, respectively.  If the radii of $A$ and $B$ are either both finite or both infinite, then
		$rad_{X \times Y}(A \times B)=\min\{rad_{X}(A),rad_{Y}(B)\}.$
	
	\end{corollary}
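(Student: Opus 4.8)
The plan is to derive this as a corollary of Theorems \ref{3.43} and \ref{3.45}, treating separately the two alternatives permitted by the hypothesis.

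Suppose first that $rad_X(A)=rad_Y(B)=\infty$. Combining Lemma \ref{2.8} with Theorem \ref{3.6}, a subset of a metric space has infinite radius exactly when it is clopen or has empty center. Applying this to $A$ and to $B$, the pair $(A,B)$ falls --- after possibly swapping the two factors, which is legitimate because $(x,y)\mapsto(y,x)$ is an isometry $X\times Y\to Y\times X$ and one may invoke Theorem \ref{2.18} --- into one of the three configurations listed before Theorem \ref{3.45}: both clopen; one clopen with the other having empty center; or both with empty center. In each of these, Theorem \ref{3.45} gives $rad_{X\times Y}(A\times B)=\infty=\min\{rad_X(A),rad_Y(B)\}$.

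Now suppose $rad_X(A)$ and $rad_Y(B)$ are both finite. Using again the isometry $X\times Y\cong Y\times X$, I may assume $rad_X(A)\le rad_Y(B)$, so that $\min\{rad_X(A),rad_Y(B)\}=rad_X(A)$. Finiteness of the radii, via Theorem \ref{3.6}, forces $Cent_X(A)\neq\emptyset$ and $Cent_Y(B)\neq\emptyset$; in particular $Cent_Y(B)\subseteq\Hat B$ shows $\Hat B\neq\emptyset$, so Theorem \ref{3.43} applies and gives $Cent_{X\times Y}(A\times B)=Cent_X(A)\times\Hat B$, a nonempty set. Finally, for $(a,b)\in Cent_X(A)\times\Hat B$ one has $d_X(a,\partial_X(A))=rad_X(A)$ and $d_Y(b,\partial_Y(B))\ge rad_X(A)$; since $\partial_{X\times Y}(A\times B)=(\overline A\times\partial_Y(B))\cup(\partial_X(A)\times\overline B)$ and the product carries the max-metric, the distance from $(a,b)$ to this boundary equals $\min\{d_X(a,\partial_X(A)),d_Y(b,\partial_Y(B))\}=rad_X(A)$. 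Hence $rad_{X\times Y}(A\times B)=rad_X(A)=\min\{rad_X(A),rad_Y(B)\}$, as desired.

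I do not expect a genuine obstacle here: all the real work is contained in Theorems \ref{3.43} and \ref{3.45}, and the argument is just bookkeeping over the cases. The one point deserving attention is why the hypothesis cannot be weakened: it rules out exactly the mixed situation appearing at the end of the proof of Theorem \ref{3.43}, where $rad_X(A)$ is finite, $rad_Y(B)=\infty$ and $rad_X(A)\ge Srad_Y(B)$, so that $\Hat B=\emptyset$, the product has empty center, and $rad_{X\times Y}(A\times B)=\infty$ differs from $\min\{rad_X(A),rad_Y(B)\}=rad_X(A)$. The degenerate instances ($A$ or $B$ empty or the whole space) are clopen and so are absorbed into the infinite-radius case.
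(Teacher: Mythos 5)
Your proposal is correct and follows exactly the route the paper intends: the corollary is stated with ``From above results, we get,'' i.e.\ the infinite case is read off from Theorem \ref{3.45} and the finite case from Theorem \ref{3.43} together with the computation $d((a,b),\partial_{X\times Y}(A\times B))=\min\{d_{X}(a,\partial_{X}(A)),d_{Y}(b,\partial_{Y}(B))\}$ already carried out in its proof. Your explicit justification of the case reduction via the swap isometry and of why the mixed finite/infinite situation must be excluded is a faithful (and slightly more careful) elaboration of the same argument.
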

		Moreover, if $rad_{X}(A)<Srad_{Y}(B)$ or $rad_{Y}(B)<Srad_{X}(A)$, then the above result is also true. 
	
	\begin{example}
		{\normalfont Let $A$ and $B$ be subsets of the Euclidean space $\mathbb{R}$. \begin{itemize}
				\item[($i$)] If $A = B= [0,1]$ then $rad_{\mathbb{R}}(A)= rad_{\mathbb{R}}(B)$ and $\Hat{B}= Cent_{Y}(B)=\{\frac{1}{2}\}$. By Theorem \ref{3.43}, $Cent_{\mathbb{R}^{2}}(A\times B)=\{(\frac{1}{2},\frac{1}{2})\}$, and $rad_{\mathbb{R}^{2}}(A\times B)=\frac{1}{2}$.
				\item[($ii$)] If $A=[0,1]$ and $B=[0,5]$ then $rad_{\mathbb{R}}(A)\leq rad_{\mathbb{R}}(B)$ and $\Hat{B}= [0.5,4.5]$. Hence,  $Cent_{\mathbb{R}^{2}}(A\times B)= \{\frac{1}{2}\}\times[0.5,4.5]$ and $rad_{\mathbb{R}^{2}}(A\times B)=\frac{1}{2}$.
				\item[($iii$)] If $A=[0,1]$ and $B=[0,1]\cup [2,4]$ then $rad_{\mathbb{R}}(A)\leq rad_{\mathbb{R}}(B)$ and $\Hat{B}= \{\frac{1}{2}\}\cup [2.5,3.5]$. Hence, $Cent_{\mathbb{R}^{2}}(A\times B)= \{(\frac{1}{2},\frac{1}{2})\}\cup(\{\frac{1}{2}\}\times[2.5,3.5])$ and $rad_{\mathbb{R}^{2}}(A\times B)=\frac{1}{2}$.
		\end{itemize}}
	\end{example}
	
	
	
	\begin{example}\label{4.7.}
		{\normalfont Let $\mathbb{R}^{2}$ and $\mathbb{R}$ be the Euclidean spaces. By Theorem \ref{3.43}, we have
			\begin{itemize}
				\item[($i$)] 
				 the center and radius of the cylinder $\mathbb{S}^1\times I$ are $\mathbb{S}^1\times I$ and 0, respectively, where $I=[0,1]$.
				\item[($ii$)] for $\mathbb{D}^{2}\subseteq\mathbb{R}^{2}$ and $I\subseteq\mathbb{R}, rad_{\mathbb{R}^2}(\mathbb{D}^2)>rad_{\mathbb{R}}(I)$ and $\Hat{\mathbb{D}}^2=\{a\in \mathbb{D}^2|d_{\mathbb{R}^2}(a,\mathbb{S}^1)\geq \frac{1}{2}\}=\{a\in\mathbb{D}^2| |a|\leq\frac{1}{2}\}.$ So, $Cent_{\mathbb{R}^{3}}(\mathbb{D}^2\times I)=\{(a,\frac{1}{2})\in (\mathbb{D}^2\times I)|\,|a|\leq \frac{1}{2}\}$ and $rad_{\mathbb{R}^{3}}(\mathbb{D}^2\times I)=\frac{1}{2}$.
				\item[($iii$)] for $A=\mathbb{D}^{2}\subseteq\mathbb{R}^{2}, B=[0,4]\subseteq\mathbb{R},rad_{\mathbb{R}^2}(A)< rad_{\mathbb{R}}(B)$ and $\Hat{B}=[1,3]$. So, $Cent_{\mathbb{R}^{3}}(A\times B)=
				\{(0,0,b)|1\leq b\leq 3\}$ 
				and $rad_{\mathbb{R}^{3}}(A\times B)=1$.\\
		\end{itemize}}
	\end{example}
	\hspace{5mm}

	Next, we generalize Theorem \ref{3.43} for a finite product $\prod\limits_{i=1}^{n}A_{i}$ of subsets $A_{i}$ of metric spaces $X_{i}, 1\leq i\leq n.$
	 
	\begin{theorem}
		Let $(X_{i},d_{i})$ be metric spaces $,1\leq i\leq n,$ where $n\in \mathbb{N}.$ For $A_{i}\subseteq X_{i},1\leq i\leq n,$ let $\Hat{A_i}=\{a\in A_i|d_i(a,\partial_{X_i}(A_i))\geq\min\{rad_{X_j}(A_j)|1\leq j\leq n\}\}.$ Then $Cent_{\prod\limits_{1}^{n}X_i}(\prod\limits_{1}^{n}A_i)=\prod\limits_{1}^{n}\Hat{A}_i$. Moreover, if  the radii of $A_i$ are either all finite or all infinite, then  $rad_{\prod\limits_{1}^{n}X_i}(\prod\limits_{1}^{n}A_i)=\min\{rad_{X_i}(A_i)|1\leq i\leq n\}.$   
	\end{theorem}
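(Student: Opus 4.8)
The plan is to reduce everything to a single distance computation followed by an elementary optimization, in the spirit of the proofs of Theorems \ref{3.45} and \ref{3.43}. The key lemma I would establish first is that, for $\mathbf{a}=(a_1,\dots,a_n)\in\prod_{1}^{n}A_i$,
\[
d\left(\mathbf{a},\ \partial_{\prod_{1}^{n}X_i}\left(\prod_{1}^{n}A_i\right)\right)=\min_{1\le i\le n}d_i\left(a_i,\partial_{X_i}(A_i)\right),
\]
with the convention $d_i(a_i,\emptyset)=\infty$. This follows exactly as in the proof of Theorem \ref{3.45}: by the product-boundary formula recalled in Section 2, $\partial_{\prod_{1}^{n}X_i}(\prod_{1}^{n}A_i)$ is the union of the $n$ slabs $S_k=\overline{A}_1\times\cdots\times\partial_{X_k}(A_k)\times\cdots\times\overline{A}_n$, so the distance equals $\min_k d(\mathbf{a},S_k)$; and since the product carries the maximum metric and $a_j\in A_j\subseteq\overline{A}_j$ for all $j$, one gets $d(\mathbf{a},S_k)=d_k(a_k,\partial_{X_k}(A_k))$ by replacing the $k$-th coordinate with a near-optimal point of $\partial_{X_k}(A_k)$ while leaving the others fixed, the reverse inequality being immediate from the max metric. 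If $A_k$ is clopen then $S_k=\emptyset$, contributing $\infty$, consistent with the convention.

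Granting the lemma, $Cent_{\prod X_i}(\prod A_i)$ is the set of $\mathbf{a}\in\prod A_i$ on which $\mathbf{a}\mapsto\min_i d_i(a_i,\partial_{X_i}(A_i))$ attains its supremum over $\prod A_i$, that supremum being $rad_{\prod X_i}(\prod A_i)$, and $Cent_{\prod X_i}(\prod A_i)=\emptyset$ precisely when the supremum is not attained. Write $m=\min_j rad_{X_j}(A_j)$. I would then show that $\prod_{1}^{n}\Hat{A}_i$ is either empty or equal to this argmax. Suppose $\prod_{1}^{n}\Hat{A}_i\neq\emptyset$. For $\mathbf{a}\in\prod\Hat{A}_i$ one has $d_i(a_i,\partial_{X_i}(A_i))\ge m$ for every $i$, hence $\min_i\ge m$; conversely, choosing $i_0$ with $rad_{X_{i_0}}(A_{i_0})=m$ forces $\min_i\le m$ as well (when $m<\infty$ the set $A_{i_0}$ is nonclopen, so its center is nonempty by Theorem \ref{3.6} and $d_{i_0}(a_{i_0},\partial_{X_{i_0}}(A_{i_0}))\le rad_{X_{i_0}}(A_{i_0})=m$; when $m=\infty$, non-emptiness of each $\Hat{A}_i$ forces every $A_i$ to be clopen, so $\prod A_i$ is clopen and the claim is immediate). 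For $\mathbf{a}\notin\prod\Hat{A}_i$ some coordinate has $d_i(a_i,\partial_{X_i}(A_i))<m$, so $\min_i<m$. Thus the supremum equals $m$ and is attained exactly on $\prod\Hat{A}_i$, giving $Cent_{\prod X_i}(\prod A_i)=\prod_{1}^{n}\Hat{A}_i$ and $rad_{\prod X_i}(\prod A_i)=m$.

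If instead $\prod_{1}^{n}\Hat{A}_i=\emptyset$, say $\Hat{A}_{i_1}=\emptyset$, then $d_{i_1}(a,\partial_{X_{i_1}}(A_{i_1}))<m$ for all $a\in A_{i_1}$, and I would derive $Cent_{\prod X_i}(\prod A_i)=\emptyset$ by an improvement argument: given any $\mathbf{a}^{\ast}\in\prod A_i$, put $r=\min_i d_i(a_i^{\ast},\partial_{X_i}(A_i))$; then $r<m\le rad_{X_j}(A_j)$ for every $j$, so in each coordinate one can pick $a_j'$ with $d_j(a_j',\partial_{X_j}(A_j))>r$ — using a point of $Cent_{X_j}(A_j)$ when $rad_{X_j}(A_j)$ is finite, any point when $A_j$ is clopen, and, when $A_j$ is nonclopen with infinite radius, the fact (Theorem \ref{3.6}) that $Cent_{X_j}(A_j)=\emptyset$, so $Srad_{X_j}(A_j)$ is not attained and therefore $Srad_{X_j}(A_j)>d_j(a_j^{\ast},\partial_{X_j}(A_j))\ge r$. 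The point $\mathbf{a}'$ beats $\mathbf{a}^{\ast}$, so no maximizer exists and $Cent_{\prod X_i}(\prod A_i)=\emptyset=\prod_{1}^{n}\Hat{A}_i$. Finally, for the ``moreover'' clause: if all $rad_{X_i}(A_i)$ are finite then each $\Hat{A}_i\supseteq Cent_{X_i}(A_i)\neq\emptyset$, so the first case above applies and $rad=\min_i rad_{X_i}(A_i)$; if all are infinite then either every $A_i$ is clopen, whence $\prod A_i$ is clopen and $rad=\infty$, or $\prod\Hat{A}_i=\emptyset$, whence $Cent=\emptyset$ and again $rad=\infty=\min_i rad_{X_i}(A_i)$. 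An alternative, shorter-to-state route is induction on $n$: regroup $\prod_{1}^{n}X_i$ as $(\prod_{1}^{n-1}X_i)\times X_n$ and invoke Theorem \ref{3.43} with the remark following it; but this still needs the distance lemma to identify the ``$\Hat{B}$'' produced by the two-factor theorem with $\prod_{1}^{n-1}\Hat{A}_i$, and in fully mixed situations (some radii finite, some infinite) the inductive hypothesis does not supply $rad_{\prod_{1}^{n-1}X_i}(\prod_{1}^{n-1}A_i)$, so the direct argument is cleaner. The main obstacle in either approach is the bookkeeping of the ``degenerate'' coordinates — those $A_i$ that are clopen, or that have infinite radius but finite semi-radius — since precisely there $\prod\Hat{A}_i$ can collapse to $\emptyset$ and the radius formula can fail outside the stated hypothesis; the right tool for handling them is the characterization that a nonclopen set with infinite radius has empty center and hence never attains its semi-radius.
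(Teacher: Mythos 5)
Your proposal is correct, but it takes a genuinely different route from the paper. The paper proves this theorem by induction on $n$: it regroups $\prod_{1}^{k+1}X_i$ as $(\prod_{1}^{k}X_i)\times X_{k+1}$, invokes Theorem \ref{3.43} in the two cases $rad(B)\leq rad(A_{k+1})$ and $rad(A_{k+1})\leq rad(B)$, and then spends most of its effort verifying that the set $\Hat{B}$ produced by the two-factor theorem equals $\prod_{1}^{k}\Hat{A}_i$ --- a verification that rests on exactly the distance identity $d\bigl(\mathbf{a},\partial_{\prod X_i}(\prod A_i)\bigr)=\min_i d_i\bigl(a_i,\partial_{X_i}(A_i)\bigr)$ that you isolate up front as your key lemma. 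Your direct argument treats all coordinates symmetrically, characterizes the center as the argmax of $\mathbf{a}\mapsto\min_i d_i(a_i,\partial_{X_i}(A_i))$, and handles the degenerate coordinates (clopen factors, and nonclopen factors of infinite radius whose semi-radius is never attained) explicitly through the improvement argument; this is a real gain, since the paper's inductive step quotes the radius formula for $B=\prod_{1}^{k}A_i$ from the inductive hypothesis even though that formula is only asserted under the ``all finite or all infinite'' hypothesis, so the mixed-radius cases are glossed over there, whereas your Theorem \ref{3.6}/semi-radius analysis covers them. What the paper's induction buys is brevity by reuse of Theorem \ref{3.45} and Theorem \ref{3.43}; what your approach buys is a self-contained proof in which the one nontrivial ingredient (the max-metric distance to the union of boundary slabs) is stated once and the rest is elementary optimization.
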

	
	\begin{proof}We prove by induction. If $n=2$ and $rad_{X_1}(A_1)\leq rad_{X_2}(A_2)$ then $\Hat{A}_1=Cent_{X_1}(A_1)$ and  it is true by Theorem \ref{3.43}.\par 
       Assume that it is true for some $k\in \mathbb{N}$. Let $B= \prod\limits_{1}^{k}A_i$. By Induction hypothesis, we have $Cent_{\prod\limits_{1}^{k}X_i}(B)=\prod\limits_{1}^{k}\Hat{A}_i$ and $rad_{\prod\limits_{1}^{k}X_i}(B)=\min\{rad_{X_i}(A_i)|1\leq i\leq k\}.$\par 
       Now, we prove it for $k+1$.\par	
		 If $rad_{\prod\limits_{1}^{k}X_i}(B)\leq rad_{X_{k+1}}(A_{k+1}),$ then by Theorem \ref{3.43}, we have $Cent_{\prod\limits_{1}^{k+1}X_i}(\prod\limits_{1}^{k+1}A_i) =Cent_{\prod\limits_{1}^{k+1}X_i}(B\times A_{k+1})= (Cent_{\prod\limits_{1}^{k}X_i}B)\times \Hat{A}_{k+1}= 
		 \prod\limits_{1}^{k+1}\Hat{A}_i ,$ and $rad_{\prod\limits_{1}^{k+1}X_i}(\prod\limits_{1}^{k+1}A_i) =rad_{\prod\limits_{1}^{k+1}X_i}(B\times A_{k+1})= rad_{\prod\limits_{1}^{k}X_i}(B)=\min\{rad_{X_i}(A_i)|1\leq i\leq k\}=\min\{rad_{X_i}(A_i)|1\leq i\leq k+1\}$. \par 
		If $rad_{X_{k+1}}(A_{k+1})\leq rad_{\prod\limits_{1}^{k}X_i}(B)$, then by Theorem \ref{3.43}, we have $Cent_{\prod\limits_{1}^{k+1}X_i}(\prod\limits_{1}^{k+1}A_i) 
		= \Hat{B}\times Cent_{X_{k+1}}(A_{k+1})=  \Hat{B}\times \Hat{A}_{k+1},$ and  $rad_{\prod\limits_{1}^{k+1}X_i}(\prod\limits_{1}^{k+1}A_i) =rad_{\prod\limits_{1}^{k+1}X_i}(B\times A_{k+1})= rad_{X_{k+1}}(A_{k+1})=\min\{rad_{X_i}(A_i)|1\leq i\leq k+1\}.$ Next, we observe that  $\Hat{B}=\prod\limits_{1}^{k}\Hat{A}_i$.
		We have $\Hat{B}=\{b\in B|d(b,\partial_{\prod\limits_{1}^{k}X_i}(B))\geq rad_{X_{k+1}}(A_{k+1})\}.$ It is easy to see that $d(a,\overline{A}_1\times \overline{A}_2\times...\partial_{X_i}(A_i)\times...\times \overline{A}_k)=d_i(a_i,\partial_{X_i}(A_i)),\forall a=(a_1,a_2,...,a_k)\in \prod\limits_{1}^{k}A_i$. Note that  $d(b,\partial_{\prod\limits_{1}^{k}X_i}(B))=d(b,\bigcup\limits_{i=1}^{k}(\overline{A}_1\times \overline{A}_2\times...\times\partial_{X_i}(A_i)\times...\times\overline{A}_k))=\min\{d(b,(\overline{A}_1\times \overline{A}_2\times...\times\partial_{X_i}(A_i)\times...\overline{A}_k))|1\leq i\leq k\}=\min\{d_i(b,\partial_{X_i}(A_{i}))|1\leq i\leq k\},\forall b\in B.$
		Now, $b=(b_1,b_2,...,b_k)\in \Hat{B} \iff d(b,\partial_{\prod\limits_{1}^{k}X_i}(B))\geq rad_{X_{k+1}}(A_{k+1}) \iff \min\{d_i(b_i,\partial_{X_i}(A_{i}))|1\leq i\leq k\} \geq rad_{X_{k+1}}(A_{k+1}) \iff d_i(b_i,\partial_{X_i}(A_{i})) \geq rad_{X_{k+1}}(A_{k+1}),\\  1\leq i\leq k \iff b_i\in \Hat{A_i}, 1\leq i\leq k \iff b=(b_1,b_2,...,b_k)\in \prod\limits_{1}^{k}\Hat{A_i}.$\\ Thus, $\Hat{B}=\prod\limits_{1}^{k}\Hat{A}_i$. 
		 So, $Cent_{\prod\limits_{1}^{k+1}X_i}(\prod\limits_{1}^{k+1}A_i) = 
		 \prod\limits_{1}^{k}\Hat{A}_i\times \Hat{A}_{k+1}=\prod\limits_{1}^{k+1}\Hat{A}_i.$ Hence, our claim.
	\end{proof}
	
\section{\textbf{Center and Radius of a finite union of subsets of a metric space}}
We know that \cite{Robert} if $A$ and $B$ are subsets of a metric space $(X,d_X)$ such that $A \cap B = \emptyset$, then we have $diam_{X}(A \cup B) \leq diam_{X}(A) + diam_{X}(B) + d_X(A,B)$, where $diam_{X}(A)$ denotes the diameter of $A$.\par 
 In this section, we determine $Cent_{X}(A \cup B)$ and  $rad_{X}(A \cup B)$ for nonclopen subsets $A$ and $B$ of a metric space $X$. \par 
For nonclopen subsets $A$ and $B$ of a metric space $X$, let\par 
\hspace{2mm} $\Tilde{A}=\{a\in Cent_{X}(A)|
d_{X}(a, \partial_{X}(B))<rad_{X}(A)\}$, and\par 
\hspace{2mm} $\Tilde{B}=\{b\in Cent_{X}(B)|d_{X}(b,\partial_{X}(A))<rad_{X}(B)\}.$\\
Using these notations, we have the following results:

\begin{theorem}\label{3.31}
	Let A and B be  nonclopen separated subsets of a metric space $(X,d_X).$ Then,
	\begin{itemize}
		\item[(i)]\label{3.33(i)} if $rad_{X}(A)>rad_{X}(B)$ and $Cent_{X}(A)\backslash\Tilde{A}\neq\emptyset,$ then $Cent_{X}(A\cup B)=Cent_{X}(A)\backslash\Tilde{A}$ $\&$ $rad_{X}(A\cup B)=rad_{X}(A),$ and
		\item[(ii)]\label{3.33(ii)} if $rad_{X}(A)=rad_{X}(B)$ and $(Cent_{X}(A)\backslash\Tilde{A})\cup (Cent_{X}(B)\backslash\Tilde{B})\neq\emptyset,$ then $Cent_{X}(A\cup B)=(Cent_{X}(A)\backslash\Tilde{A})\cup (Cent_{X}(B)\backslash\Tilde{B})$ and  $rad_{X}(A\cup B)=rad_{X}(A)=rad_{X}(B).$
	\end{itemize} 
\end{theorem}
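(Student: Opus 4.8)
The plan is to work directly from the definitions, using the key structural fact that for separated subsets $\partial_X(A\cup B)=\partial_X(A)\cup\partial_X(B)$ (listed in Section 2). The whole argument hinges on computing, for a point $a\in A$, the distance $d_X(a,\partial_X(A\cup B))=\min\{d_X(a,\partial_X(A)),d_X(a,\partial_X(B))\}$, and symmetrically for $b\in B$. So first I would record this min-formula, note that $A$ and $B$ are separated so $A\cup B = A\sqcup B$ as a set, and observe that every point of $A\cup B$ lies in exactly one of $A$, $B$; hence $\sup_{x\in A\cup B}d_X(x,\partial_X(A\cup B))$ is the max of the two suprema over $A$ and over $B$ separately.

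For part (i), assume $rad_X(A)>rad_X(B)$ and $Cent_X(A)\setminus\tilde A\neq\emptyset$. I would argue in two directions. First, for $a\in Cent_X(A)\setminus\tilde A$: since $a\in Cent_X(A)$ we have $d_X(a,\partial_X(A))=rad_X(A)$, and since $a\notin\tilde A$ we have $d_X(a,\partial_X(B))\geq rad_X(A)$; therefore $d_X(a,\partial_X(A\cup B))=\min\{rad_X(A),\,\geq rad_X(A)\}=rad_X(A)$. Second, I claim no other point of $A\cup B$ achieves distance $\geq rad_X(A)$ from $\partial_X(A\cup B)$: for $b\in B$, $d_X(b,\partial_X(A\cup B))\leq d_X(b,\partial_X(B))\leq rad_X(B)<rad_X(A)$ (here using that $rad_X(B)$ is finite — which follows because $B$ is nonclopen and $Cent_X(B)\neq\emptyset$; I should make sure $Cent_X(B)\neq\emptyset$ is forced, and indeed if $rad_X(B)=\infty$ then $rad_X(B)>rad_X(A)$ would be needed to even state the hypothesis consistently, so $rad_X(A)>rad_X(B)$ forces $rad_X(B)$ finite by Theorem \ref{3.6}); for $a\in A$ with $d_X(a,\partial_X(A))<rad_X(A)$ clearly $d_X(a,\partial_X(A\cup B))<rad_X(A)$; and for $a\in\tilde A$, $d_X(a,\partial_X(B))<rad_X(A)$ gives $d_X(a,\partial_X(A\cup B))<rad_X(A)$. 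So the supremum over $A\cup B$ equals $rad_X(A)$, it is attained precisely on $Cent_X(A)\setminus\tilde A$, which is nonempty by hypothesis, hence $rad_X(A\cup B)=rad_X(A)$ and $Cent_X(A\cup B)=Cent_X(A)\setminus\tilde A$.

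For part (ii), with $rad_X(A)=rad_X(B)=:r$ (again finite, since the nonemptiness hypothesis together with Theorem \ref{3.6} rules out $r=\infty$; I should spell this out) and $(Cent_X(A)\setminus\tilde A)\cup(Cent_X(B)\setminus\tilde B)\neq\emptyset$, the argument is the symmetric version of (i): a point $a\in Cent_X(A)$ satisfies $d_X(a,\partial_X(A\cup B))=\min\{r,d_X(a,\partial_X(B))\}$, which equals $r$ iff $d_X(a,\partial_X(B))\geq r$, i.e. iff $a\notin\tilde A$; similarly for $b\in Cent_X(B)$; and any point of $A$ (resp. $B$) not in $Cent_X(A)$ (resp. $Cent_X(B)$) has distance $<r$ from its own boundary, hence $<r$ from $\partial_X(A\cup B)$. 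Thus the set of maximizers is exactly $(Cent_X(A)\setminus\tilde A)\cup(Cent_X(B)\setminus\tilde B)$, which is nonempty, so $rad_X(A\cup B)=r$ and the center formula follows.

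The main obstacle, I expect, is bookkeeping rather than depth: one must be careful that the hypotheses force the relevant radii to be finite so that the "center equals set of maximizers" dictionary (Theorem \ref{3.6} and the definition of $rad$) applies, and one must handle the edge case where, say, $d_X(a,\partial_X(B))$ could in principle be infinite (when $B$ has empty boundary — but $B$ is nonclopen so $\partial_X(B)\neq\emptyset$, which rules that out). I would also double-check that separatedness is used exactly where needed, namely to get $\partial_X(A\cup B)=\partial_X(A)\cup\partial_X(B)$ and the set-theoretic disjointness that lets the supremum split; no stronger hypothesis should be required.
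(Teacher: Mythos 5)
Your proposal is correct and follows essentially the same route as the paper's proof: use $\partial_X(A\cup B)=\partial_X(A)\cup\partial_X(B)$ for separated sets to get the min-formula, show points of $Cent_X(A)\setminus\Tilde{A}$ (resp.\ the symmetric union in (ii)) realize distance exactly $rad_X(A)$, and show every other point of $A\cup B$ realizes a strictly smaller distance. Your extra bookkeeping on finiteness of the radii via Theorem \ref{3.6} is a point the paper leaves implicit, but it does not change the argument.
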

\begin{proof} ($i$) 
	For $a\in Cent_{X}(A)\backslash\Tilde{A},$ we have $d_{X}(a,\partial_{X}(B))\geq rad_{X}(A).$ Consequently, $d_{X}(a,\partial_{X}(A\cup B))= \min\{d_{X}(a,\partial_{X}(A)),d_{X}(a,\partial_{X}(B))\}= d_{X}(a,\partial_{X}(A))=rad_{X}(A).$ If $a\notin Cent_{X}(A)\backslash\Tilde{A}$, then either $a\in \Tilde{A}$ or $a\notin Cent_{X}(A)$. If $a\in \Tilde{A},$ then $d_{X}(a,\partial_{X}(B))<rad_{X}(A).$ So, $d_{X}(a,\partial_{X}(A\cup B))\leq d_{X}(a,\partial_{X}(B))<rad_{X}(A).$ If $a\in A$ such that $a\notin Cent_{X}(A)$ then $d_{X}(a,\partial_{X}(A)) <rad_{X}(A)$. So, $d_{X}(a,\partial_{X}(A\cup B))\leq d_{X}(a,\partial_{X}(A))<rad_{X}(A).$ And if $a\in B$ then $d_{X}(a,\partial_{X}(A\cup B))\leq d_{X}(a,\partial_{X}(B))\leq rad_{X}(B)< rad_{X}(A).$ So, for $a\notin Cent_{X}(A)\backslash\Tilde{A},$ we have $d_{X}(a,\partial_{X}(A\cup B))< rad_{X}(A).$ Thus, $Cent_{X}(A\cup B)=Cent_{X}(A)\backslash\Tilde{A}$ and $rad_{X}(A\cup B)=rad_{X}(A).$\\
	$(ii)$ 
	Similarly, for $a\in (Cent_{X}(A)\backslash \Tilde{A})\cup(Cent_{X}(B)\backslash \Tilde{B}),$ we get $d_{X}(a,\partial_{X}(A\cup B))=rad_{X}(A).$ And, for $a\notin (Cent_{X}(A)\backslash \Tilde{A})\cup(Cent_{X}(B)\backslash \Tilde{B}),$ we get $d_{X}(a,\partial_{X}(A\cup B))<rad_{X}(A).$ Thus, $Cent_{X}(A\cup B)=(Cent_{X}(A)\backslash\Tilde{A}) \cup (Cent_{X}(B)\backslash\Tilde{B})$ and $rad_{X}(A\cup B)=rad_{X}(A).$
\end{proof}

Next, we derive relationship of $Srad_{X}(A\cup B)$ with $rad_{X}(A)$ $\&$ $rad_{X}(B).$

\begin{theorem}\label{5.1.}
	Let A and B be  nonclopen separated subsets of a metric space $(X,d_X).$ 
	 Then $Srad_{X}(A\cup B)\leq \max\{rad_{X}(A),rad_{X}(B)\}.$ 
\end{theorem}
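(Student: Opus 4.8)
The plan is to show that for every point $x \in A \cup B$, the distance $d_X(x, \partial_X(A \cup B))$ does not exceed $\max\{rad_X(A), rad_X(B)\}$, from which the claim follows by taking the supremum over all such $x$. Since $A$ and $B$ are separated, we may use the fact recalled in Section 2 that $\partial_X(A \cup B) = \partial_X(A) \cup \partial_X(B)$. The argument splits into the two cases $x \in A$ and $x \in B$, which are symmetric, so I would only write out $x \in A$ in detail.

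For $x \in A$, I would first dispose of the degenerate situation: if $rad_X(A) = \infty$ then the inequality $d_X(x, \partial_X(A\cup B)) \le \max\{rad_X(A), rad_X(B)\}$ holds trivially, and similarly if $rad_X(B) = \infty$. So assume both radii are finite; by Theorem \ref{3.6} (both $A$ and $B$ are nonclopen) this means $Cent_X(A)$ and $Cent_X(B)$ are nonempty, and moreover $rad_X(A) = Srad_X(A) = \sup_{a \in A} d_X(a, \partial_X(A))$. Now for $x \in A$ we have
\[
d_X(x, \partial_X(A \cup B)) = \min\{ d_X(x, \partial_X(A)),\, d_X(x, \partial_X(B)) \} \le d_X(x, \partial_X(A)) \le Srad_X(A) = rad_X(A) \le \max\{rad_X(A), rad_X(B)\}.
\]
The case $x \in B$ is identical with the roles of $A$ and $B$ interchanged, giving $d_X(x, \partial_X(A \cup B)) \le rad_X(B) \le \max\{rad_X(A), rad_X(B)\}$. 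Taking the supremum over $x \in A \cup B$ yields $Srad_X(A \cup B) \le \max\{rad_X(A), rad_X(B)\}$.

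The only genuinely delicate point is making sure the reduction "finite radius $\Rightarrow$ $rad_X = Srad_X$" is legitimate: this is exactly Theorem \ref{3.6} combined with the observation (made right before the definition of $Srad_X$) that when $Cent_X(A) \neq \emptyset$ one has $rad_X(A) = \sup_{a\in A} d_X(a, \partial_X(A))$. Everything else is just the identity $\partial_X(A\cup B) = \partial_X(A)\cup\partial_X(B)$ for separated sets together with the elementary bound $d_X(x, S \cup T) = \min\{d_X(x,S), d_X(x,T)\} \le d_X(x,S)$. So I do not anticipate a real obstacle; the main thing to be careful about is handling the infinite-radius cases up front so that the final chain of inequalities is valid.
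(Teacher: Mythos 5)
Your proposal is correct and follows essentially the same route as the paper: both rest on the identity $\partial_X(A\cup B)=\partial_X(A)\cup\partial_X(B)$ for separated sets and the chain $d_X(x,\partial_X(A\cup B))\leq d_X(x,\partial_X(A))\leq rad_X(A)$ for $x\in A$ (and symmetrically for $B$), followed by taking the supremum. The paper simply invokes the already-noted inequality $Srad_X(A)\leq rad_X(A)$ in place of your explicit finite/infinite case split, so your extra care is harmless but not needed.
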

\begin{proof} As $A$ and $B$ are separated, 
	 $ \partial_{X}(A \cup B)= \partial_{X}(A) \cup \partial_{X}(B)$. So, for $a \in A,$ we have $d_{X}(a,\partial_{X}(A\cup B))\leq d_{X}(a,\partial_{X}(A))\leq rad_{X}(A).$ Similarly, for $b \in B,$ we have $ d_{X}(b,\partial_{X}(A\cup B))\leq d_{X}(b,\partial_{X}(B))\leq rad_{X}(B).$ This implies that $Srad_{X}(A\cup B)\leq \max\{rad_{X}(A),rad_{X}(B)\}.$
\end{proof}


\begin{theorem}\label{4.2}
		Let A and B be  nonclopen separated subsets of a metric space $(X,d_X).$ Then,
	\begin{itemize}
		\item[(i)]\label{3.31(ii)} if $rad_{X}(B)<rad_{X}(A)<\infty$ and $Cent_{X}(A)\backslash\Tilde{A}=\emptyset,$ then $Srad_{X}(A\cup B)<rad_{X}(A),$ and
		\item[(ii)]\label{3.31(iv)}  if $rad_{X}(A)=rad_{X}(B)<\infty$ and $(Cent_{X}(A)\backslash\Tilde{A})\cup (Cent_{X}(B)\backslash\Tilde{B})=\emptyset,$ then $Srad_{X}(A\cup B)< rad_{X}(A)=rad_{X}(B).$ 
	\end{itemize} 
	
\end{theorem}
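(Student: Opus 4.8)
The plan is to prove both parts by contradiction, leveraging Theorem \ref{5.1.} to get the nonstrict inequality $Srad_{X}(A\cup B)\leq\max\{rad_{X}(A),rad_{X}(B)\}=rad_{X}(A)$, and then showing that equality cannot be attained. Since $A$ and $B$ are separated, $\partial_{X}(A\cup B)=\partial_{X}(A)\cup\partial_{X}(B)$, so for every $x\in A\cup B$ we have $d_{X}(x,\partial_{X}(A\cup B))=\min\{d_{X}(x,\partial_{X}(A)),d_{X}(x,\partial_{X}(B))\}$.

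For part (i): suppose for contradiction that $Srad_{X}(A\cup B)=rad_{X}(A)$. The key point is that, because $rad_{X}(A)<\infty$, the supremum defining $Srad_{X}$ is governed by the distance to $\partial_{X}(A)$ on $A$ and to $\partial_{X}(B)$ on $B$, and since $rad_{X}(B)<rad_{X}(A)$, no point of $B$ can come close to realizing the value $rad_{X}(A)$; more precisely $d_{X}(b,\partial_{X}(A\cup B))\leq d_{X}(b,\partial_{X}(B))\leq rad_{X}(B)<rad_{X}(A)$ for all $b\in B$, and likewise any $a\in A\setminus Cent_{X}(A)$ satisfies $d_{X}(a,\partial_{X}(A))<rad_{X}(A)$. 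Hence the only candidates for points $x$ with $d_{X}(x,\partial_{X}(A\cup B))$ equal to or approaching $rad_{X}(A)$ lie in $Cent_{X}(A)$. But for $a\in Cent_{X}(A)$, the hypothesis $Cent_{X}(A)\setminus\Tilde{A}=\emptyset$ forces $a\in\Tilde{A}$, i.e. $d_{X}(a,\partial_{X}(B))<rad_{X}(A)$, so $d_{X}(a,\partial_{X}(A\cup B))\leq d_{X}(a,\partial_{X}(B))<rad_{X}(A)$. Thus \emph{every} point of $A\cup B$ has distance to $\partial_{X}(A\cup B)$ strictly less than $rad_{X}(A)$. Since $rad_{X}(A)<\infty$ and $Srad_{X}(A\cup B)$ is a supremum of such strictly-smaller values, I need to argue the supremum is itself strictly smaller; this is the step requiring care (a supremum of strictly-smaller values need not be strictly smaller). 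I would handle it by showing the supremum is actually attained: if $Srad_{X}(A\cup B)=rad_{X}(A)$ were a limit not attained, take a sequence $x_n\in A\cup B$ with $d_{X}(x_n,\partial_{X}(A\cup B))\to rad_{X}(A)$; infinitely many $x_n$ lie in $A$ (those in $B$ are bounded below $rad_{X}(B)$), and for those $d_{X}(x_n,\partial_{X}(A))\to rad_{X}(A)$, so $x_n$ is eventually in $Cent_{X}(A)$ only if the sup is attained—otherwise $d_{X}(x_n,\partial_{X}(A))<rad_{X}(A)$ but $\to rad_{X}(A)$, yet then the constraint $d_{X}(x_n,\partial_{X}(B))<rad_{X}(A)$ need not hold. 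The clean resolution: argue that if $d_{X}(x_n,\partial_{X}(A))\to rad_{X}(A)=\sup_{a\in A}d_X(a,\partial_X(A))$ with $rad_X(A)$ attained (which it is, since $Cent_X(A)\neq\emptyset$ as $rad_X(A)<\infty$), one cannot also have all these points avoid a fixed neighborhood of $Cent_X(A)$ unless $Cent_X(A)$'s distance-value is isolated; instead I will directly produce the bound: set $r=rad_X(A)$ and note $A\cup B=(Cent_X(A))\cup(A\setminus Cent_X(A))\cup B$; on the first piece $d_X(\cdot,\partial_X(A\cup B))<r$, and on the union of the latter two the supremum of $d_X(\cdot,\partial_X(A\cup B))$ is at most $\max\{Srad \text{ over } A\setminus Cent_X(A),\ rad_X(B)\}$. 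The genuinely delicate subcase is $\sup_{a\in A\setminus Cent_X(A)}d_X(a,\partial_X(A))=r$; here I use that $A$ and $B$ are separated and $Cent_X(A)=\Tilde A$ to extract a contradiction via a limit argument using the continuity of $a\mapsto d_X(a,\partial_X(B))$ along a sequence approaching $Cent_X(A)$.

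For part (ii): the argument is parallel but symmetric in $A$ and $B$. Suppose $Srad_{X}(A\cup B)=rad_{X}(A)=rad_{X}(B)=:r$. By the same reasoning, any point $x\in A\cup B$ with $d_{X}(x,\partial_{X}(A\cup B))=r$ must lie in $Cent_{X}(A)\cup Cent_{X}(B)$ (a point of $A$ not in $Cent_X(A)$ has $d_X$ to $\partial_X(A)$, hence to $\partial_X(A\cup B)$, strictly below $r$, similarly for $B$). But the hypothesis $(Cent_{X}(A)\setminus\Tilde{A})\cup(Cent_{X}(B)\setminus\Tilde{B})=\emptyset$ means $Cent_X(A)=\Tilde A$ and $Cent_X(B)=\Tilde B$, so for $a\in Cent_X(A)$ we get $d_X(a,\partial_X(B))<r$ hence $d_X(a,\partial_X(A\cup B))<r$, and symmetrically for $b\in Cent_X(B)$. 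So again every point of $A\cup B$ has distance $<r$ to the boundary, and the same attainment/limit analysis as in (i) upgrades this to $Srad_X(A\cup B)<r$.

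The main obstacle is precisely this last upgrade from "every point has value strictly less than $r$" to "the supremum is strictly less than $r$," which is false in general for suprema and must be closed using the structural finiteness ($rad_X(A)<\infty$ so $Cent_X(A)$ is nonempty and the value $r$ is attained on $A$ itself), the separatedness of $A$ and $B$, and the continuity of the boundary-distance functions; I expect to spend the bulk of the proof ruling out a sequence in $A\setminus Cent_X(A)$ whose $\partial_X(A)$-distances tend to $r$ while its $\partial_X(B)$-distances stay bounded away from below $r$, which would contradict the limit point landing in $Cent_X(A)=\Tilde A$.
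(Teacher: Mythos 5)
You have put your finger on exactly the right issue: the step that upgrades ``$d_X(x,\partial_X(A\cup B))<rad_X(A)$ for every $x\in A\cup B$'' to ``$Srad_X(A\cup B)<rad_X(A)$'' is the crux, and your proposal does not close it. You list several candidate strategies (attainment of the supremum, a limit argument along a sequence approaching $Cent_X(A)$) but commit to none, and none of them can work: the supremum need not be attained, and a sequence $a_n\in Cent_X(A)=\Tilde{A}$ with $d_X(a_n,\partial_X(B))<rad_X(A)$ but $d_X(a_n,\partial_X(B))\to rad_X(A)$ cannot be ruled out, since $Cent_X(A)$ may be infinite and non-compact and the bound defining $\Tilde{A}$ is strict but not uniform. (The paper's own proof makes the same leap silently: it verifies the pointwise strict inequality on $Cent_X(A)$, on $A\setminus Cent_X(A)$ and on $B$, and then asserts the strict inequality for the supremum.)

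In fact the gap cannot be closed, because part (i) is false as stated. Modify the example the paper gives immediately after this theorem: let $X=\mathbb{R}\times(\{0\}\cup[1,\infty))$ with the Euclidean subspace metric, $A=\bigcup_{n\geq 1}[10n,10n+5]\times\{0\}$ and $B=\bigcup_{n\geq 1}[10n,10n+5]\times\{2.5-\tfrac{1}{n}\}$. These are nonclopen separated subsets with $rad_X(B)=0<rad_X(A)=2.5<\infty$ and $Cent_X(A)=\{(10n+2.5,0)\mid n\geq 1\}$. Since $d_X\bigl((10n+2.5,0),\partial_X(B)\bigr)=2.5-\tfrac{1}{n}<2.5$ for every $n$, we have $Cent_X(A)=\Tilde{A}$, i.e.\ $Cent_X(A)\setminus\Tilde{A}=\emptyset$, so all hypotheses of (i) hold; yet $d_X\bigl((10n+2.5,0),\partial_X(A\cup B)\bigr)=2.5-\tfrac{1}{n}\to 2.5$, whence $Srad_X(A\cup B)=2.5=rad_X(A)$. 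A symmetric construction defeats part (ii). What the hypotheses actually yield is only that no point of $A\cup B$ realizes the value $rad_X(A)$, hence $Srad_X(A\cup B)\leq rad_X(A)$ with $Cent_X(A\cup B)=\emptyset$ in the case of equality; to recover the strict inequality one needs an additional uniformity assumption, e.g.\ $\sup_{a\in Cent_X(A)}d_X(a,\partial_X(B))<rad_X(A)$ together with a uniform gap on $A\setminus Cent_X(A)$, or compactness of the relevant sets. So your instinct about where the proof must spend its effort was correct, but no amount of effort at that spot will succeed without strengthening the hypotheses.
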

\begin{proof}	By Theorem \ref{5.1.}, we get $Srad_{X}(A\cup B)\leq rad_{X}(A).$\\ First, let $rad_{X}(A)>rad_{X}(B).$ 
	 As $\Tilde{A}\subseteq Cent_{X}(A)$ and $Cent_{X}(A)\backslash\Tilde{A}= \emptyset$, we get $Cent_{X}(A)=\Tilde{A}.$ As $rad_{X}(A)$ is finite, by Theorem \ref{3.6}, we get $Cent_{X}(A)\neq\emptyset$. So, for $a\in Cent_{X}(A),$ we get $d_{X}(a,\partial_{X}(B))
	<rad_{X}(A).$ Consequently, $d_{X}(a,\partial_{X}(A\cup B))<rad_{X}(A).$ And for $a \in A$ such that $a \notin Cent_{X}(A),$ we get $ d_{X}(a,\partial_{X}(A\cup B)) < rad_{X}(A).$ For $b \in B,$ we have $d_{X}(b,\partial_{X}(A\cup B))\leq rad_{X}(B)$. Therefore, $Srad_{X}(A\cup B)< rad_{X}(A)$.
	\par 
	Now, let $rad_{X}(A)=rad_{X}(B).$ We must have
	  both $Cent_{X}(A)$ and $Cent_{X}(B)$ are nonempty. As  $(Cent_{X}(A)\backslash \Tilde{A})\cup (Cent_{X}(B)\backslash \Tilde{B})$ is empty, then $Cent_{X}(A)=\Tilde{A}$ $\&$ $Cent_{X}(B)=\Tilde{B}$. So, for $a \in Cent_{X}(A)$ $\cup \, Cent_{X}(B),$ we get $d_{X}(a,\partial_{X}(A\cup B))< rad_{X}(A).$ Also, for $a\in A\cup B$ such that $a\notin Cent_{X}(A)\cup Cent_{X}(B),$ we have $d_{X}(a,\partial_{X}(A\cup B))< rad_{X}(A).$ Therefore, $Srad_{X}(A\cup B)<rad_{X}(A)=rad_{X}(B).$ 
\end{proof}

Above result may not hold if $rad_{X}(A)$ is infinite. For example: Let $A=[2,\infty)$ and $B=[0,1]$ be subsets of Euclidean line $\mathbb{R}.$ Here, $rad_{X}(B)=0.5<rad_{X}(A)=\infty$ and $Cent_{X}(A)\backslash\Tilde{A}=\emptyset$. 
But $Srad_{X}(A\cup B)=\infty\nless rad_{X}(A)$.\par 

Notice that, in the above theorems, if $Cent_{X}(A\cup B)\neq\emptyset$, then $Srad_{X}(A\cup B)$ can be replaced with $rad_{X}(A\cup B)$. On the other hand, if $Cent_{X}(A\cup B)=\emptyset$, then above results may not hold by replacing $Srad_{X}(A)$ with $rad_{X}(A)$. For example: Consider, a metric subspace $X=\mathbb{R}\times (\{0\}\cup[1,\infty))$ of Euclidean space $\mathbb{R}^2$. Let $A=\bigcup_{n\in\mathbb{N}}[10n,10n+5]\times\{0\}$ and $B=\bigcup_{n\in\mathbb{N}}[10n,10n+5]\times\{2-\frac{1}{n}\} $ be subsets of $X$. Here, $Cent_{X}(A)=\Tilde{A}=\bigcup_{n\in\mathbb{N}}\{10n+2.5\}$, $rad_{X}(A)=2.5$, $rad_{X}(B)=0$ and $Srad_{X}(A\cup B)=2$. We can also notice that $Cent_{X}(A\cup B)=\emptyset$ and $rad_{X}(A\cup B)=\infty \nleq rad_{X}(A)$.

\begin{remark}\label{3.32}
	{\normalfont In Theorem \ref{4.2}(i), 
		we further establish a relationship between $Srad_{X}(A\cup B)$ and $rad_{X}(B)$. Define $\Tilde{\Tilde{A}}=\{a \in A| d_{X}(a,\partial_{X}(A\cup B))>rad_{X}(B)\}$. Then \begin{itemize}[noitemsep,nolistsep]
			\item[(i)] if $\Tilde{\Tilde{A}}\neq \emptyset,$ then $rad_{X}(B)< Srad_{X}(A\cup B)$, and    \item[(ii)] if $\Tilde{\Tilde{A}}=\emptyset$ then $Srad_{X}(A\cup B)\leq rad_{X}(B).$
	\end{itemize} }
\end{remark}

Note that for any point $a\in A\cup B$ such that $a\notin \Tilde{\Tilde{A}},$ we get $d_X(a,\partial_{X}(A\cup B))\leq rad_{X}(B).$ It is easy to observe that  if $\Tilde{\Tilde{A}}\neq\emptyset$ then all those points of $A\cup B$ which are at the maximum distance from $\partial_{X}(A\cup B)$ are in $\Tilde{\Tilde{A}}.$ So, we get
\begin{remark}
	{\normalfont   Let A and B are  nonclopen separated subsets of a metric space $(X,d_X)$. If $\Tilde{\Tilde{A}}\neq\emptyset$, then $Cent_{X}(A\cup B)\subseteq \Tilde{\Tilde{A}}$.
	}
\end{remark}

\begin{theorem}\label{4.3}
	Let A and B be  nonclopen separated subsets of a metric space $(X,d_X)$, such that $rad_{X}(B)<rad_{X}(A)=\infty$ and $rad_{X}(B)\geq Srad_{X}(A)$. Then,
	\begin{itemize}
		\item [(i)]  if 
		$Cent_{X}(B)\backslash\Tilde{B}\neq\emptyset$, then $rad_{X}(A\cup B)=Srad_{X}(A\cup B)=rad_{X}(B)$ $\&$ \\$Cent_{X}(A\cup B)=Cent_{X}(B)\backslash\Tilde{B}$, and
		\item [(ii)]  if 
		 $Cent_{X}(B)\backslash\Tilde{B}=\emptyset$, then $Srad_{X}(A\cup B)<rad_{X}(B)$.
	\end{itemize}
\end{theorem}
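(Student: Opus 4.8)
The plan is to work throughout with the identity $\partial_X(A\cup B)=\partial_X(A)\cup\partial_X(B)$, which holds because $A$ and $B$ are separated, so that $d_X(x,\partial_X(A\cup B))=\min\{d_X(x,\partial_X(A)),d_X(x,\partial_X(B))\}$ for every $x$. Before splitting into the two cases I would record the consequences of the hypotheses. Since $\partial_X(A)\neq\emptyset$ (as $A$ is nonclopen), $A\cup B$ is also nonclopen. Since $rad_X(B)<\infty$ and $B$ is nonclopen, Theorem \ref{3.6} gives $Cent_X(B)\neq\emptyset$; hence $rad_X(B)=Srad_X(B)=\sup_{b\in B}d_X(b,\partial_X(B))$, and on $B$ the quantity $d_X(b,\partial_X(B))$ equals $rad_X(B)$ exactly when $b\in Cent_X(B)$. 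Since $rad_X(A)=\infty$ and $A$ is nonclopen, the same theorem gives $Cent_X(A)=\emptyset$, so for \emph{every} $a\in A$ we have $d_X(a,\partial_X(A))<Srad_X(A)\le rad_X(B)$, a strict inequality because no point of $A$ realises $Srad_X(A)$. Finally, exactly as in the proof of Theorem \ref{5.1.} but using $Srad_X(A)\le rad_X(B)$ in place of $rad_X(A)=\infty$, one gets the sharpened estimate $Srad_X(A\cup B)\le rad_X(B)$.

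For (i), fix $b_0\in Cent_X(B)\backslash\Tilde{B}$. Then $d_X(b_0,\partial_X(B))=rad_X(B)$ and, since $b_0\notin\Tilde{B}$, $d_X(b_0,\partial_X(A))\ge rad_X(B)$; hence $d_X(b_0,\partial_X(A\cup B))=\min\{d_X(b_0,\partial_X(A)),rad_X(B)\}=rad_X(B)$. Together with $Srad_X(A\cup B)\le rad_X(B)$ this shows $Srad_X(A\cup B)=rad_X(B)$ and that this value is attained (at $b_0$), so $Cent_X(A\cup B)\neq\emptyset$ and, $A\cup B$ being nonclopen, $rad_X(A\cup B)=Srad_X(A\cup B)=rad_X(B)$. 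The inclusion $Cent_X(B)\backslash\Tilde{B}\subseteq Cent_X(A\cup B)$ is exactly this computation applied to an arbitrary such point. For the reverse inclusion, take $x\in Cent_X(A\cup B)$, so $d_X(x,\partial_X(A\cup B))=rad_X(B)$. If $x\in A$ then $d_X(x,\partial_X(A\cup B))\le d_X(x,\partial_X(A))<Srad_X(A)\le rad_X(B)$, a contradiction; so $x\in B$. Then $rad_X(B)=\min\{d_X(x,\partial_X(A)),d_X(x,\partial_X(B))\}$ together with $d_X(x,\partial_X(B))\le rad_X(B)$ forces $d_X(x,\partial_X(B))=rad_X(B)$, i.e. $x\in Cent_X(B)$, and also $d_X(x,\partial_X(A))\ge rad_X(B)$, i.e. $x\notin\Tilde{B}$. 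Thus $Cent_X(A\cup B)=Cent_X(B)\backslash\Tilde{B}$.

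For (ii), the hypothesis $Cent_X(B)\backslash\Tilde{B}=\emptyset$ means $Cent_X(B)=\Tilde{B}$, and I would show $d_X(x,\partial_X(A\cup B))<rad_X(B)$ for every $x\in A\cup B$ by the natural case split: if $x\in A$, then $d_X(x,\partial_X(A\cup B))\le d_X(x,\partial_X(A))<Srad_X(A)\le rad_X(B)$; if $x\in Cent_X(B)=\Tilde{B}$, then by the definition of $\Tilde{B}$, $d_X(x,\partial_X(A\cup B))\le d_X(x,\partial_X(A))<rad_X(B)$; and if $x\in B\backslash Cent_X(B)$, then $d_X(x,\partial_X(A\cup B))\le d_X(x,\partial_X(B))<rad_X(B)$, since $rad_X(B)=Srad_X(B)$ is attained on $B$ only at $Cent_X(B)$. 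Writing $Srad_X(A\cup B)=\max\{\sup_{a\in A}d_X(a,\partial_X(A\cup B)),\ \sup_{b\in B}d_X(b,\partial_X(A\cup B))\}$, bounding the first supremum by $Srad_X(A)\le rad_X(B)$ and the second through the two $B$-cases, one concludes $Srad_X(A\cup B)<rad_X(B)$, in parallel with Theorem \ref{4.2}(i)--(ii).

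I expect the one genuinely delicate point to be this last step of (ii): a supremum of numbers each strictly below $rad_X(B)$ need not itself be strictly below $rad_X(B)$, so the argument must be organised so that each of the finitely many families entering $Srad_X(A\cup B)$ has supremum bounded away from $rad_X(B)$ — the $A$-family via $Srad_X(A)\le rad_X(B)$, the two $B$-families via the fact that on $B$ one has $d_X(\cdot,\partial_X(B))\le rad_X(B)$ with equality only on $Cent_X(B)=\Tilde{B}$, where $d_X(\cdot,\partial_X(A))<rad_X(B)$. This is precisely the bookkeeping already performed in the proof of Theorem \ref{4.2}, so the two arguments should run in parallel. Everything else reduces to routine manipulation of $\min\{d_X(\cdot,\partial_X(A)),d_X(\cdot,\partial_X(B))\}$ together with the three structural facts noted at the outset ($A\cup B$ nonclopen, $Cent_X(B)\neq\emptyset$, $Cent_X(A)=\emptyset$), each of which is used essentially.
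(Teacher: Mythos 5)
Your proposal takes the same route as the paper's proof: write $\partial_{X}(A\cup B)=\partial_{X}(A)\cup\partial_{X}(B)$ so that $d_{X}(x,\partial_{X}(A\cup B))=\min\{d_{X}(x,\partial_{X}(A)),d_{X}(x,\partial_{X}(B))\}$, use $Cent_{X}(A)=\emptyset$ (from $rad_{X}(A)=\infty$ and Theorem \ref{3.6}) to get $d_{X}(a,\partial_{X}(A))<Srad_{X}(A)\leq rad_{X}(B)$ for every $a\in A$, and use $Cent_{X}(B)\neq\emptyset$ on the $B$ side. Your part (i) is correct and somewhat more complete than the paper's: the paper only verifies that points of $Cent_{X}(B)\backslash\Tilde{B}$ realise the value $rad_{X}(B)$ and that all other points fall strictly below it, while you also spell out the reverse inclusion $Cent_{X}(A\cup B)\subseteq Cent_{X}(B)\backslash\Tilde{B}$.

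For (ii) the paper's proof consists of the single word ``similarly'', so you supply at least as much as it does; but the delicate point you flag at the end is real, and your proposed repair does not close it. The $A$-family is not bounded \emph{away} from $rad_{X}(B)$: the hypothesis permits $Srad_{X}(A)=rad_{X}(B)$, in which case $\sup_{a\in A}d_{X}(a,\partial_{X}(A\cup B))\leq Srad_{X}(A)$ only yields $\leq rad_{X}(B)$, and since $Cent_{X}(A)=\emptyset$ that supremum is approached but never attained, so pointwise strictness adds nothing. Likewise for the family $B\backslash Cent_{X}(B)$: a sequence $x_{n}$ there with $d_{X}(x_{n},\partial_{X}(B))\to rad_{X}(B)$ and $d_{X}(x_{n},\partial_{X}(A))$ large would drive $\sup_{x\in B}d_{X}(x,\partial_{X}(A\cup B))$ up to $rad_{X}(B)$, and the hypotheses do not obviously exclude this (the $x_{n}$ need not converge, let alone converge into $Cent_{X}(B)=\Tilde{B}$). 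So, as written, both your argument and the paper's establish only $Srad_{X}(A\cup B)\leq rad_{X}(B)$ in case (ii); the strict inequality would need an additional uniformity or compactness input. The same remark applies to the proof of Theorem \ref{4.2}, whose ``bookkeeping'' you hoped to import: it carries the identical unresolved step, so the parallel you draw is accurate but includes the gap.
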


\begin{proof}
	As $rad_{X}(A)=\infty$, by Theorem  \ref{3.6}, $Cent_{X}(A)=\emptyset$, which means $ d_{X}(a,\partial_{X}(A))\\ <Srad_{X}(A) ,\forall a\in A.$ Since  $rad_{X}(B)$ is finite, we get $Cent_{X}(B)\neq\emptyset$.\\
	 $(i)$ If $a\in Cent_{X}(B)\backslash\Tilde{B}$, then  $a \in Cent_{X}(B)$ such that $d_{X}(a,\partial_{X}(A))\geq rad_{X}(B).$ Thus, $d_{X}(a,\partial_{X}(A\cup B))=\min\{d_{X}(a,\partial_{X}(A)),d_{X}(a,\partial_{X}(B))\}=d_{X}(a,\partial_{X}(B))$ $=rad_{X}(B).$ And, if $a\notin Cent_{X}(B)\backslash\Tilde{B}$, then $d_{X}(a,\partial_{X}(A\cup B))< rad_{X}(B).$ So, $Cent_{X}(A\cup B)=Cent_{X}(B)\backslash\Tilde{B}$ and  $rad_{X}(A\cup B)=Srad_{X}(A\cup B)=rad_{X}(B).$\\
	 $(ii)$ Similarly, if $Cent_{X}(B)\backslash\Tilde{B}=\emptyset$, then  $Srad_{X}(A\cup B)< rad_{X}(B).$
\end{proof}




\begin{example}
	{\normalfont Let $X\subseteq \mathbb{R}^2$ be the union of two rectangles with vertices $(0,0),(1,0),\\
		(1,2)$ $\&$ $(0,2)$ and $(1,0),(2,0),(2,2)$ $\&$ $(1,2).$ Thus $X$ is a metric subspace of the Euclidean space $\mathbb{R}^2.$ We consider nonclopen separated subsets $A$ and $B$ of $X$. 
		\begin{itemize}
			\item[(i)] Let $A$ and $B$ be the line segments joining $(0,0)$ to $(0,2)$ and  $(2,1)$ to $(2,2),$ respectively. Here $rad_{X}(A)=1 > 0.5 =rad_{X}(B), Cent_{X}(A)=\{(0,1)\}$ and $\Tilde{A}=\emptyset.$ By Theorem \ref{3.31}$(i)$, we get $rad_{X}(A\cup B)=1$ $\&$ $Cent_{X}(A\cup B)=\{(0,1)\}.$
			\item[(ii)]  Let $A$ and $B$ be the line segments joining $(0,0)$ to $(0,2)$ and  $(2,0)$ to $(2,2),$ respectively. Here $rad_{X}(A)=1 =rad_{X}(B), Cent_{X}(A)=\{(0,1)\}, Cent_{X}(B)\\
			=\{(2,1)\}$ and $\Tilde{A}=\emptyset=\Tilde{B}.$ By Theorem \ref{3.31}$(ii)$, $rad_{X}(A\cup B)=1$ and $Cent_{X}(A\cup B)=\{(0,1),(2,1)\}.$
			\item[(iii)] Let $A$ and $B$ be the line segments joining $(0,0)$ to $(2,0)$ and  $(1,0.2)$ to $(1,1),$ respectively.  Here $rad_{X}(A)=1 > 0.4 =rad_{X}(B), Cent_{X}(A)=\{(1,0)\}$ and $\Tilde{A}=\{(1,0)\}.$ By Theorem \ref{4.2}$(i)$, $Srad_{X}(A\cup B)<1.$ Note that for $a=(0.5,0), d_{X}(a,\partial_{X}(A\cup B))>0.4=rad_{X}(B).$ So, $\Tilde{\Tilde{A}}\neq\emptyset,$ and hence by Remark \ref{3.32}$(i)$, $Srad_{X}(A\cup B)> 0.4$.
			\item[(iv)]  Let $A$ and $B$ be the line segments joining $(0,0)$ to $(2,0)$ and  $(1,0.2)$ to $(1,2),$ respectively. Here $rad_{X}(A)=1 > 0.9 =rad_{X}(B), Cent_{X}(A)=\{(1,0)\}$ and $\Tilde{A}=\{(1,0)\}.$ By Theorem \ref{4.2}$(i)$, $Srad_{X}(A\cup B)<1.$ Infact, as $\Tilde{\Tilde{A}}=\emptyset,$ by Remark \ref{3.32}$(ii)$, $Srad_{X}(A\cup B)\leq 0.9$.
			
	\end{itemize}}
\end{example}
One can easily verify that the radius and center of $A\cup B$ in above all four cases are the same as we have obtained using Theorems \ref{3.31} and \ref{4.2}.
\begin{example}
	{\normalfont    Let $\mathbb{S}^2 \subseteq \mathbb{R}^3$ be the unit sphere with  metric induced from the Euclidean space $\mathbb{R}^3$. Let $A=\{(x,y,z)\in \mathbb{S}^2|y=0\}\backslash B_{\mathbb{S}^2}((1,0,0),0.1)$ and $B=\{(x,y,z)\in \mathbb{S}^2|z=0\}\backslash B_{\mathbb{S}^2}((-1,0,0),0.1)$ be nonclopen subsets of $\mathbb{S}^2$ such that $\overline{A}\cap\overline{B}=\emptyset,$ where $B_{\mathbb{S}^2}((1,0,0),0.1)$ and $B_{\mathbb{S}^2}((-1,0,0),0.1)$ are open balls centred at $(1,0,0)$ and $(-1,0,0)$ respectively, with radius $0.1$. Here $rad_{\mathbb{S}^2}(A)=rad_{\mathbb{S}^2}(B)\approx 1.97$, $Cent_{\mathbb{S}^2}(A)=\{(-1,0,0)\}, Cent_{\mathbb{S}^2}(B)=\{(1,0,0)\}$ and $\Tilde{A}=\{(-1,0,0)\}, \Tilde{B}=\{(1,0,0)\}.$ This implies $Cent_{\mathbb{S}^2}(A)\backslash\Tilde{A}$ and $Cent_{\mathbb{S}^2}(B)\backslash\Tilde{B}$ are empty. By Theorem \ref{4.2}(ii), $Srad_{\mathbb{S}^2}(A\cup B)< 1.97.$}
\end{example}
Next, we generalise above results for a finite union $\bigcup\limits_{i=1}^{n}A_{i}$ of nonclopen subsets $A_{i},1\leq i\leq n,$ of a metric space $X.$ 

\begin{theorem}\label{3.37}
	Let $(X,d_X)$ be a metric space. For nonclopen subsets $A_i \subseteq X, 1\leq i\leq n$, such that $A_i$ $\&$ $A_j$ are separated, 
	 for all $i\neq j$ and $n\in \mathbb{N},$
	 let $\Tilde{A}_{j}=\{a\in Cent_{X}(A_{j})|
	 d_{X}(a,\partial_{X}(A_{i}))<rad_{X}(A_{j}),\text{ for some } i\neq j\}, 1\leq j\leq n$. Let $M$ be the collection of all those $A_j$ such that $rad_{X}(A_{j})= \max\{rad_{X}(A_{i})|1\leq i \leq n\}$ and $Cent_{X}(A_j)\backslash\Tilde{A_j}\neq\emptyset$. Then, $Srad_{X}(\bigcup\limits_{1}^{n}A_{i}) \leq  \max\{rad_{X}(A_{i})|1\leq i\leq n\}$.\\
	  Moreover, if $\bigcup\limits_{A_j\in M}(Cent_{X}(A_j)\backslash\Tilde{A_j})\neq\emptyset,$ 
	 then  $Cent_{X}(\bigcup\limits_{1}^{n}A_{i}) = \bigcup\limits_{A_{j}\in M}(Cent_{X}(A_{j})\backslash \Tilde{A_{j}})$ $\&$ $rad_{X}(\bigcup\limits_{1}^{n}A_{i}) = \max\{rad_{X}(A_{i})|1\leq i \leq n\}.$

\end{theorem}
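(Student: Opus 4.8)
The plan is to reduce the $n$-fold statement to the binary case (Theorems \ref{3.31} and \ref{4.2}, together with Theorem \ref{5.1.}) by induction on $n$, but carefully tracking how the quantities $\Tilde{A}_j$, $M$, and the Semi-radius behave when we split $\bigcup_1^n A_i = \big(\bigcup_1^{n-1} A_i\big) \cup A_n$. First I would dispose of the Semi-radius bound: since the $A_i$ are pairwise separated, iterating the identity $\partial_X(A\cup B)=\partial_X(A)\cup\partial_X(B)$ for separated sets gives $\partial_X(\bigcup_1^n A_i)=\bigcup_1^n\partial_X(A_i)$, so for any $a\in A_k$ we have $d_X(a,\partial_X(\bigcup_1^n A_i))\le d_X(a,\partial_X(A_k))\le rad_X(A_k)\le \max_i rad_X(A_i)$; taking the supremum over all $a$ yields $Srad_X(\bigcup_1^n A_i)\le \max_i rad_X(A_i)$. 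This part is essentially immediate and does not need the induction.

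For the center and radius, I would induct on $n$, the base case $n=2$ being exactly Theorem \ref{3.31}(i)--(ii) (the case $rad_X(A)=rad_X(B)$ and the strict case both being covered, with the hypothesis $\bigcup_{A_j\in M}(Cent_X(A_j)\setminus\Tilde A_j)\ne\emptyset$ guaranteeing we are in the "nonempty center" branch). For the inductive step, write $B=\bigcup_1^{n-1}A_i$ and $A=A_n$. The subtle point is that $B$ need not be nonclopen-in-the-naive-sense in a way that lets us quote the binary theorems blindly — but since each $A_i$ is nonclopen and they are separated, $\partial_X(B)=\bigcup_1^{n-1}\partial_X(A_i)\ne\emptyset$, so $B$ is nonclopen, and $A,B$ are separated because $A_n$ is separated from each $A_i$, $i<n$. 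One then needs three facts relating the $n$-data to the binary-data: (a) $\max\{rad_X(B),rad_X(A_n)\}=\max_{1\le i\le n}rad_X(A_i)$, which follows from the induction hypothesis $rad_X(B)=\max_{i<n}rad_X(A_i)$ when $M$ restricted to the first $n-1$ sets is nonempty, and from a separate Semi-radius argument (Theorem \ref{5.1.}/\ref{4.2}) otherwise; (b) the binary "tilde" set $\Tilde B$ (distance from $\partial_X(A_n)$ less than $rad_X(B)$) combines with the inductive description $Cent_X(B)=\bigcup_{A_j\in M'}(Cent_X(A_j)\setminus\Tilde A_j)$, where $M'$ is the maximizing collection among $A_1,\dots,A_{n-1}$, to give precisely $Cent_X(B)\setminus\Tilde B=\bigcup_{A_j\in M'}(Cent_X(A_j)\setminus\Tilde A_j^{(n)})$ with the $n$-indexed tilde sets; and (c) the maximizing collection $M$ for all $n$ sets is obtained from $M'$ and $\{A_n\}$ according to whether $rad_X(A_n)$ is $<$, $=$, or $>$ than $rad_X(B)$.

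The case analysis then mirrors the statement of Theorem \ref{3.31}: if $rad_X(A_n)<rad_X(B)$, then $M=M'$ and Theorem \ref{3.31}(i) (applied to $B$ with larger radius) gives $Cent_X(B\cup A_n)=Cent_X(B)\setminus\Tilde B$, which I rewrite via (b) as $\bigcup_{A_j\in M}(Cent_X(A_j)\setminus\Tilde A_j)$ — the only thing to check here is that the condition "$d_X(a,\partial_X(A_i))<rad_X(A_j)$ for \emph{some} $i\ne j$" is the union of "$d_X(a,\partial_X(A_i))<rad_X(A_j)$ for some $i<n$, $i\ne j$" and "$d_X(a,\partial_X(A_n))<rad_X(A_j)$", so removing $\Tilde B$ (which encodes closeness to $\partial_X(A_n)$) after already having removed the within-$B$ tilde sets is exactly removing the full $\Tilde A_j$. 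Symmetrically if $rad_X(A_n)>rad_X(B)$ we get $M=\{A_n\}$ and apply Theorem \ref{3.31}(i) with $A_n$ as the larger set; and if $rad_X(A_n)=rad_X(B)$ we apply Theorem \ref{3.31}(ii) and $M=M'\cup\{A_n\}$ (keeping only those pieces whose $\Tilde{}$-complement is nonempty). In each branch, once the center is identified and nonempty, $rad_X(\bigcup_1^n A_i)=d_X(\text{center},\partial_X(\bigcup_1^n A_i))=\max_i rad_X(A_i)$ follows from (a) and the binary radius conclusion.

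The main obstacle I anticipate is bookkeeping point (b): showing that the binary tilde-set $\Tilde B$ interacts cleanly with the inductively-produced $Cent_X(B)$, i.e. that "distance to $\partial_X(B)=\bigcup_{i<n}\partial_X(A_i)$" is the minimum of the distances to the individual $\partial_X(A_i)$, and hence that a point of $Cent_X(B)$ lying in some $Cent_X(A_j)$ ($A_j\in M'$) fails to be in $\Tilde B$ precisely when it is far from \emph{every} $\partial_X(A_i)$, $i<n$, $i\ne j$ — which is what the $n$-indexed $\Tilde A_j$ already records for $i<n$. One must also handle the degenerate possibility that after removing tilde-sets some $A_j$ contributes nothing, so $M$ is defined with the proviso $Cent_X(A_j)\setminus\Tilde A_j\ne\emptyset$ built in; tracking this through the induction (so that the hypothesis $\bigcup_{A_j\in M}(\cdots)\ne\emptyset$ is correctly propagated to either the $B$-part or the $A_n$-part or their union) is where most of the care is needed. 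Everything else is a routine application of the $\min$-formula $d_X(a,\partial_X(A\cup B))=\min\{d_X(a,\partial_X(A)),d_X(a,\partial_X(B))\}$ valid for separated $A,B$.
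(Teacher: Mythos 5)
Your proposal is correct and follows essentially the same route as the paper: induction on $n$ with the decomposition $\bigcup_1^{n}A_i=\bigl(\bigcup_1^{n-1}A_i\bigr)\cup A_n$, reduction to Theorems \ref{3.31} and \ref{5.1.}, and the same key bookkeeping identity $Cent_{X}(B)\backslash\Tilde{B}=\bigcup_{A_j\in M}(Cent_{X}(A_j)\backslash\Tilde{A_j})$ obtained from $d_{X}(b,\partial_{X}(B))=\min_i d_{X}(b,\partial_{X}(A_i))$. The only (harmless) deviation is that you prove the semi-radius bound directly from $\partial_{X}(\bigcup_1^n A_i)=\bigcup_1^n\partial_{X}(A_i)$ rather than carrying it through the induction, which is if anything slightly cleaner.
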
 

\begin{proof}
	We prove it by induction. If $n=2,$ it is true by Theorems \ref{3.31} $\&$  \ref{5.1.}.\\
	Assume that it is true for some $k\in \mathbb{N}$. Let $B=\bigcup\limits_{1}^{k}A_{i}$ and $K$ be the collection of all those $A_j$ such that $rad_{X}(A_{j})= \max\{rad_{X}(A_{i})|1\leq i \leq k\}$ and $Cent_{X}(A_j)\backslash\Tilde{A_j}\neq\emptyset$.\\ By induction hypothesis, we have $Srad_{X}(B) \leq  \max\{rad_{X}(A_{i})|1\leq i\leq k\}$, and  if 
	$\bigcup\limits_{A_j\in K}(Cent_{X}(A_j)\backslash\Tilde{A_j})\neq\emptyset,$
	then $Cent_{X}(B) = \bigcup\limits_{A_{j}\in K}(Cent_{X}(A_{j})\backslash \Tilde{A_{j}})$ and
	$rad_{X}(B)=\max\{rad_{X}(A_{i})
	|1\leq i \leq k\}.$ \\
	Now, we prove it for $k+1$. Let $K'$ be the collection of all those $A_j$ such that $rad_{X}(A_{j})= \max\{rad_{X}(A_{i})|1\leq i \leq k+1\}$ and $Cent_{X}(A_j)\backslash\Tilde{A_j}\neq\emptyset$.\par
	If $rad_{X}(B)<rad_{X}(A_{k+1})$, then by Theorems  \ref{3.31} $\&$  \ref{5.1.}, we have $Srad_{X}(\bigcup\limits_{1}^{k+1}A_{i}) = Srad_{X}(B\cup A_{k+1})\leq rad_{X}(A_{k+1})\leq \max\{rad_{X}(A_{i})|1\leq i\leq k+1\}$. And if\\ $Cent_{X}(A_{k+1})\backslash\Tilde{A}_{k+1}\neq\emptyset,$
	then $Cent_{X}(\bigcup\limits_{1}^{k+1}A_{i})= Cent_{X}(B\cup A_{k+1})= Cent_{X}(A_{k+1})\backslash\Tilde{A}_{k+1}$, and $rad_{X}((\bigcup\limits_{1}^{k+1}A_{i}))=rad_{X}(A_{k+1}) = \max\{rad_{X}(A_{i})|1\leq i \leq k+1\}$.\par
	If $rad_{X}(A_{k+1})<rad_{X}(B)$, then again by Theorems  \ref{3.31} $\&$  \ref{5.1.}, we have  $Srad_{X}(\bigcup\limits_{1}^{k+1}A_{i})\\ =Srad_{X}(B\cup A_{k+1})\leq rad_{X}(B)\leq \max\{rad_{X}(A_{i})|1\leq i\leq k+1\}$. And if \\
		$Cent_{X}(B)\backslash\Tilde{B}\neq\emptyset,$ then $Cent_{X}(\bigcup\limits_{1}^{k+1}A_{i})= 
		Cent_{X}(B)\backslash\Tilde{B}$ and $rad_{X}(\bigcup\limits_{1}^{k+1}A_{i})=rad_{X}(B) \\= \max\{rad_{X}(A_{i})|1\leq i \leq k+1\}.$
		
		 Next, we observe that in this case $Cent_{X}(B)\backslash\Tilde{B}= \bigcup\limits_{A_{j}\in K'}(Cent_{X}(A_{j})\backslash \Tilde{A_{j}})$.\\ We have $\Tilde{B}=\{b\in Cent_{X}(B)|d_{X}(b,\partial_{X}(A_{k+1}))< rad_{X}(B)\}$. Let $b\in Cent_{X}(B)\backslash\Tilde{B}.$ Then $b\in Cent_{X}(B)=\bigcup\limits_{A_{j}\in K}(Cent_{X}(A_{j})\backslash \Tilde{A_{j}})$ and $b\notin \Tilde{B}\implies b\in Cent_{X}(A_{j})\backslash \Tilde{A_{j}}$ for some $A_j\in K$ and $d_{X}(b,\partial_{X}(A_{K+1}))\geq rad_{X}(B)=rad_{X}(A_j).$ This gives that $Cent_{X}(A_{j})\backslash \Tilde{A_{j}}\neq\emptyset, \text{ and } rad_{X}(A_j)=\max\{rad_{X}(A_i|1\leq i\leq k+1)\}\implies A_j\in K'\implies b\in \bigcup\limits_{A_{j}\in K'}(Cent_{X}(A_{j})\backslash \Tilde{A_{j}})$.\\ Conversely, let $b\in \bigcup\limits_{A_{j}\in K'}(Cent_{X}(A_{j})\backslash \Tilde{A_{j}})
		  \implies b\in Cent_{X}(A_{j})\backslash \Tilde{A_{j}}$ for some $A_j\in K'\implies b\notin \Tilde{A}_j\implies A_j\in K \text{ and } d_{X}(b,\partial_{X}(A_{k+1}))\geq rad_{X}(A_j)=rad_{X}(B)\implies b\in Cent_{X}(B) \text{ and }
		   b\notin \Tilde{B}\implies b\in Cent_{X}(B)\backslash\Tilde{B}. $\par
	If $rad_{X}(A_{k+1})=rad_{X}(B)$, then by Theorems  \ref{3.31} $\&$  \ref{5.1.}, we have $Srad_{X}(\bigcup\limits_{1}^{k+1}A_{i}) =Srad_{X}(B\cup A_{k+1})\leq rad_{X}(B)\leq \max\{rad_{X}(A_{i})|1\leq i\leq k+1\}$. And if 
	$(Cent_{X}(B)\backslash\Tilde{B})\cup(Cent_{X}(A_{k+1})\backslash\Tilde{A}_{k+1})\neq\emptyset,$ then $Cent_{X}(\bigcup\limits_{1}^{k+1}A_{i})= Cent_{X}(B\cup A_{k+1}) = (Cent_{X}(B)\backslash\Tilde{B})\cup(Cent_{X}(A_{k+1})\backslash\Tilde{A}_{k+1})$ and $rad_{X}(\bigcup\limits_{1}^{k+1}A_{i})=rad_{X}(A_{k+1})
	=rad_{X}(B) = \max\{rad_{X}(A_{i})|1\\
	\leq i \leq k+1\}.$ In this case, it is easy to observe that $(Cent_{X}(B)\backslash\Tilde{B})\cup(Cent_{X}(A_{k+1})\backslash\Tilde{A}_{k+1})\\
	= \bigcup\limits_{A_{j}\in K'}(Cent_{X}(A_{j})\backslash \Tilde{A_{j}}).$
 Thus, it is true for $i=k+1$. Hence, our claim.
\end{proof}

\section{\textbf{Largest open balls contained in a subset of metric space}}
 For any  subset $A$ of a metric space $X,$ there is a natural question to identify the 
largest open balls (largest open ball means an open ball of  $X$ with the largest radius) that are entirely contained in $A$. To answer this question, we introduce a notion of quasi-center and quasi-radius of a subset $A$ of metric space $X$:

\begin{definition}[Quasi-center of a subset]
    The quasi-center of $A$ is the set $\{a\in A|d_{X}(a,A^c)\geq d_{X}(b,A^c),\forall b\in A\},$ where $A^c$ denotes the complement of $A$ in $X$. We denote the quasi-center of $A$ in $X$ by $QCent_{X}(A).$\par
    Thus the quasi-center of $A$ is the set of all those elements of $A$ which are at the maximum distance from $A^c.$
\end{definition}

\begin{definition}[Quasi-radius of a subset]
    The quasi-radius of a subset $A$ of metric space $X$ is the distance between its quasi-center and its complement in $X.$ We denote the quasi-radius of $A$ in $X$ by $Qrad_{X}(A)$.\par
    Notice that $Qrad_{X}(A)=d_{X}(QCent_{X}(A),A^c)=d_{X}(a,A^c),\forall a\in QCent_{X}(A).$
\end{definition}

\begin{example}\label{5.3}
	{\normalfont    Let $X \subseteq\mathbb{R}^2$ denote the union of $A$ and $B$, where A is a semi unit circle $\{(x,y)\in \mathbb{R}^2|x^2+y^2=1$ and $ x\geq 0\}$ and $B$ is the union of three line segments joining $(i)$  $(0,1)$ to $(1.5,1)$, $(ii)$ $(1.5,1)$ to $(1.5,-1)$ and  $(iii)$ $(0,-1)$ to $(1.5,-1)$. Consider $X=A\cup B$ as a metric subspace of the Euclidean metric space  $\mathbb{R}^2$.  Here $Cent_{X}(B)=\{(1.5,0)\}$ $\&$ $rad_{X}(B)\approx 1.8$, and  $QCent_{X}(B)=\{(1.5,1),(1.5,-1)\}$ $\&$ $Qrad_{X}(B)\approx 0.8$.}
\end{example}

As the complement of a metric space $X$ is empty in itself, we get $QCent_{X}(X)=X$ $\&$ $Qrad_{X}(X)=\infty$. Also, we have $QCent_{X}(\emptyset)=\emptyset$ $\&$ $Qrad_{X}(\emptyset)=\infty$. \\

As the quasi-center of A  consists all those points of A which are at the maximum distance from its complement, $Qrad_{X}(A)$ is the maximum distance of any point $a \in A$ from its complement. It is clear that if $QCent_X(A)\neq \emptyset$ then $Qrad_{X}(A) = \sup_{a \in A}d_{X}(a,A^c)$. And, if  $QCent_X(A) = \emptyset$, then $Qrad_{X}(A)=\infty$, but $\sup_{a \in A}d_{X}(a,A^c)$ could be finite. \par 
It leads us to introduce the notion of  the semi-quasi-radius of a subset $A$ of a metric space $X$.

\begin{definition}(Semi-quasi-radius)
The semi-quasi-radius of a subset $A$ of metric space $X$ is the supremum of the set that consists of distance of any point $a\in A$ from $A^c$. We denote the semi-quasi-radius of A in X by $SQrad_{X}(A).$\par That is, $SQrad_{X}(A)= \sup_{a \in A}d_{X}(a,A^c)$.
\end{definition}

 Note that, $Qrad_{X}(A)\geq SQrad_{X}(A)$. In Example \ref{3.6.} and \ref{3.7.}, it is easy to see that  $Qrad_{X}(A) > SQrad_{X}(A)$.
\begin{lemma}
	Let $A$ be a subset of metric space $X$. 
	Then, $ SQrad_{X}(A)\leq rad_{X}(A).$
\end{lemma}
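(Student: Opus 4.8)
The plan is to reduce the claimed inequality to the already-recorded fact that $rad_{X}(A)\geq Srad_{X}(A)$ by comparing, pointwise on $A$, the distance from a point to $A^{c}$ with its distance to $\partial_{X}(A)$. The key topological input is that $\partial_{X}(A)=\overline{A}\cap\overline{A^{c}}\subseteq\overline{A^{c}}$, together with the standard metric fact that the distance from a point to a set equals the distance to the closure of that set.

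First I would fix an arbitrary $a\in A$ and note $d_{X}(a,A^{c})=d_{X}(a,\overline{A^{c}})$. Since $\partial_{X}(A)\subseteq\overline{A^{c}}$, the infimum defining $d_{X}(a,\partial_{X}(A))$ is taken over a subset of the set defining $d_{X}(a,\overline{A^{c}})$, and an infimum over a smaller set is at least as large; hence $d_{X}(a,\partial_{X}(A))\geq d_{X}(a,\overline{A^{c}})=d_{X}(a,A^{c})$ for every $a\in A$. Taking the supremum over $a\in A$ on both sides yields $SQrad_{X}(A)=\sup_{a\in A}d_{X}(a,A^{c})\leq \sup_{a\in A}d_{X}(a,\partial_{X}(A))=Srad_{X}(A)$. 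Finally, $Srad_{X}(A)\leq rad_{X}(A)$ is precisely the inequality noted just after the definition of the semi-radius (equality holds when $Cent_{X}(A)\neq\emptyset$, and when $Cent_{X}(A)=\emptyset$ one has $rad_{X}(A)=\infty$), so chaining the two inequalities gives $SQrad_{X}(A)\leq rad_{X}(A)$.

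There is no real obstacle here; the only points needing a word of care are degenerate cases, all of which are consistent with the convention (used throughout the paper) that the distance to the empty set is $+\infty$. If $A=X$ then $A^{c}=\emptyset$, so $SQrad_{X}(X)=\infty=rad_{X}(X)$; if $A$ is clopen and proper then $\partial_{X}(A)=\emptyset$ and $rad_{X}(A)=\infty$ by Lemma \ref{2.8}; and for $A=\emptyset$ the claim is immediate since $rad_{X}(\emptyset)=\infty$. In every remaining case $\partial_{X}(A)\neq\emptyset$ and the pointwise comparison above applies verbatim.
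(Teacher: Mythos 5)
Your proof is correct and follows essentially the same route as the paper's: both rest on the inclusion $\partial_{X}(A)\subseteq\overline{A^{c}}$, the resulting pointwise inequality $d_{X}(a,\partial_{X}(A))\geq d_{X}(a,\overline{A^{c}})=d_{X}(a,A^{c})$ for $a\in A$, and then passing to suprema and invoking $rad_{X}(A)\geq\sup_{a\in A}d_{X}(a,\partial_{X}(A))$. Your explicit naming of the intermediate quantity $Srad_{X}(A)$ and your treatment of the degenerate clopen/empty cases are harmless elaborations of the same argument.
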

\begin{proof}
	As $\partial_{X}(A) \subseteq \overline{A^{c}} \implies d_{X}(a,\partial_{X}(A))\geq d_{X}(a,\overline{A^c}) = d_{X}(a,A^c), \forall a \in A$ $\implies$ $rad_{X}(A)\geq \sup_{a'\in A}d_{X}(a',\partial_{X}(A))\geq d_{X}(a,\partial_{X}(A)) \geq d_{X}(a,A^c),\forall a\in A.$  Thus, $SQrad_{X}(A)\leq rad_{X}(A).$
\end{proof}

 If $QCent_{X}(A)\neq\emptyset$, then by above lemma $ Qrad_{X}(A)\leq rad_{X}(A).$ If $QCent_{X}(A)=\emptyset,$ then it may not be true. For Example: Consider, $X=(\mathbb{R}\times\{0\})\cup B$ with subspace metric from Euclidean space $\mathbb{R}^2$, where $B=\bigcup_{n\in\mathbb{N}}[10n,10n+5]\times\{2-\frac{1}{n}\}.$ And, let $A=\bigcup_{n\in\mathbb{N}}[10n,10n+5]\times\{0\}.$ Here, $QCent_{X}(A)=\emptyset$, $SQrad_{X}(A)=2$ but $Qrad_{X}(A)=\infty \nleq rad_{X}(A)=2.5.$

\begin{remark}\label{5.5}
     {\normalfont A metric space $(X,d_X)$ is a path metric space if  the distance between each pair of points equals the infimum of the lengths of the curves joining the points\cite{Gro99}. Recall that the Euclidean spaces and connected Riemannian manifolds are path metric spaces. If $X$ is a path metric space, then for a proper subset $A$ of $X$, we have $d_{X}(a,\partial_{X}(A))= d_{X}(a,A^c),\forall a\in A.$ This gives that for path metric space $X,$ $Cent_{X}(A)= QCent_{X}(A)$, and $rad_{X}(A)= Qrad_{X}(A)$.\par Notice that, in Example \ref{2.3} and \ref{2.4}, quasi-center and quasi-radius of subsets are the same as their center and radius, respectively. By Example \ref{5.3}, we see that the  above remark is not true if $X$ is not a path metric space.
     }
\end{remark}

It is easy to observe the following results:

\begin{lemma}\label{3.16}
   Let $A$ be a nonempty subset of a metric space X such that $ A \subseteq \partial_{X}(A)$. Then, $QCent_{X}(A)= A$ and $Qrad_{X}(A)= 0$.
\end{lemma}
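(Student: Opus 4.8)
The plan is to argue directly from the definition of the quasi-center, exactly mirroring the proof of Lemma~\ref{A} but with $A^c$ (resp. $\overline{A^c}$) in place of $\partial_X(A)$. First I would observe that the hypothesis $A\subseteq\partial_X(A)$ forces $A$ to have empty interior: if some $a\in A$ had a neighbourhood $B_{d_X}(a,\epsilon)\subseteq A$, then $a\in A^\circ$, and since $A^\circ\cap\partial_X(A)=\emptyset$ this would contradict $a\in A\subseteq\partial_X(A)$. Hence $A^\circ=\emptyset$, so every point of $A$ is a boundary point of $A$, which in a metric space means every point of $A$ is a limit point of $A^c$ (again using $a\in\partial_X(A)$ and $A^\circ=\emptyset$, so $a\notin A^\circ$ implies every ball around $a$ meets $A^c$). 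Therefore $a\in\overline{A^c}$ for all $a\in A$, giving $d_X(a,\overline{A^c})=0$, and since $d_X(a,A^c)=d_X(a,\overline{A^c})$ we conclude $d_X(a,A^c)=0$ for every $a\in A$.

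Once we know $d_X(a,A^c)=0$ for all $a\in A$, the rest is immediate: for any $a,b\in A$ we have $d_X(a,A^c)=0=d_X(b,A^c)$, so the inequality $d_X(a,A^c)\ge d_X(b,A^c)$ holds trivially for all $b\in A$. Thus every point of $A$ lies in $QCent_X(A)$, i.e. $QCent_X(A)=A$ (the reverse inclusion $QCent_X(A)\subseteq A$ is part of the definition). Finally, $Qrad_X(A)=d_X(QCent_X(A),A^c)=d_X(a,A^c)=0$ for any $a\in QCent_X(A)=A$, which is the second assertion.

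I do not anticipate a genuine obstacle here; the only point requiring a little care is the step ``$A\subseteq\partial_X(A)\Rightarrow$ every $a\in A$ is a limit point of $A^c$,'' which I would justify by recalling that $\partial_X(A)=\overline{A}\cap\overline{A^c}$, so $a\in\partial_X(A)$ directly gives $a\in\overline{A^c}$ and hence $d_X(a,A^c)=d_X(a,\overline{A^c})=0$ — making the empty-interior detour unnecessary. Either route is routine. Since the statement is flagged in the paper as ``easy to observe,'' a two- or three-line argument along these lines should suffice.
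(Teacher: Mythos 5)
Your argument is correct and is exactly the routine verification the paper intends (it states this lemma without proof, as ``easy to observe,'' in analogy with Lemma~\ref{A}): from $A\subseteq\partial_X(A)=\overline{A}\cap\overline{A^c}$ one gets $d_X(a,A^c)=d_X(a,\overline{A^c})=0$ for every $a\in A$, so all points of $A$ trivially attain the maximal distance $0$ and the two claims follow from the definitions. Your cleaner second route (bypassing the empty-interior detour) is the preferable one, and note that the hypotheses already force $A\neq X$, so $A^c\neq\emptyset$ and the distances involved are genuinely zero rather than vacuously infinite.
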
 

\begin{lemma}\label{3.17}
    Let $A$ be a subset of metric space $X$ with nonempty interior. Then $QCent_{X}(A)$ $\subseteq$ $A^\circ$.
\end{lemma}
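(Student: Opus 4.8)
The plan is to mirror the proof of Lemma \ref{B}, replacing $\partial_X(A)$ throughout by the complement $A^c$ and using the elementary inclusion $\partial_X(A)\subseteq\overline{A^c}$. First I would dispose of the clopen case: if $A$ is clopen then $A^\circ=A$, so the inclusion $QCent_X(A)\subseteq A^\circ$ holds trivially (indeed $QCent_X(A)\subseteq A=A^\circ$).

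Assume henceforth that $A$ is nonclopen, so $\partial_X(A)\neq\emptyset$, and recall that every point of $A$ lies either in $\partial_X(A)$ or in $A^\circ$ (since $A\subseteq A^\circ\cup\partial_X(A)$). The key dichotomy: if $a\in\partial_X(A)\cap A$, then $a\in\overline{A^c}$, hence $d_X(a,A^c)=d_X(a,\overline{A^c})=0$; if $a\in A^\circ$, then there is $\epsilon>0$ with $B_{d_X}(a,\epsilon)\subseteq A^\circ\subseteq A$, so $B_{d_X}(a,\epsilon)\cap A^c=\emptyset$ and therefore $d_X(a,A^c)\geq\epsilon>0$.

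Now I would use the hypothesis $A^\circ\neq\emptyset$: fix any $a_0\in A^\circ$, so that $d_X(a_0,A^c)>0$ by the previous step, and hence $\sup_{b\in A}d_X(b,A^c)>0$. If $QCent_X(A)=\emptyset$ the claim is vacuous, so take $a\in QCent_X(A)$; then $d_X(a,A^c)\geq d_X(a_0,A^c)>0$. This forces $a\notin\overline{A^c}$, so $a\notin\partial_X(A)$ (as $\partial_X(A)\subseteq\overline{A^c}$); since $a\in A$, the dichotomy above gives $a\in A^\circ$. Thus $QCent_X(A)\subseteq A^\circ$.

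There is no real obstacle here; the only points requiring a small amount of care are recording the inclusion $\partial_X(A)\subseteq\overline{A^c}$ (so that boundary points of $A$ have zero distance to $A^c$) and explicitly allowing for the possibility $QCent_X(A)=\emptyset$, in which case the statement is immediate.
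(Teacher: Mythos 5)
Your proof is correct and follows essentially the same route the paper takes: the paper states this lemma without proof as an easy observation, but its proof of the analogous Lemma \ref{B} is exactly the dichotomy you use (boundary points of $A$ are at distance zero, interior points at positive distance, hence maximizers must be interior), with $\partial_X(A)$ replaced by $A^c$ via $\partial_X(A)\subseteq\overline{A^c}$. Your extra care about the clopen case, the empty quasi-center, and the explicit use of $A^\circ\neq\emptyset$ to get a positive supremum only makes the argument more complete.
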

\begin{theorem}\label{3.18}
    Let $A$ be a nonempty subset of metric space $X$. Then $A^\circ = \emptyset$ if and only if $Qrad_{X}(A) = 0$.
\end{theorem}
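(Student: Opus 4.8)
The plan is to mirror the proof of Theorem~\ref{C}, replacing boundary-distance by complement-distance and invoking the quasi-analogues of the earlier lemmas. First I would dispose of the easy direction: if $A^\circ=\emptyset$ then $A\subseteq\partial_X(A)$, so Lemma~\ref{3.16} immediately gives $QCent_X(A)=A$ and $Qrad_X(A)=0$.

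For the converse, suppose $Qrad_X(A)=0$. I would first argue that $A$ cannot be clopen: if $A$ were clopen then $A^c$ is clopen too, and for any $a\in A$ we have $d_X(a,A^c)>0$ (since $a\notin\overline{A^c}=A^c$), whence $QCent_X(A)=A$ and $Qrad_X(A)=\infty\neq 0$ (using $A\neq\emptyset$); this is the quasi-version of Lemma~\ref{2.8}. So $\partial_X(A)\neq\emptyset$. Next I need the quasi-analogue of Theorem~\ref{3.6}: for a nonclopen $A$, $QCent_X(A)\neq\emptyset$ iff $Qrad_X(A)$ is finite. Granting this (its proof is the same two-line argument as Theorem~\ref{3.6} with $\partial_X(A)$ replaced by $A^c$, noting $\overline{A^c}\neq\emptyset$), since $Qrad_X(A)=0<\infty$ we get $QCent_X(A)\neq\emptyset$. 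Pick $x\in QCent_X(A)$; then $d_X(x,A^c)=Qrad_X(A)=0$, so $x\in\overline{A^c}$. Thus $QCent_X(A)\subseteq\overline{A^c}$, and since $QCent_X(A)\subseteq A$ we get $QCent_X(A)\subseteq A\cap\overline{A^c}=\partial_X(A)$, which is disjoint from $A^\circ$.

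Finally, suppose for contradiction that $A^\circ\neq\emptyset$. By Lemma~\ref{3.17}, $QCent_X(A)\subseteq A^\circ$. Combined with the previous paragraph's inclusion $QCent_X(A)\subseteq\partial_X(A)$ and $A^\circ\cap\partial_X(A)=\emptyset$, we conclude $QCent_X(A)=\emptyset$, contradicting $QCent_X(A)\neq\emptyset$. Hence $A^\circ=\emptyset$, completing the proof.

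The only real obstacle is that the proof invokes two quasi-lemmas (the quasi-version of Lemma~\ref{2.8} on clopen sets, and the quasi-version of Theorem~\ref{3.6} relating nonempty quasi-center to finite quasi-radius) that are not stated verbatim in the excerpt; since the paper says ``It is easy to observe the following results'' before Lemmas~\ref{3.16}--\ref{3.17}, I expect the authors either to state these quasi-analogues explicitly just before this theorem or to fold their one-line verifications inline, exactly as Theorem~\ref{C} folds in its references to Lemma~\ref{2.8} and Theorem~\ref{3.6}. Everything else is a direct transcription of the argument for Theorem~\ref{C}.
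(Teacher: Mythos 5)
The paper offers no proof of this theorem — it is swept into the ``It is easy to observe the following results'' preceding Lemmas~\ref{3.16} and~\ref{3.17} — and your argument is exactly the intended transcription of Theorem~\ref{C} with $\partial_X(A)$ replaced by $A^c$; it is correct in both directions. One intermediate assertion, however, is false and worth fixing: the ``quasi-version of Lemma~\ref{2.8}'' does not hold verbatim. For a nonempty proper clopen $A$ the complement $A^c$ is nonempty (unlike $\partial_X(A)$, which is empty), so $Qrad_X(A)$ need not be $\infty$ and $QCent_X(A)$ need not be all of $A$: take $X=\{0,1\}$ with the discrete metric and $A=\{1\}$, where $QCent_X(A)=A$ but $Qrad_X(A)=1$; or $X=\{0\}\cup[1,\infty)\subseteq\mathbb{R}$ and $A=[1,\infty)$, where $QCent_X(A)=\emptyset$. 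What you actually need — and what does follow from your observation that $d_X(a,A^c)>0$ for every $a\in A$ when $A$ is clopen, together with the convention $Qrad_X(A)=\infty$ when $QCent_X(A)=\emptyset$ — is only that $Qrad_X(A)\neq 0$, so the clopen case is still excluded and the rest of your proof goes through unchanged. (Alternatively, you can skip the clopen discussion entirely: $Qrad_X(A)=0\neq\infty$ already forces $QCent_X(A)\neq\emptyset$ by the definitional convention, after which your argument via $QCent_X(A)\subseteq A\cap\overline{A^c}=\partial_X(A)$ and Lemma~\ref{3.17} finishes it.)
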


In the next theorem, using the above notions of quasi-center and quasi-radius, we determine the largest open balls contained in a subset $A$ of metric space $X.$ If $A$ is a proper subset of metric space $(X,d_X)$ such that $QCent_{X}(A)$ is empty, then there does not exist any open ball with largest radius that is contained in $A$. As $QCent_{X}(A)=\emptyset \implies Qrad_{X}(A)=\infty$. Let open ball $B(a,r)$ be the largest open ball entirely contained in $A$ for some $a\in A$ and $r>0$. This means $ B(a,r)\cap A^c=\emptyset \implies d_{X}(a,A^c)\geq r$. Now, as $Qrad_{X}(A)=\infty$ , for some $s>r, \exists$ $ b\in A$ such that $d_{X}(b,A^c)\geq s\implies B(b,s)$ contained in $A$, a contradiction. For example, $A=[0,\infty)\subseteq\mathbb{R}$ has no largest open ball contained in $A$.\par
For nonempty quasi-center of a subset $A$ of metric space $X$, we have the following result.

\begin{theorem}\label{3.19} Let  $A$ be a nonempty proper subset of metric space $X$. Then the largest open balls of X which are entirely contained  in A are the balls whose centers belong to $QCent_{X}(A)$ and radius is  equal to $Qrad_{X}(A)$.
\end{theorem}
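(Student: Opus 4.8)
The plan is to reduce everything to elementary estimates on $d_X(\cdot,A^c)$, after first disposing of the degenerate case. Since $A$ is proper, $A^c\neq\emptyset$, so $d_X(a,A^c)$ is a finite nonnegative real for every $a\in A$. If $QCent_X(A)=\emptyset$, the discussion preceding the statement already shows that no largest open ball of $X$ is contained in $A$; since there is also no ball centered at a point of the empty set $QCent_X(A)$, the assertion holds vacuously. So I would assume $QCent_X(A)\neq\emptyset$ and set $\rho=Qrad_X(A)$; by the remarks above this $\rho$ equals $\sup_{a\in A}d_X(a,A^c)$, is finite, and is attained, say at some $a_0\in QCent_X(A)$.

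First I would show that every ball $B(a_0,\rho)$ with $a_0\in QCent_X(A)$ is contained in $A$ and that $\rho$ is the largest possible radius. For the inclusion: if $x\in B(a_0,\rho)$ then $d_X(a_0,x)<\rho=d_X(a_0,A^c)$, so $x\notin A^c$, i.e.\ $x\in A$; hence $B(a_0,\rho)\subseteq A$. For maximality: if $B(b,s)$ is any open ball of $X$ with $B(b,s)\subseteq A$, then $b\in A$ and $B(b,s)\cap A^c=\emptyset$, which forces $d_X(b,A^c)\ge s$; combining with $d_X(b,A^c)\le\sup_{a\in A}d_X(a,A^c)=\rho$ gives $s\le\rho$. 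Thus no open ball contained in $A$ has radius exceeding $\rho$, while $B(a_0,\rho)\subseteq A$ realizes radius $\rho$, so the largest open balls contained in $A$ are precisely those of radius $\rho=Qrad_X(A)$.

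It then remains to identify the centers of these largest balls. If $B(b,\rho)\subseteq A$, then as above $d_X(b,A^c)\ge\rho$, while $b\in A$ yields $d_X(b,A^c)\le\rho$; hence $d_X(b,A^c)=\rho=\sup_{a\in A}d_X(a,A^c)$, which is exactly the defining condition for $b\in QCent_X(A)$. Conversely, any $a_0\in QCent_X(A)$ gives such a ball by the first step. Therefore the largest open balls of $X$ contained in $A$ are exactly the balls $B(a_0,Qrad_X(A))$ with $a_0\in QCent_X(A)$. The only points needing care are the strict-versus-nonstrict inequalities intrinsic to open balls and to the infimum defining $d_X(b,A^c)$, together with the use of the hypothesis $QCent_X(A)\neq\emptyset$ to legitimately replace $\sup_{a\in A}d_X(a,A^c)$ by $Qrad_X(A)$; I do not expect any deeper obstacle, as the remainder is a direct unfolding of the definitions.
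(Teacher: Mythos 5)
Your proof is correct and follows essentially the same route as the paper's: both arguments rest on the equivalence $B(b,s)\subseteq A \iff d_{X}(b,A^c)\geq s$, applied at quasi-center and non-quasi-center points, together with the infimum characterization of $d_{X}(\cdot,A^c)$ to rule out any larger radius. The only organizational difference is that you bypass the paper's preliminary split into boundary versus interior points (and its appeal to Lemma \ref{3.17}), which your direct estimates make unnecessary.
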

    
     \begin{proof}
    Any point $a \in A$ is either an interior point of A or a boundary of A. 
    If $a \in \partial_{X}(A)$ then  for every $\epsilon > 0$, the open ball $B(a,\epsilon)$
    intersects $A^{c}$. So, any ball centered at boundary point of A with positive radius can not be entirely contained in A. Thus if $A^\circ=\emptyset$ then $QCent_{X}(A)=A$ and the largest open balls contained in A are balls with zero radii.\par
    If $A^\circ\neq\emptyset$ then by Lemma \ref{3.17}, $QCent_{X}(A) \subseteq A^\circ.$ First, let $a \in A^\circ$ such that $a \notin QCent_{X}(A).$  As  $d_{X}(a,A^c)< Qrad_{X}(A),$ and $A$ is proper subset of $X$, then $\exists$ $b\in A^c$ such that $d_{X}(a,b)< Qrad_{X}(A).$ Thus  $B(a,Qrad_{X}(A)) \cap A^{c} \neq \emptyset.$ Therefore, any open ball centred at $a$ with radius $\geq Qrad_{X}(A)$ can not be entirely contained in A.
    Now, let $a \in QCent_{X}(A).$ Then $d_{X}(a,A^c) = Qrad_{X}(A).$ So, $B(a,Qrad_{X}(A))\subseteq A.$ Next, we observe that for $\epsilon>0$, open balls $B(a,Qrad_{X}(A)+\epsilon)$ has nonempty intersection with $A^{c}.$ As $\inf_{b\in A^c}d_{X}(a,b)=Qrad_{X}(A),$ we get that $\forall \epsilon>0$, $\exists$ $b'\in A^c$ such that $d_{X}(a,b')<Qrad_{X}(A)+\epsilon.$ So, $B(a,Qrad_{X}(A)+\epsilon)\cap A^c \neq \emptyset,$ and therefore any ball with radius greater than $Qrad_{X}(A)$ can not be entirely contained in $A.$ Hence, our claim.
\end{proof}

\begin{corollary}\label{3.22}
	Let $A$ and $B$ be proper subsets of a metric space $X$ such that $QCent_{X}(A)\neq\emptyset$ and $A\subseteq B.$ Then $Qrad_{X}(A)\leq Qrad_{X}(B)$.
\end{corollary}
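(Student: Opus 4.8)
The plan is to deduce Corollary~\ref{3.22} directly from Theorem~\ref{3.19} together with the monotonicity property of distance to subsets recorded in Section~2 (namely, if $A\subseteq B\subseteq X$ then $d_X(a,B)\le d_X(a,A)$ for every $a\in X$, applied to the complements). First I would fix a point $a\in QCent_X(A)$, which exists by hypothesis; then $Qrad_X(A)=d_X(a,A^c)$. Since $A\subseteq B$, we have $B^c\subseteq A^c$, and hence $d_X(a,B^c)\le$ \dots wait, I need the inequality in the useful direction: from $B^c\subseteq A^c$ the listed property gives $d_X(a,A^c)\le d_X(a,B^c)$. So $Qrad_X(A)=d_X(a,A^c)\le d_X(a,B^c)$.

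Next I would connect $d_X(a,B^c)$ to $Qrad_X(B)$. If $QCent_X(B)\neq\emptyset$, then by definition $Qrad_X(B)=\sup_{b\in B}d_X(b,B^c)\ge d_X(a,B^c)$ since $a\in A\subseteq B$; combining with the previous paragraph yields $Qrad_X(A)\le Qrad_X(B)$. If $QCent_X(B)=\emptyset$, then by the convention established just before Theorem~\ref{3.19} we have $Qrad_X(B)=\infty$, and the inequality is trivial. Either way the claim follows. Alternatively—and perhaps more in the spirit of the section—one can argue geometrically via Theorem~\ref{3.19}: the largest open ball contained in $A$ has radius $Qrad_X(A)$ and center in $QCent_X(A)$; such a ball $B(a,Qrad_X(A))\subseteq A\subseteq B$ is then an open ball of $X$ contained in $B$, so the largest open ball contained in $B$ has radius at least $Qrad_X(A)$, i.e. $Qrad_X(B)\ge Qrad_X(A)$.

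I do not anticipate a serious obstacle here; the only point requiring care is the bookkeeping around the case $QCent_X(B)=\emptyset$, where $Qrad_X(B)=\infty$ by convention while $SQrad_X(B)$ might be finite—so one must phrase the argument in terms of $Qrad_X$, not $SQrad_X$, to keep the inequality valid. A second minor subtlety is making sure the distance-monotonicity bullet from Section~2 is invoked with the correct containment ($B^c\subseteq A^c$ giving $d_X(a,A^c)\le d_X(a,B^c)$), since reversing it would give a false inequality. Both are routine once flagged. I would present the short distance-inequality proof as the main argument and perhaps remark on the geometric interpretation via Theorem~\ref{3.19} afterward.
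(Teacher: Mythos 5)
Your main argument is correct, but it takes a more elementary route than the paper. The paper proves the corollary geometrically: it splits into the cases $A^\circ=\emptyset$ (where Theorem \ref{3.18} gives $Qrad_X(A)=0$) and $A^\circ\neq\emptyset$, and in the latter case invokes Theorem \ref{3.19} to say that $B(a,Qrad_X(A))$ with $a\in QCent_X(A)$ is a largest open ball contained in $A$, hence an open ball contained in $B$, so the largest ball in $B$ has radius at least $Qrad_X(A)$ --- exactly the ``alternative'' you sketch at the end. Your primary argument instead works purely with the distance function: $Qrad_X(A)=d_X(a,A^c)\le d_X(a,B^c)\le \sup_{b\in B}d_X(b,B^c)=Qrad_X(B)$ when $QCent_X(B)\neq\emptyset$, with the empty-quasi-center case handled by the convention $Qrad_X(B)=\infty$. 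This buys you two things the paper's write-up glosses over: you need no case split on $A^\circ$, and you explicitly dispose of the possibility $QCent_X(B)=\emptyset$, which the paper's proof tacitly assumes away when it speaks of ``the largest ball contained in $B$'' centered at $b\in QCent_X(B)$ (before Theorem \ref{3.19} the paper notes that no largest ball exists in that case). The paper's version, in turn, ties the statement to the ball-theoretic machinery of the section, which is presumably why it is stated as a corollary of Theorem \ref{3.19}. Your care with the direction of the monotonicity inequality ($B^c\subseteq A^c$ gives $d_X(a,A^c)\le d_X(a,B^c)$) is correctly resolved.
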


\begin{proof}
	If $A^{\circ} = \emptyset$ then 
	the result follows from 
	Theorem \ref{3.18}. If $A^{\circ} \neq \emptyset$ then by Theorem \ref{3.19}, $B(a,Qrad_{X}(A)$ is the largest open ball contained in A, where $a\in QCent_{X}(A)$. As $B(a,Qrad_{X}(A))\subseteq A\subseteq B$ and $B(b,Qrad_{X}(B))$ is the largest ball contained in $B$, where $b\in QCent_{X}(B).$ Hence, our claim.
\end{proof}

But if $A \subseteq B$ are proper subsets of metric space $X$ then it does not imply that  $QCent_{X}(A) \subseteq QCent_{X}(B)$. For example: Take subsets  $A=\left[0,1\right]$ and $B=\left[0,2\right]$ of $\mathbb{R}$ with the usual metric. Then $QCent_{\mathbb{R}}(A)=\{\frac{1}{2}\}$ and $Qent_{\mathbb{R}}(B)=\{1\}$.

\begin{remark}\label{5.11}
    
 {\normalfont Note that if $A\subseteq B$ are  proper subsets of a path metric space $X$ such that $Cent_{X}(A)\neq\emptyset$, then by Corollary \ref{3.22}, $rad_{X}(A)\leq rad_{X}(B).$}
\end{remark}


\begin{remark}
	{\normalfont We can notice that the radius of a subset may not be equal to half of its diameter. Infact, it is easy to observe that if $A$ is a nonclopen subset of metric space $X$ such that $Cent_{X}(A)\neq\emptyset$, then $rad_{X}(A)\leq diam_{X}(A)$. By Theorem \ref{3.19}, we also observe that for a  proper subset $A$ of the Euclidean space $\mathbb{R}^n$, having nonempty center,  $rad_{\mathbb{R}^n}(A)\leq \frac{1}{2} diam_{\mathbb{R}^n}(A)$. 
	}
\end{remark}




For subsets $A$ and $B$ of a metric space $X$, we observe that $diam_{X}(A) \leq diam_{X}(B)$ does not imply $rad_{X}(A) \leq rad_{X}(B)$ or $Qrad_{X}(A) \leq Qrad_{X}(B)$.
\begin{example}
	{\normalfont    Consider $ A_{1}\;\text{and}\; A_{2} \subseteq \mathbb{R}^{2}$, where $A_{1}$ is the line segment joining  $(-2,0)$ and $(2,0)$ in $\mathbb{R}^{2}$ and $A_{2}$ is the closed ball centered at $(4,0)$ with radius 1.
   Notice that $diam_{\mathbb{R}^{2}}(A_{1}) =4 > 2 =diam_{\mathbb{R}^{2}}(A_{2})$ but $Qrad_{\mathbb{R}^{2}}(A_{1})=rad_{\mathbb{R}^{2}}(A_{1}) =0 <1 =rad_{\mathbb{R}^{2}}(A_{2})=Qrad_{\mathbb{R}^{2}}(A_{2})$.}
\end{example}
Next, we introduce a notion of concentric subsets.

\begin{definition}[Concentric Subsets]
	 Two subsets $A$ and $B$ of a metric space $X$ are called concentric subsets if they have the same nonempty centers in X.
\end{definition}

\begin{example}\label{3.20}
	{\normalfont    Take $A=\left[-2,2\right]$ and $B=\left[-1,1\right]$ in the set $\mathbb{R}$ of  real numbers with the usual metric. Here, $Cent_{\mathbb{R}}(A) =Cent_{\mathbb{R}}(B)= \{0\}$. So, A and B are concentric in $\mathbb{R}$. Infact,   all intervals of the form $[-n,n]$ or $(-m,m)$ are concentric in $\mathbb{R},$ where $m,n$ are positive real numbers.}
\end{example}

\begin{example}\label{3.21}
	{\normalfont    Let $\mathbb{R}^{2}$ be the real plane with the Euclidean metric. 
		Then the unit circle $\mathbb{S}^{1}$ and the punctured disc $A=\{(x,y) \in \mathbb{R}^{2} | x^{2}+y^{2}\leq 4\} \backslash \{(0,0)\}$ are concentric as $Cent_{\mathbb{R}^{2}}(\mathbb{S}^{1}) = Cent_{\mathbb{R}^{2}}(A) = \mathbb{S}^{1}$.}
\end{example}

\begin{remark}
	{\normalfont 
			Concentric subsets may not be contained in each other. For example: Let $A=\mathbb{D}^{2} \cup \{(2,0)\}$ and $B = \mathbb{D}^{2} \cup \{(0,2)\}$ be two subsets of the Euclidean plane  $\mathbb{R}^{2},$ where $\mathbb{D}^{2}$ is the unit disc in $\mathbb{R}^{2}$. Note that  $Cent_{\mathbb{R}^{2}}(A) = Cent_{\mathbb{R}^{2}}(B) = \{(0,0)\}$, but neither $A\subseteq B$ nor $B\subseteq A.$ Also, notice that if $A$ and $B$ are concentric  subsets with the same radius then $A$ may not be equal to $B$.}
	
\end{remark}
It is easy to see that the relation of being concentric subsets is an equivalence relation on the class of subsets of $X$ with nonempty centers.

\begin{theorem}\label{5.17}
     Let $A$ be a subset of a path metric space $X$ such that $A$ has nonempty interior and $Cent_{X}(A) \neq \emptyset$. Then $A$ and $A^\circ$ are concentric with same radii.
\end{theorem}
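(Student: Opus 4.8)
The plan is to show both inclusions $Cent_X(A) \subseteq Cent_X(A^\circ)$ and $Cent_X(A^\circ) \subseteq Cent_X(A)$, and then conclude equality of radii. Since $X$ is a path metric space, Remark \ref{5.5} tells us $Cent_X(A) = QCent_X(A)$, $rad_X(A) = Qrad_X(A)$, and likewise for $A^\circ$ (note $A^\circ$ is also a proper subset of $X$ whenever $A$ is, because $A$ is not clopen here: if $A^\circ = X$ then $A = X$ and $Cent_X(A) \neq \emptyset$ trivially but then $A$ is clopen, a borderline case I would handle separately or note $A=X$ gives $A^\circ = A$). So it suffices to work with quasi-centers and the characterization via largest open balls from Theorem \ref{3.19}.

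First I would observe that since $A$ has nonempty interior and $Cent_X(A) \neq \emptyset$, Theorem \ref{3.28} gives $rad_X(A) \leq rad_X(A^\circ)$; in particular $Cent_X(A^\circ)$ could be empty only if $rad_X(A^\circ) = \infty$, but I will show it equals the finite number $rad_X(A)$ (finite because $A$ is nonclopen with nonempty center, via Theorem \ref{3.6} — assuming the clopen/$A=X$ case is dispatched). The key geometric fact is: for $a \in Cent_X(A) = QCent_X(A)$, the ball $B(a, Qrad_X(A))$ is the largest open ball contained in $A$ (Theorem \ref{3.19}), and since this open ball is itself open and contained in $A$, it is contained in $A^\circ$. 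Hence $B(a, rad_X(A))$ is an open ball contained in $A^\circ$, which forces $Qrad_X(A^\circ) = rad_X(A^\circ) \geq rad_X(A)$. Combined with Theorem \ref{3.28}'s reverse inequality is not quite what I want directly; instead I would argue the other direction by noting $A^\circ \subseteq A$ and applying Corollary \ref{3.22}: since $QCent_X(A^\circ) \subseteq A^\circ$ and... — actually cleaner: $A^\circ \subseteq A$ are proper subsets and (once I know $QCent_X(A^\circ) \neq \emptyset$) Corollary \ref{3.22} gives $Qrad_X(A^\circ) \leq Qrad_X(A)$, i.e. $rad_X(A^\circ) \leq rad_X(A)$. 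Together with Theorem \ref{3.28} this yields $rad_X(A) = rad_X(A^\circ)$.

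For the centers, take $a \in Cent_X(A)$. Then $d_X(a, \partial_X(A)) = rad_X(A) = rad_X(A^\circ)$, and since $\partial_X(A^\circ) \subseteq \partial_X(A)$ we get $d_X(a, \partial_X(A^\circ)) \geq d_X(a, \partial_X(A)) = rad_X(A^\circ)$; as $a \in A^\circ$ by Lemma \ref{B}, and $rad_X(A^\circ)$ is the maximum of $d_X(\cdot, \partial_X(A^\circ))$ over $A^\circ$, we conclude $d_X(a,\partial_X(A^\circ)) = rad_X(A^\circ)$, so $a \in Cent_X(A^\circ)$. Conversely, take $b \in Cent_X(A^\circ)$. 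Using the path-metric identity $d_X(b, \partial_X(A^\circ)) = d_X(b, (A^\circ)^c) \leq d_X(b, A^c) = d_X(b, \partial_X(A))$ (since $A^c \subseteq (A^\circ)^c$), we get $rad_X(A^\circ) = d_X(b,\partial_X(A^\circ)) \leq d_X(b, \partial_X(A)) \leq rad_X(A) = rad_X(A^\circ)$, forcing $d_X(b, \partial_X(A)) = rad_X(A)$ and $b \in Cent_X(A)$. This gives $Cent_X(A) = Cent_X(A^\circ)$, hence $A$ and $A^\circ$ are concentric with the same radius.

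The main obstacle I anticipate is bookkeeping around the degenerate cases — ensuring $A$ is genuinely a proper, nonclopen subset so that Theorems \ref{3.6}, \ref{3.19} and Corollary \ref{3.22} all apply (in particular that $rad_X(A)$ is finite and $QCent_X(A^\circ)$ is nonempty before invoking Corollary \ref{3.22}), and correctly using that an open subset of $A$ lies in $A^\circ$. The path-metric hypothesis is doing real work: it is what lets me translate freely between distance-to-boundary and distance-to-complement, which is the crux of matching up $Cent_X(A)$ with $Cent_X(A^\circ)$ rather than merely comparing radii.
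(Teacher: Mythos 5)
Your proposal is correct and takes essentially the same route as the paper's proof: Theorem \ref{3.28} plus Corollary \ref{3.22} (the paper cites its path-metric form, Remark \ref{5.11}) for the equality of radii, Lemma \ref{B} together with $\partial_{X}(A^\circ)\subseteq\partial_{X}(A)$ for the inclusion $Cent_{X}(A)\subseteq Cent_{X}(A^\circ)$, and the path-metric identification of centers with quasi-centers for the reverse inclusion, which the paper merely phrases contrapositively. The circular dependency you flag (Corollary \ref{3.22} requires $QCent_{X}(A^\circ)\neq\emptyset$, which you only obtain later) is present in the paper's own proof as well and is harmless: one can first prove $Cent_{X}(A)\subseteq Cent_{X}(A^\circ)$ directly from the path-metric identity $d_{X}(y,\partial_{X}(A^\circ))=d_{X}(y,(A^\circ)^c)\leq d_{X}(y,A^c)=d_{X}(y,\partial_{X}(A))\leq rad_{X}(A)$ for $y\in A^\circ$, which makes $Cent_{X}(A^\circ)$ nonempty before the radius comparison is invoked.
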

    
\begin{proof}
    By Theorem \ref{3.28} and Remark \ref{5.11}, we get $rad_{X}(A)= rad_{X}(A^\circ).$
    Let $x\in Cent_{X}(A).$ Then by Lemma \ref{B}, $x\in A^\circ.$ We have $rad_{X}(A)= rad_{X}(A^\circ)\geq d_{X}(x,\partial_{X}(A^\circ))\geq d_{X}(x,\partial_{X}(A))=rad_{X}(A).$ Thus $d_{X}(x,\partial_{X}(A^\circ))=rad_{X}(A^\circ).$ So, $ x\in Cent_{X}(A^\circ).$ Therefore, $Cent_{X}(A)\subseteq\ Cent_{X}(A^\circ)$. Now, let $x \notin Cent_{X}(A)=QCent_{X}(A).$ So, $ d_{X}(x,A^c)<rad_{X}(A)= Qrad_{X}(A).$ So, $B(x,rad_X(A))\cap A^c\neq\emptyset$ $\implies B(x,rad_X(A^\circ))\cap A^c\neq\emptyset$. This implies that $B(x,rad_X(A^\circ))\cap (A^\circ)^c\neq\emptyset. $ Thus $x\notin QCent_{X}(A^\circ) $ $= Cent_{X}(A^\circ).$ Therefore, $Cent_{X}(A^\circ)\subseteq Cent_{X}(A).$ Hence,  $Cent_{X}(A^\circ)= Cent_{X}(A).$
\end{proof}

\begin{remark}
      \normalfont  
      Theorem \ref{5.17} may not be true if $A^\circ= \emptyset$. For example: Consider the set $\mathbb{R}$ of real numbers with the usual metric. If $A=\{\frac{1}{n} |n \in \mathbb{N}\}$ $\subseteq \mathbb{R},$  
      then $A^\circ = \emptyset$. We have $Cent_{X}(A^\circ)= \emptyset$ $\&$ $Cent_{X}(A)= A$  
      and $rad_{X}(A^\circ) = \infty$ $\&$ $rad_{X}(A) = 0$. Notice that $rad_{X}(A^\circ)\neq rad_{X}(A),$ 
      and $ Cent_{X}(A) \neq Cent_{X}(A^\circ)$.
\end{remark}

If $X$ is a path metric space in Theorem \ref{3.37}, then we have the following result.

\begin{corollary}\label{3.38}
		Let $(X,d_X)$ be a path metric space. For nonempty proper subsets $A_i \subseteq X, 1\leq i\leq n,$ such that $A_i$ and $A_j$ are separated,
		 for all $i\neq j$ and $n\in \mathbb{N},$ and let
	 $M$ be the collection of all those $A_j$ such that $rad_{X}(A_{j})= \max\{rad_{X}(A_{i})|1\leq i \leq n\}$ and $Cent_{X}(A_j)\neq\emptyset$. Then, if  
	    $M\neq\emptyset$,
		then $Cent_{X}(\bigcup\limits_{1}^{n}A_{i}) = \bigcup\limits_{A_{j}\in M}Cent_{X}(A_{j})$ $\&$ $rad_{X}(\bigcup\limits_{1}^{n}A_{i}) = \max\{rad_{X}(A_{i})|1\leq i \leq n\}.$ 
		
\end{corollary}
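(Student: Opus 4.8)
The plan is to derive Corollary~\ref{3.38} directly from Theorem~\ref{3.37} by exploiting the defining property of a path metric space. The key observation is that when $X$ is a path metric space, one of the bullet points in Section~2 gives $d_{X}(a,\partial_{X}(A))=d_{X}(a,A^{c})$ for every proper subset $A$ and every $a\in A$; consequently $Cent_{X}(A)=QCent_{X}(A)$ and $rad_{X}(A)=Qrad_{X}(A)$ for every proper subset $A$ of $X$ (this is exactly Remark~\ref{5.5}). The first step, then, is to observe that each $A_{i}$ is a proper subset (this is part of the hypothesis), so the path-metric identity applies to each of them and, crucially, to each pairwise union $A_{i}\cup A_{j}$ and to the full union $\bigcup_{1}^{n}A_{i}$, provided these are proper subsets of $X$.

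Next I would show that the sets $\Tilde{A}_{j}$ appearing in Theorem~\ref{3.37} become \emph{empty} in the path-metric setting, so that the hypotheses ``$Cent_{X}(A_{j})\setminus\Tilde{A}_{j}\neq\emptyset$'' and the conclusion ``$Cent_{X}(\bigcup A_{i})=\bigcup_{A_{j}\in M}(Cent_{X}(A_{j})\setminus\Tilde{A}_{j})$'' simplify to the statements in the corollary. To see $\Tilde{A}_{j}=\emptyset$: suppose $a\in Cent_{X}(A_{j})$, so $d_{X}(a,\partial_{X}(A_{j}))=rad_{X}(A_{j})$. Since $A_{j}$ is path-metric-proper, $d_{X}(a,A_{j}^{c})=rad_{X}(A_{j})$, and because the $A_{i}$ are pairwise separated we have $\partial_{X}(A_{i})\subseteq \overline{A_{i}}\subseteq A_{j}^{c}$ for $i\neq j$ (separation of $A_i$ and $A_j$ forces $\overline{A_i}\cap A_j=\emptyset$), whence $d_{X}(a,\partial_{X}(A_{i}))\geq d_{X}(a,A_{j}^{c})=rad_{X}(A_{j})$. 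Thus the defining inequality $d_{X}(a,\partial_{X}(A_{i}))<rad_{X}(A_{j})$ for the membership $a\in\Tilde{A}_{j}$ can never hold, so $\Tilde{A}_{j}=\emptyset$. With $\Tilde{A}_{j}=\emptyset$, the set $M$ of Theorem~\ref{3.37} (those $A_{j}$ with maximal radius and $Cent_{X}(A_{j})\setminus\Tilde{A}_{j}\neq\emptyset$) coincides with the set $M$ of the corollary (those $A_{j}$ with maximal radius and $Cent_{X}(A_{j})\neq\emptyset$), and $Cent_{X}(A_{j})\setminus\Tilde{A}_{j}=Cent_{X}(A_{j})$.

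Finally I would invoke Theorem~\ref{3.37} verbatim: its conclusion, under the hypothesis $\bigcup_{A_{j}\in M}(Cent_{X}(A_{j})\setminus\Tilde{A}_{j})\neq\emptyset$ — which is precisely $M\neq\emptyset$ here, since $A_{j}\in M$ already requires $Cent_{X}(A_{j})\neq\emptyset$ — gives $Cent_{X}(\bigcup_{1}^{n}A_{i})=\bigcup_{A_{j}\in M}(Cent_{X}(A_{j})\setminus\Tilde{A}_{j})=\bigcup_{A_{j}\in M}Cent_{X}(A_{j})$ and $rad_{X}(\bigcup_{1}^{n}A_{i})=\max\{rad_{X}(A_{i})\mid 1\leq i\leq n\}$, as claimed. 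I would also note that when $M\neq\emptyset$, the union $\bigcup_{1}^{n}A_{i}$ has nonempty center, hence finite radius by Theorem~\ref{3.6} if it is nonclopen, and in particular is a proper subset of $X$, so the path-metric substitutions used above are legitimate.

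The main obstacle I anticipate is the bookkeeping around \emph{properness} and the boundary-of-union identity. Theorem~\ref{3.37} is stated for nonclopen subsets, while the corollary hypothesizes merely nonempty proper subsets; I would need to check that ``nonempty proper'' together with ``pairwise separated'' and ``$M\neq\emptyset$'' is enough to run the induction in Theorem~\ref{3.37} (each $A_i$ with $Cent_X(A_i)\neq\emptyset$ and finite radius is automatically nonclopen by Lemma~\ref{2.8}, and the ones with $Cent_X(A_i)=\emptyset$ contribute nothing to $M$ and can be absorbed as in the proof of Theorem~\ref{3.37}). The second delicate point is verifying $\partial_{X}(A_{i})\subseteq A_{j}^{c}$ from separation — this is where the separated hypothesis is genuinely used — and confirming that the pairwise-union path-metric identity propagates through the induction so that every intermediate $B=\bigcup_{1}^{k}A_{i}$ is itself a proper subset on which $Cent_{X}=QCent_{X}$; I expect this to be routine but it is the step most likely to hide a subtlety if some $A_i$ happened to be dense or its complement had empty interior.
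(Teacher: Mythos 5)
Your proposal is correct and follows essentially the same route as the paper: both proofs use the separation hypothesis to get $\partial_{X}(A_i)\subseteq A_j^{c}$, invoke the path-metric identity $d_{X}(a,\partial_{X}(A_j))=d_{X}(a,A_j^{c})$ (Remark \ref{5.5}) to conclude $d_{X}(a,\partial_{X}(A_i))\geq rad_{X}(A_j)$ for $a\in Cent_{X}(A_j)$, deduce $\Tilde{A}_j=\emptyset$, and then apply Theorem \ref{3.37}. Your additional remarks on reconciling ``nonempty proper'' with the ``nonclopen'' hypothesis of Theorem \ref{3.37} address a point the paper passes over silently, but they do not change the argument.
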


\begin{proof}
	We observe that $\Tilde{A_j}=\emptyset,\, \forall\, j,$ where $\Tilde{A_j}$ is the same as defined in Theorem \ref{3.37}. As $A_i$ and $A_j$ are separarted for all 
	$ i\neq j,$  $\partial_{X}(A_i)\subseteq (A_j)^c.$ So, we get $d_{X}(a,(A_j)^c)\leq d_{X}(a,\partial_{X}(A_i)), \forall \,a\in A_j.$ As $X$ is path metric space, by Remark \ref{5.5}, we get $rad_{X}(A_j)=Qrad_{X}(A_j)\leq d_{X}(a,\partial_{X}(A_i)),\forall\,a\in Cent_{X}(A_j).$ This implies that $\Tilde{A_j}=\emptyset,\forall\,j.$ Now, the result follows from Theorem \ref{3.37}.
\end{proof}

\begin{remark}
	{\normalfont If $X$ is a Euclidean space then the center of a disconnected proper subset of $X$  is equal to the union of centers of its connected components with the maximum radius and its radius is equal to the radius of component with the maximum radius.}
\end{remark}

\begin{example}
	{\normalfont  Let $A= [0,1], B= [2,6]$ and $C= [8,12]$ be subsets of $\mathbb{R}$ with the usual metric. Here $rad_{\mathbb{R}}(B)=rad_{\mathbb{R}}(C)= 2 > 0.5 =rad_{\mathbb{R}}(A)$. So, by corollary \ref{3.38}, $Cent_{\mathbb{R}}(A \cup B\cup C) = Cent_{\mathbb{R}}(B)\cup Cent_{\mathbb{R}}(C) = \{4,10\}$ and $rad_{\mathbb{R}}(A \cup B\cup C)= rad_{\mathbb{R}}(B)= 2.$}
\end{example}


 
\section{\textbf{Conjecture}}
Let $A$ be a nonclopen subset of metric space $X$. Consider a map $p:A\longrightarrow \mathbb{R}$ such that $p(a)=d_{X}(a,\partial_{X}(A))$. Then the sublevel sets $P_{\alpha}=p^{-1}(-\infty,\alpha]$ for $\alpha\geq 0$ form a 1-parameter family of nested subsets, $P_\alpha\subseteq P_\beta$ whenever $\alpha\leq \beta$. This gives a filtration $\partial_{X}(A)\cap A= P_{\alpha_0}\subseteq P_{\alpha_1}\subseteq P_{\alpha_2}\subseteq...P_{rad_{X}(A)}=A$, where $0=\alpha_0\leq \alpha_1\leq\alpha_2\leq...\leq rad_{X}(A).$
This family of sublevel sets describes how a set $A$ evolves from the set $\partial_{X}(A)\cap A$ as the threshold increases.
Notice that $Cent_{X}(A)\cap P_\alpha=\emptyset$ for every $\alpha<rad_{X}(A)$.\\
Let $A$ be a proper subset of the Euclidean space $\mathbb{R}^n$ such that $A$ has positive radius. Then for every $\delta>0$ and $\alpha=rad_{X}(A)-\delta$, we have $Cent_{X}(A)\cap P_\alpha=\emptyset$ and for every connected component $K$ of $Cent_{X}(A),$ the ball $B_\delta(K)=\{b\in A|d_{X}(b,K)<\delta\}\cap P_\alpha=\emptyset$, which create an $(n-1)$-dimensional cavity around $K$ in $P_\alpha$. 
As $\delta \rightarrow 0$ all these cavities around the connected components of $Cent_{X}(A)$ will vanish.\par 
Above observation leads us to a conjecture that describes a relation between the rank $\beta_{n-1}(A)$ of $(n-1)$-dimensional homology group of $A$ and the rank  $\beta_{0}(Cent_{X}(A))$ of 0-th homology group of $Cent_{X}(A)$.
\begin{conjecture}
	Let $A$ be a proper subset of the Euclidean space $\mathbb{R}^n$ with positive radius. Then there exists a sublevel set $P_\alpha$ with $\alpha<rad_{X}(A),$ 
	such that $\beta_{n-1}(P_\alpha)=\beta_{n-1}(A)+\beta_{0}(Cent_{X}(A))$.
\end{conjecture}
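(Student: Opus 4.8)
The plan is to track how the topology of the sublevel sets $P_\alpha = p^{-1}([0,\alpha])$ changes as $\alpha$ increases toward $rad_X(A)$, and to isolate the last critical value $\alpha^\ast<rad_X(A)$ at which the $(n-1)$-st Betti number stabilizes. First I would show that for $\alpha$ sufficiently close to $rad_X(A)$ the inclusion $P_\alpha\hookrightarrow A$ induces an isomorphism on homology in degrees $\le n-2$ and that the only discrepancy lives in degree $n-1$: this should follow from the fact that $A\setminus P_\alpha$ is a neighborhood of $Cent_X(A)$ that deformation retracts onto $Cent_X(A)$ for small $\delta=rad_X(A)-\alpha$ (using that $Cent_X(A)$ is closed in $A$, as proved earlier, and that $p$ is continuous, so $p^{-1}((\alpha,rad_X(A)])$ is an open collar-like set). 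Then I would feed this into the Mayer--Vietoris or the long exact sequence of the pair $(A,P_\alpha)$: writing $A = P_\alpha \cup U$ where $U = \{b\in A : d_X(b,Cent_X(A))<\delta\}$ and $P_\alpha\cap U$ is the "spherical shell" around $Cent_X(A)$, the key input is that each connected component $K$ of $Cent_X(A)$ contributes one $(n-1)$-cycle to $P_\alpha$ (the small $(n-1)$-sphere, or rather $(n-1)$-boundary of the $\delta$-ball $B_\delta(K)$, that is filled in $A$ but not in $P_\alpha$) while contributing nothing in other degrees because $\mathbb{R}^n$ is contractible and the $\delta$-ball is an $n$-cell.

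The central computation would then be the exact sequence
\begin{equation*}
H_n(A)\longrightarrow H_n(A,P_\alpha)\longrightarrow H_{n-1}(P_\alpha)\longrightarrow H_{n-1}(A)\longrightarrow H_{n-1}(A,P_\alpha)\longrightarrow H_{n-2}(P_\alpha)\longrightarrow H_{n-2}(A),
\end{equation*}
where by excision $H_\ast(A,P_\alpha)\cong H_\ast(U,U\cap P_\alpha)$, and $U$ deformation retracts to $Cent_X(A)$ while $U\cap P_\alpha$ deformation retracts to the total space of a "sphere bundle" over $Cent_X(A)$ whose fibers are $(n-1)$-spheres. The Thom-isomorphism-type shift then gives $H_n(U,U\cap P_\alpha)\cong H_0(Cent_X(A))$ (with appropriate coefficients), which has rank $\beta_0(Cent_X(A))$, and $H_j(U,U\cap P_\alpha)=0$ for $j\ne n$. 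Plugging in, the tail forces $H_{n-2}(P_\alpha)\cong H_{n-2}(A)$, the map $H_{n-1}(P_\alpha)\to H_{n-1}(A)$ is surjective with kernel of rank $\beta_0(Cent_X(A))$, whence $\beta_{n-1}(P_\alpha)=\beta_{n-1}(A)+\beta_0(Cent_X(A))$, exactly as claimed; one then takes $\alpha^\ast$ to be any value in $(\alpha_0,rad_X(A))$ past the largest critical value of $p$ strictly below $rad_X(A)$, so that $P_{\alpha^\ast}$ has stabilized to this common value.

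The hard part will be justifying the deformation retractions and the "sphere bundle" claim in the generality of an arbitrary proper subset $A\subseteq\mathbb{R}^n$ — $A$ need not be a manifold, a CW complex, or even locally nice, so there is no a priori Thom isomorphism and no a priori tubular neighborhood. I would handle this by exploiting the very definition of $rad_X(A)$: near $Cent_X(A)$ the function $d_X(\cdot,A^c)=d_X(\cdot,\partial_X(A))$ (equal by the path-metric property of $\mathbb{R}^n$) attains its strict maximum, so for small $\delta$ the set $U$ is genuinely an open ball of radius $\delta$ around each component (since $B(k,rad_X(A))\subseteq A$ for $k\in Cent_X(A)$ by Theorem~\ref{3.19}), and the straight-line homotopy in $\mathbb{R}^n$ toward $Cent_X(A)$ stays inside $A$; the shell $U\cap P_\alpha$ is then literally an annular region $\{b: \delta' \le d_X(b,Cent_X(A))<\delta\}$ for suitable $\delta'$, which radially retracts to a sphere of radius, say, $3\delta/4$. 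The remaining subtlety — that distinct components of $Cent_X(A)$ have disjoint $\delta$-neighborhoods and that $Cent_X(A)$ itself is compact so that only finitely many components carry nontrivial $\beta_0$, or else that the statement must be read with possibly infinite but well-defined ranks — would need a short separate argument using Theorem~\ref{3.28}, Theorem~\ref{5.17}, and compactness of $\overline{A}$ when $A$ is bounded (and a reduction to the bounded case otherwise, since $rad_X(A)<\infty$ forces $Cent_X(A)$ to lie in a bounded region).
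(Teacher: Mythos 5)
The statement you are trying to prove is stated in the paper only as a conjecture: the authors offer no proof, just the informal ``cavity'' heuristic in Section 7 (each component of $Cent_X(A)$ should punch an $(n-1)$-dimensional hole in $P_\alpha$ for $\alpha$ just below $rad_X(A)$). Your proposal is essentially a formalization attempt of that same heuristic via the long exact sequence of the pair and a Thom-type isomorphism, and you have correctly identified where the difficulty lies --- but those steps are not technical loose ends, they are the entire content of the conjecture, and at least one of them provably fails for admissible $A$. The first casualty is the decomposition $A=P_\alpha\cup U$ with $U$ a small neighborhood of $Cent_X(A)$: this requires $\{a\in A: p(a)>\alpha\}\subseteq U$, i.e.\ that the superlevel sets of $p$ concentrate near the center, which is false for unbounded $A$. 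Take $A\subseteq\mathbb{R}^2$ to be the union of open discs $B((5k,0),r_k)$ with $r_0=1$ and $r_k=1-\tfrac{1}{k+1}\uparrow 1$, joined by thin corridors along the $x$-axis. Then $A$ is a proper, simply connected subset with $rad_{\mathbb{R}^2}(A)=1$ and $Cent_{\mathbb{R}^2}(A)=\{(0,0)\}$, yet for every $\alpha<1$ the set $\{p>\alpha\}$ contains a nonempty region compactly contained in infinitely many of the discs, so $\beta_1(P_\alpha)$ is infinite while $\beta_1(A)+\beta_0(Cent_{\mathbb{R}^2}(A))=1$. This is not merely a gap in your argument; it shows the conjecture as literally stated cannot be proved without an additional hypothesis such as boundedness of $A$ (for bounded $A$, continuity of $p$ and compactness do give $\{p>\alpha\}\subseteq U$ for $\alpha$ near $rad_X(A)$, rescuing your first step).

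Even in the bounded case the remaining steps do not go through at the stated level of generality. $Cent_X(A)$ is only known to be a closed subset of $A^\circ$; it can be a circle, a Cantor set, or worse, so there is no tubular neighborhood, no disc-bundle structure on $U$, no deformation retraction of $U$ onto $Cent_X(A)$, and no Thom isomorphism (which would in any case require orientability to work over $\mathbb{Z}$). Your count of ``one $(n-1)$-sphere per component'' is already wrong in the paper's own Example of the punctured disc, where $Cent_{\mathbb{R}^2}(A)$ is a circle and the frontier of its $\delta$-neighborhood consists of two circles, not one sphere; the correct contribution of rank one arises there from the kernel of $H_1(\text{shell})\to H_1(U)$, not from a single spherical cycle, and establishing that this always yields exactly $\beta_0(Cent_X(A))$ is precisely what has to be proved. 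Finally, superlevel sets $\{p>\alpha\}$ are erosions of $A$ and can have infinitely generated homology for every $\alpha$ below the maximum even when they do concentrate near a topologically trivial center, so the existence of a ``last critical value'' $\alpha^\ast$ past which $\beta_{n-1}(P_\alpha)$ stabilizes also requires justification. In short: your outline reproduces the authors' heuristic and is a reasonable plan of attack under strong additional regularity hypotheses (say, $A$ a compact manifold with boundary and $Cent_X(A)$ a submanifold with trivial normal bundle), but as written it does not prove the conjecture, and the conjecture itself needs to be amended before any proof can succeed.
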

From the above observation we can also conclude that at least $\beta_{0}(Cent_{X}(A))$ number of classes disappear or die in the $(n-1)$-th persistence diagram or $(n-1)$ dimensional barcode \cite{Edelsbrunner} of $A$ made with above filtration, when $\alpha=rad_{X}(A)$.\\

\hspace{50mm} \textbf{Acknowledgement}

\vspace{2mm}
  We would like to thank Omer Cantor from the Department of Mathematics, University of Haifa, Israel,  for their valuable comments and suggestions which have helped
us to improve the original version of the paper considerably.

\bibliographystyle{plain}

\end{document}